\documentclass[12pt]{amsart}
\usepackage{microtype}
\overfullrule = 10cm    
\usepackage{etex} 
\usepackage[active]{srcltx}
\usepackage{calc,amssymb,amsthm,amsmath,amscd, eucal,ulem}
\usepackage{alltt}
\usepackage{mathtools}
\usepackage{verbatim}
\usepackage{mathtools}
\usepackage{enumerate}
\synctex=1
\usepackage{bbding}
\RequirePackage[dvipsnames,usenames]{color}

\normalem
\input{xy}
\xyoption{all}
\usepackage{amsfonts, mathrsfs}
\usepackage[cal=cm, calscaled=1.05]{mathalfa}
\usepackage{dcpic, pictexwd}
\usepackage[left=1in,top=1in,right=1in,bottom=1in]{geometry}
\usepackage{bm}
\usepackage{verbatim}
\usepackage{upgreek}
\usepackage{systeme}

\usepackage[all]{xy}
\SelectTips{cm}{10}
\usepackage[pdftex]{hyperref}

\DeclareMathOperator{\Hom}{Hom}

\DeclareMathOperator{\id}{id}
\DeclareMathOperator{\coker}{coker}
\DeclareMathOperator{\colim}{colim}
\DeclareMathOperator{\End}{End}

\DeclareMathOperator{\Spec}{Spec}

\DeclareMathOperator{\fpt}{fpt}
\DeclareMathOperator{\lct}{lct}

\newcommand{\eps}{\varepsilon}

\title{The associated graded module of the test module filtration}

\begin{document}

\swapnumbers
\theoremstyle{plain}
\newtheorem{Le}{Lemma}[section]
\newtheorem{Ko}[Le]{Corollary}
\newtheorem{Theo}[Le]{Theorem}
\newtheorem*{TheoB}{Theorem}
\newtheorem{Prop}[Le]{Proposition}
\newtheorem*{PropB}{Proposition}
\newtheorem{Con}[Le]{Conjecture}
\newtheorem{claim}[Le]{Claim}
\newtheorem{Assu}[Le]{Assumption}
\theoremstyle{definition}
\newtheorem{Def}[Le]{Definition}
\newtheorem*{DefB}{Definition}
\newtheorem{Bem}[Le]{Remark}
\newtheorem{Bsp}[Le]{Example}
\newtheorem*{BspB}{Example}
\newtheorem{Be}[Le]{Observation}
\newtheorem{Sit}[Le]{Situation}
\newtheorem{Que}[Le]{Question}
\newtheorem{Dis}[Le]{Discussion}
\newtheorem{Prob}[Le]{Problem}
\newtheorem{Konv}[Le]{Convention}
\newtheorem{Nota}[Le]{Notation}

\author{Axel St\"abler}

\address{Axel St\"abler\\
Johannes Gutenberg-Universit\"at Mainz\\ Fachbereich 08\\
Staudingerweg 9\\
55099 Mainz\\
Germany}

\email{staebler@uni-mainz.de}

\date{\today}

\subjclass[2010]{Primary 13A35; Secondary 14F10, 14B05}

\begin{abstract}
We show that each direct summand of the associated graded module of the test module filtration $\tau(M, f^\lambda)_{\lambda \geq 0}$ admits a natural Cartier structure. If $\lambda$ is an $F$-jumping number, then this Cartier structure is nilpotent on $\tau(M, f^{\lambda -\eps})/\tau(M, f^\lambda)$ if and only if the denominator of $\lambda$ is divisible by $p$. We also show that these Cartier structures coincide with certain Cartier structures that are obtained by considering certain $\mathcal{D}$-modules associated to $M$ that were used to construct Bernstein-Sato polynomials.

Moreover, we point out that the zeros of the Bernstein-Sato polynomial $b_{M,f}$ attached to an \emph{$F$-regular} Cartier module correspond to its $F$-jumping numbers. This generalizes \cite[Theorem 5.4]{blicklestaeblerbernsteinsatocartier} where a stronger version of $F$-regularity was used. Finally, we develop a basic theory of \emph{non-$F$-pure modules} and prove a weaker connection between Bernstein-Sato polynomials $b_{M,f}$ and Cartier modules $(M, \kappa)$ for which $M_f$ is $F$-regular and certain jumping numbers attached to $M$.
\end{abstract}

\maketitle

\section*{Introduction}
For the purpose of this introduction let us consider a hypersurface $f$ inside a polynomial ring $R = \mathbb{F}_p[x_1, \ldots, x_n]$. We will often only state special cases of our main result and refer the reader to the paper for sharper statements. A classical theme is the study of singularities of $f$ via the Frobenius morphism $F$. Note that $F_\ast R$ is a free $R$-module with basis given by the monomials $x_1^{i_1} \cdots x_n^{i_n}$ with $0 \leq i_j \leq p-1$. We define an $R$-linear map $\kappa\colon F_\ast R \to R$ by sending the basis element $x_1^{p-1} \cdots x_n^{p-1}$ to $1$ and all other basis elements to zero. Given any rational $\lambda \geq 0$ one defines the test ideal \[\tau(R, f^\lambda) = \kappa^e (R \cdot f^{\lceil \lambda p^e\rceil})\] for any $e \gg 0$. In particular, one easily sees that $\tau(R, f^0) = R$. Varying $\lambda$ we obtain a non-increasing right-continuous filtration of ideals. The smallest $\lambda$ for which $\tau(R, f^\lambda) \neq R$ is called the $F$-pure threshold, $\fpt(f)$ for short. Similarly, we call any $\lambda$ such that $\tau(R, f^\lambda) \neq \tau(R, f^{\lambda -\eps})$ for all $0 < \eps \ll 1$ an $F$-jumping number.

If $f \in \mathbb{Q}[x_1,\ldots, x_n]$ and one considers the various reductions $f_p$ of $f$ to positive prime characteristic, then $\fpt(f_p) \xrightarrow{p \to \infty} \lct(f)$ and $\fpt(f_p) \leq \lct(f)$ for almost all $p$ (\cite[Theorem 6.8]{harayoshidagentightclosuremultideals}). Here $\lct(f)$ is the so-called log-canonical threshold which is a similar characteristic zero invariant that is defined using an embedded resolution of singularities. It is a deep and presently wide open conjecture that $\fpt(f_p) = \lct(f)$ for infinitely many $p$ (\cite[Conjecture 3.6]{mustatatakagiwatanabefthresholdsbernsteinsato}).

It has been observed for quasi-homogeneous hypersurfaces that if the log canonical threshold does not coincide with the $F$-pure threshold, then the denominator of the $F$-pure threshold is a $p$th power (see \cite[Theorem 3.5]{hbwzfthresholdshomogeneouspoly} or \cite[Lemma 3.7 (2)]{muellerfptquasihomo}). On the other hand, there are only two known (families of) examples where the $F$-pure threshold does not coincide with the log canonical threshold but the denominator of the $F$-pure threshold is not divisible by $p$ (see \cite[Example 4.5]{mustatatakagiwatanabefthresholdsbernsteinsato} and \cite[Proposition 2.7, Corollary 2.10]{cantonhswbehaviorofsingsatfpt}).

From the point of view of birational geometry $F$-jumping numbers with a denominator divisible by $p$ are special in the sense that the correspondence between certain Cartier linear maps and certain $\mathbb{Q}$-divisors breaks down in this case (e.g.\ \cite{schwedefadjunction}). This correspondence is central to many applications of test ideals in birational geometry.

\subsection*{Nilpotent Cartier structures}
One goal of this article is to further illustrate that $F$-pure thresholds whose denominators are divisible by $p$ are special in the following more general context: A finitely generated $R$-module $M$ endowed with an $R$-linear map $\kappa\colon F_\ast M \to M$ is called a Cartier module (cf.\ \cite{blicklep-etestideale}). One can associate to $M$ and $f \in R$ a test module filtration $\tau(M, f^\lambda)_{\lambda \geq 0}$ that has similar properties as in the case $M = R$. In particular, we can form the associated graded module $\bigoplus_{\lambda > 0} Gr^\lambda M =  \bigoplus_{\lambda > 0} \tau(M, f^{\lambda-\eps})/\tau(M, f^\lambda)$. In Section \ref{SectionNilpotenceAssGraded} we will attach a natural Cartier module structure to each direct summand and show that the Cartier module is \emph{nilpotent} if and only if the denominator of $\lambda$ is divisible by $p$. Here nilpotent means that some power of the structural map $\kappa$ acts as zero on the module. 

This notion of nilpotence is interesting for the following reason. Nilpotent Cartier modules form a Serre subcategory so that we may consider the attached localized category of Cartier crystals. Roughly speaking, this means killing all nilpotent Cartier modules and working up to nilpotence. This category is then equivalent (\cite{blickleboecklecartierfiniteness}) to the category of unit $R[F]$-modules of Emerton and Kisin (\cite{emertonkisinrhunitfcrys}) and anti-equivalent to the category of perverse constructible $\mathbb{F}_p$-sheaves on the \'etale site associated to $\Spec R$.

The first main result that we obtain is

\begin{TheoB}[Theorem \ref{TheoNilpotentGr}]
Let $R$ be essentially of finite type over an $F$-finite field, $(M, \kappa)$ a Cartier module, $f \in R$ and $\lambda$ an $F$-jumping number of the test module filtration of $M$ along $f$. Then for an integer $a$ the map $\kappa^e f^{a}$ defines a Cartier structure on $Gr^\lambda M$ if and only if $a \geq \lceil \lambda (p^e -1) \rceil$. Moreover, in this case the Cartier structure is not nilpotent if and only if $a = \lambda(p^e-1)$ and this quantity is an integer. In particular, if the denominator of $\lambda$ is divisible by $p$, then all these Cartier structures are nilpotent.
\end{TheoB}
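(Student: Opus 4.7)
The plan is to first identify what $a_e(\lambda)$ is forced to be. If $\kappa^e f^{a_e(\lambda)}$ descends to a $p^{-e}$-linear endomorphism of $Gr^\lambda M$, it must preserve both $\tau(M, f^{\lambda-\eps})$ and $\tau(M, f^\lambda)$. Using the basic compatibility of the Cartier action with the test module filtration---roughly, $\kappa^e(f^{a} F^e_\ast \tau(M, f^\mu)) \subseteq \tau(M, f^{(\mu+a)/p^e})$ interpreted through right continuity---one verifies that the minimal exponent with this stabilization property is $a_e(\lambda) = \lceil \lambda(p^e - 1) \rceil$. The theorem then asserts that nilpotence fails precisely when this ceiling is attained with equality, i.e.\ when $\lambda(p^e - 1)$ is already an integer.

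For the easy direction, suppose $\lambda(p^e - 1) \notin \mathbb{Z}$, so $a_e(\lambda) > \lambda(p^e - 1)$ strictly. Applying the stabilization property with $\mu = \lambda - \eps$ for sufficiently small $\eps$ yields $\kappa^e(f^{a_e(\lambda)} F^e_\ast \tau(M, f^{\lambda - \eps})) \subseteq \tau(M, f^{\lambda'})$ for some $\lambda' \geq \lambda$, so the induced map on the quotient is zero and the Cartier structure on $Gr^\lambda M$ is nilpotent, in fact killed in a single step.

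The main work is the converse: showing that $(Gr^\lambda M,\, \kappa^e f^{\lambda(p^e - 1)})$ is non-nilpotent when $\lambda(p^e - 1)$ is an integer. Here I plan to exploit the defining property of the test module at an $F$-jumping number, namely that $\tau(M, f^{\lambda - \eps})$ is characterized (after passing to crystals) as the smallest submodule containing $\tau(M, f^\lambda)$ that is stable under $\kappa^e f^{a_e(\lambda)}$ with non-nilpotent quotient. Under the integrality hypothesis the operator scales correctly so that $\tau(M, f^{\lambda-\eps})$ is itself stable at the correct level; a hypothetical nilpotence of the induced Cartier structure on $Gr^\lambda M$ would then allow us to replace $\tau(M, f^{\lambda-\eps})$ by a strictly smaller stable submodule, contradicting $\lambda$ being an $F$-jumping number. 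I expect this step to be the technical heart of the argument and to rely on the characterization of test modules via Cartier crystals in the sense of Blickle--B\"ockle, as well as the existence of test elements detecting non-nilpotence.

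Finally, the "in particular" assertion is an arithmetic observation: writing $\lambda = a/d$ in lowest terms with $p \mid d$, the $p$-part of $d$ cannot divide $p^e - 1 \equiv -1 \pmod{p}$, and $\gcd(a, d) = 1$ prevents any cancellation, so $\lambda(p^e - 1) = a(p^e - 1)/d$ fails to be an integer for every $e \geq 1$. The previous case analysis then yields nilpotence of the Cartier structure on $Gr^\lambda M$ for all $e$, as claimed.
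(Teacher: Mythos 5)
Your treatment of the nilpotence direction is in line with the paper's argument: both use the Brian\c con--Skoda identity together with the compatibility of $\kappa$ with the test module filtration (Lemma \ref{CartierTauDivisionByP}) to show that when $\lceil \lambda(p^e-1)\rceil > \lambda(p^e-1)$ one step already lands inside $\tau(M, f^\lambda)$. The arithmetic observation proving the ``in particular'' statement is also correct.

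However, the proposal has a genuine gap in the non-nilpotence direction, which you rightly call the technical heart. You plan to characterize $\tau(M, f^{\lambda-\eps})$ ``as the smallest submodule containing $\tau(M, f^\lambda)$ that is stable under $\kappa^e f^{a_e(\lambda)}$ with non-nilpotent quotient'' and argue by contradiction via Cartier crystals. That characterization is not the definition of a test module (test modules are defined as the smallest $\mathcal{C}$-submodule whose inclusion into $M$ is a nil-isomorphism at associated primes, for the Cartier algebra with exponents $\lceil (\lambda-\eps)p^e\rceil$, not $a_e(\lambda)$), and the intended minimality/contradiction argument is not spelled out precisely enough to see it through. What the paper does instead is far more direct and closes the gap entirely: the same computation that proves nilpotence shows, when $\delta = 0$, that
\[
\kappa^e f^{\lambda(p^e-1)}\,\tau(M, f^{\lambda-\eps}) \;=\; \tau\!\bigl(M, f^{\lambda - \eps/p^e}\bigr) \;=\; \tau(M, f^{\lambda-\eps}),
\]
the last equality because one may shrink $\eps$ (using that the filtration is eventually constant just to the left of $\lambda$). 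Thus the operator is surjective on $\tau(M, f^{\lambda-\eps})$, hence surjective on $Gr^\lambda M$, and since $\lambda$ is an $F$-jumping number $Gr^\lambda M \neq 0$. A surjective $p^{-e}$-linear endomorphism of a non-zero finitely generated module cannot be nilpotent, and the converse follows with no appeal to crystals or test elements. You should replace the crystal-theoretic sketch with this observation.
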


Secondly, we also show, extending and vastly simplifying some results of \cite[Section 4]{blicklestaeblerbernsteinsatocartier}, that test modules admit a simple description, akin to the case of the test ideal in a polynomial ring, in many cases. A special instance is the following

\begin{TheoB}[cf.\ Theorem \ref{TauFRegularDescription}]
Let $R$ be essentially of finite type over an $F$-finite field, $(M, \kappa)$ an $F$-regular Cartier module  and $f \in R$ an $M$-regular element, then one has $\tau(M, f^\lambda) = \kappa^e f^{\lceil \lambda p^e \rceil} M$ for all $e \gg 0$.
\end{TheoB}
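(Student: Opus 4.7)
The plan is to set $T_e := \kappa^e(f^{\lceil \lambda p^e \rceil} M)$, show that $(T_e)_{e \geq 0}$ is an ascending chain of submodules of $M$ with $\bigcup_e T_e = \tau(M, f^\lambda)$, and then invoke Noetherianity of $M$ to conclude that the chain stabilizes, forcing $\tau(M, f^\lambda) = T_e$ for all $e \gg 0$.

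For the monotonicity $T_e \subseteq T_{e+1}$, the key input is that $F$-regularity of $(M,\kappa)$ forces $\kappa \colon F_* M \to M$ to be surjective. Given an element of $T_e$ of the form $\kappa^e(f^{\lceil \lambda p^e\rceil} m)$ with $m \in M$, write $m = \kappa(n)$ for some $n \in M$ and use the projection formula $f^a \kappa(x) = \kappa(f^{ap}x)$ to compute
\[
\kappa^e\bigl(f^{\lceil \lambda p^e\rceil}m\bigr) = \kappa^e\bigl(f^{\lceil\lambda p^e\rceil}\kappa(n)\bigr) = \kappa^{e+1}\bigl(f^{p\lceil \lambda p^e\rceil} n\bigr).
\]
Since $p\lceil \lambda p^e\rceil \in \mathbb{Z}$ and $p \lceil \lambda p^e\rceil \geq \lambda p^{e+1}$, we have $p\lceil \lambda p^e\rceil \geq \lceil \lambda p^{e+1}\rceil$, so this element lies in $T_{e+1}$.

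The second step is to identify $\bigcup_e T_e$ with $\tau(M, f^\lambda)$. The $F$-regularity hypothesis permits replacing the test submodule $\tau(M)$ appearing in the general definition of the filtration by $M$ itself, while the $M$-regularity of $f$ guarantees that the exponents $\lceil \lambda p^e\rceil$ capture the correct asymptotic data without introducing spurious torsion contributions. Under these two assumptions, the general construction collapses to $\tau(M, f^\lambda) = \sum_e \kappa^e(f^{\lceil \lambda p^e\rceil} M) = \bigcup_e T_e$. Finally, since $R$ is essentially of finite type over an $F$-finite field and $M$ is finitely generated, $M$ is Noetherian, so the ascending chain $T_0 \subseteq T_1 \subseteq \cdots$ stabilizes at some index $e_0$, giving $\tau(M, f^\lambda) = T_e$ for all $e \geq e_0$.

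The principal obstacle lies in the second step: reconciling the explicit formula $\bigcup_e T_e$ with the author's general definition of the test module filtration, which is typically built from a subtler asymptotic construction (the smallest submodule compatible with an asymptotic family of Cartier maps and containing a prescribed generic part). Showing that this general definition collapses to the single Cartier image $\kappa^e(f^{\lceil \lambda p^e\rceil}M)$ is exactly where both $F$-regularity of $(M,\kappa)$ and $M$-regularity of $f$ are genuinely used; once this identification is secured, the remainder of the argument is a formal Noetherian stabilization.
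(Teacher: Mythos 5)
Your scaffolding---ascending chain, union equals $\tau$, Noetherian stabilization---is logically sound, and the monotonicity step is correct: $F$-regularity does force $\kappa$ to be surjective, since $\tau(M,f^0)=M$ together with Lemma \ref{CartierTauDivisionByP} gives $\kappa M=\kappa\tau(M,f^0)=\tau(M,f^0)=M$, and your projection-formula computation then goes through. But the identification $\bigcup_e T_e=\tau(M,f^\lambda)$ is not a bookkeeping step; it \emph{is} the theorem, and you have not proved it. One inclusion is cheap: from the presentation $\tau(M,f^\lambda)=\sum_{e\geq e_0}\kappa^e f^{\lceil\lambda p^e\rceil}c^a\underline{M}$ and $\underline{M}=M$ one gets $\tau(M,f^\lambda)\subseteq\bigcup_e T_e$. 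The reverse inclusion, $T_e=\kappa^e f^{\lceil\lambda p^e\rceil}M\subseteq\tau(M,f^\lambda)$, is exactly the nontrivial assertion that the test element $c$ may be discarded, and the phrase about $M$-regularity ``capturing the correct asymptotic data'' is not an argument. The remark after Corollary \ref{TauStuff} shows this inclusion genuinely fails without $F$-regularity, so some real input is required here, not just a reconciliation of conventions.

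The paper's proof supplies that input directly and makes your ascending-chain/Noetherianity layer superfluous. By $F$-regularity $M=\tau(M,f^0)$, so Brian\c con--Skoda (\cite[Proposition 4.1]{blicklestaeblerfunctorialtestmodules}) gives $f^{\lceil\lambda p^e\rceil}M=\tau(M,f^{\lceil\lambda p^e\rceil})$, and Lemma \ref{CartierTauDivisionByP} then yields
\[
T_e=\kappa^e\tau\bigl(M,f^{\lceil\lambda p^e\rceil}\bigr)=\tau\bigl(M,f^{\lceil\lambda p^e\rceil p^{-e}}\bigr).
\]
This exhibits each $T_e$ as an actual member of the test module filtration at a parameter $\lceil\lambda p^e\rceil p^{-e}$ decreasing to $\lambda$ from above, so right-continuity gives $T_e=\tau(M,f^\lambda)$ for $e\gg 0$ outright. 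Both the monotonicity and the eventual stabilization you were aiming for are immediate corollaries of this one identity, so once you have the tools to carry out your ``second step'' you no longer need the first or the third.
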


\subsection*{Nilpotence and its relation to Bernstein-Sato polynomials}
We will also relate the Cartier structures in Theorem \ref{TheoNilpotentGr} to certain Cartier structures obtained from generalized eigenspaces of higher Euler operators that play a crucial role when constructing Bernstein-Sato polynomials in positive characteristic (see \cite{blicklestaeblerbernsteinsatocartier}). Similar results were observed by Bitoun (\cite{bitounbernsteinsatoposchar}) for the case that $M = R$ in the (equivalent) framework of unit $R[F]$-modules. His proof relies on formal properties of $p$-adic expansions. We will show that it is in fact also a formal consequence of the machinery of test modules via Theorem \ref{TheoNilpotentGr}. Let us explain this in more detail.

Let $\mathcal{D}^e_R \cong \End_R(F_\ast^e R)$ be the subring of the ring of differential operators $\mathcal{D}_R$ consisting of operators which are linear over $R^{p^e}$. Adding a variable and considering the ring of differential operators $\mathcal{D}_{R[t]}$ we have the so-called divided power Euler operators $\theta_{p^i}$ which act on a monomial $rt^m$ as $\binom{m}{p^i} rt^m$. Any $\mathcal{D}^e_R[\theta_{p^0}, \theta_p, \ldots, \theta_{p^{e-1}}]$-module admits a direct sum decomposition into generalized eigenspaces $E_i$, with $i \in \mathbb{F}_p^e$, where $\theta_{p^{j-1}}$ acts on $E_i$ by multiplication with $i_j$.

Let now $(M, \kappa)$ be a Cartier module, $f \in R$ an $M$-regular element and consider the graph embedding \[\gamma\colon \Spec R \longrightarrow \Spec R[t], t \longmapsto f. \] 
By adjunction for finite maps a Cartier structure $\kappa^e\colon F_\ast^e M \to M$ is equivalent to a map $C^e\colon M \to \Hom_R(F^e_\ast R, M) \eqqcolon {F^e}^! M$ and there is a natural map $c\colon \gamma_\ast {F^e}^! M \to {F^{e}}^! \gamma_\ast M$. The latter module naturally is a $\mathcal{D}^e_{R[t]}$-module by precomposition.

Then we can, for each $e$, consider the quotient \[N_e \coloneqq c(\gamma_\ast C^e(M))\mathcal{D}^e_R[\theta_1, \theta_p, \ldots, \theta_{p^{e-1}}]/c(\gamma_\ast C^e(fM))\mathcal{D}^e_R[\theta_1, \theta_p, \ldots, \theta_{p^{e-1}}]\] and its decomposition into generalized eigenspaces $E_i$ with respect to the $\theta$ as above.

Lifting the $i=(i_1, \ldots,i_e)$ for which the generalized eigenspaces of $N_e$ are non-zero to $\mathbb{Z}$ via $\mathbb{F}_p = \{0, \ldots, p-1\}$ we set $x_i \coloneqq (i_1 + i_2p + \cdots + i_e p^{e-1})p^{-e}$ and define the $e$th Bernstein-Sato polynomial \[b^e_{M,f}(s) \coloneqq \prod_{E_i \neq 0} (s-x_i) \in \mathbb{Q}[s].\]

\begin{TheoB}[cf.\ Theorem \ref{BSPGeneral1} and Corollary \ref{BernsteinSatoExtension}]
Let $R$ be regular and essentially of finite type over an $F$-finite field, $(M,\kappa)$ an $F$-regular Cartier module and $f$ and $M$-regular element. Then for all $e \gg 0$ the roots of $b_{M,f}^e(s)$ are given by $\frac{\lceil \lambda p^e \rceil - 1}{p^e}$ where $\lambda \in (0,1]$ runs over the $F$-jumping numbers of $\tau(M, f^\lambda)$.
\end{TheoB}

Here $F$-regularity is in the sense of \cite{blicklestaeblerfunctorialtestmodules}. This result strengthens \cite[Theorem 5.4]{blicklestaeblerbernsteinsatocartier} where the stronger version of $F$-regularity as defined in \cite{blicklep-etestideale} was used.

One has $\bigcup_{e \geq 0} \mathcal{D}^e_{R[t]} = \mathcal{D}_{R[t]}$ and the $N_e$ form a direct system so that one can also consider $\colim_e N_e$ and then look at the simultaneous generalized eigenspace for all Euler operators $\theta_{p^i}$. As it turns out this approach only recovers those $F$-jumping numbers $\lambda$ that lie in $(0,1] \cap \mathbb{Z}_{(p)}$.

\begin{TheoB}[cf.\ Theorem \ref{AnotherMainResult}]
Under the assumptions as in the previous theorem, assume further that $\lambda \in \mathbb{Q}$ is an $F$-jumping number of the test module filtration of $M$ along $f$. In particular, $\frac{\lceil \lambda p^e\rceil -1}{p^e}$ corresponds for all $e \gg 0$ to a zero $x_i$ of $b^e_{M,f}(s)$. Then there is an isomorphism of $\mathcal{D}^e_R[\theta_1, \ldots, \theta_{p^{e-1}}]$-modules ${F^e}^! Gr^\lambda M \to E_i$, where $E_i$ is the generalized $i$-eigenspace associated to $N_e$. This isomorphism induces a transition map \[{F^e}^! Gr^\lambda M \to {F^{e+a}}^! Gr^\lambda M.\] This transition map is induced by a non-nilpotent morphism \[C\colon Gr^\lambda M \to {F^s}^! Gr^\lambda M\] if and only if $\lambda(p^s -1) \in \mathbb{Z}$ and $e,a$ are multiples of $s$. Moreover, in this case $C$ is the adjoint of $\kappa^s f^{\lambda(p^s -1)}$.
\end{TheoB}

\subsection*{Non-$F$-pure modules and relation to Bernstein-Sato polynomials}
In Section \ref{Nonfpuremodules} we study basic properties of so-called non-$F$-pure modules $\sigma(M, f^\lambda)$. This section is, apart from some basic results in Section \ref{SectionNilpotenceAssGraded}, largely independent from the preceding sections. The $\sigma(M, f^\lambda)$ are a generalization of non-$F$-pure ideals studied in \cite{fujinotakagischwedenonlc}. We will see that some of the pathologies that non-$F$-pure ideals exhibit in comparison to their characteristic zero analogues, namely \emph{non-lc ideals}, are constrained to the cases where the ``jumps'' have a denominator divisible by $p$.

\begin{TheoB}[cf.\ Proposition \ref{SigmaEqualsTauFregularLocus}, Corollary \ref{Sigmataupdenominator}]
Let $R$ be essentially of finite type over an $F$-finite field, $(M, \kappa)$ a Cartier module and $f$ and $M$-regular element. Assume that $M_f$ is $F$-regular. Let $\lambda > 0$ be a rational number. Then the following hold:
\begin{enumerate}[(a)]
\item{If the denominator of $\lambda$ is not divisible by $p$, then $\sigma(M, f^\lambda) = \tau(M, f^{\lambda - \eps})$ for all $0 < \eps \ll 1$.}
\item{If the denominator of $\lambda$ is divisible by $p$, then $\sigma(M, f^\lambda) = \tau(M, f^{\lambda + \eps})$  for all $0 < \eps \ll 1$.}
\end{enumerate}
\end{TheoB}

We will obtain a partial generalization of the correspondence between $F$-jumping numbers and zeros of Bernstein-Sato polynomials for the case that $(M, \kappa)$ is $F$-regular to Cartier modules $(M,\kappa)$ for which $M_f$ is $F$-regular. Let us write $Gr_\sigma^\lambda M = \sigma(M, f^\lambda)/\sigma(M, f^{\lambda+\eps})$. With this notation we prove in Section \ref{BspSigmaSection}:

\begin{TheoB}[cf.\ Theorem \ref{LastTheo}]
Let $R$ be regular and essentially of finite type over an $F$-finite field. Let $(M, \kappa)$ be a Cartier module, $f \in R$ an $M$-regular element and assume that $M_f$ is $F$-regular. Let $\lambda \in (0,1] \cap \mathbb{Z}_{(p)}$. If $\frac{\lceil\lambda p^e\rceil -1}{p^e}$ is a zero of the $e$th Bernstein-Sato polynomial for some $e \gg 0$ such that $\lambda(p^e -1) \in \mathbb{Z}$, then $Gr_\sigma^\lambda M \neq 0$.
\end{TheoB}

We note that it is also possible to formulate the preceding theorem entirely in the language of test modules. It is however the hope of the author that this form generalizes to arbitrary Cartier modules $(M, \kappa)$.

\subsection*{Acknowledgements} I thank Manuel Blickle, Mircea Musta\c{t}\u{a} and Kevin Tucker for useful discussions. Part of this paper was conceived while the author was visiting the University of Utah. I thank Karl Schwede for inviting me and for inspiring discussions. I am also indebted to an anonymous referee who pointed out a mistake in a previous version and whose comments helped to improve the exposition. The author acknowledges support by grant STA 1478/1-1 and by SFB/Transregio 45 Bonn-Essen-Mainz of the Deutsche Forschungsgemeinschaft (DFG). 
\section{A brief review of test modules}
\label{TestmoduleReview}

In this section we review very briefly the necessary facts on test modules that we need. We refer the reader to \cite{blicklestaeblerfunctorialtestmodules} for a detailed treatment.

Fix an $F$-finite ring $R$. A \emph{Cartier module} is a pair $(M, \kappa)$, where $M$ is an $R$-module and $\kappa\colon F_\ast^e M \to M$ is an $R$-linear map. We will always assume that $e =1$ except when we consider certain subquotients $Gr^\lambda M$ and $Gr_\sigma^\lambda M$ in Sections \ref{SectionNilpotenceAssGraded} and \ref{BspSigmaSection}. If we assume that $\Spec R$ is embeddable into a smooth scheme, then for given $e$ there is a contravariant functor to the category constructible $\mathbb{F}_{p^e}$-sheaves on the \'etale site. If we localize at nilpotent Cartier modules (to be defined below), call the resulting category \emph{Cartier crystals}, then this functor induces an anti-equivalence between Cartier crystals and perverse constructible $\mathbb{F}_{p^e}$-sheaves on the \'etale site (see \cite{schedlmeiercartierpervers} and references therein).

Given a Cartier module $(M, \kappa)$ and $f \in R$ and $\lambda$ a non-negative rational number we can form $R$-linear maps $\kappa^e f^{\lceil \lambda p^e \rceil}\colon F_\ast^e M \to M$ given by $m \mapsto \kappa^e(f^{\lceil \lambda p^e \rceil} m)$ for varying $e \geq 1$. The collection of these maps with addition induced by the one in $M$ and multiplication by composition form an $\mathbb{N}$-graded subring $\mathcal{C}$ of $\bigoplus_{e \geq 0} \Hom(F_\ast^e M, M)$, where we set $\mathcal{C}_0 = R$. It has both a left and a right $R$-module structure that are related via $r \varphi = \varphi r^{p^e}$ for any homogeneous element $\varphi$ of degree $e$. This ring is a special case of a so-called \emph{Cartier algebra}.

\begin{Def}
Let $R$ be an $F$-finite ring. A \emph{Cartier algebra} is an $\mathbb{N}$-graded ring $\bigoplus_{e \geq 0} \mathcal{C}_e$ with $\mathcal{C}_0 = R$ satisfying the relation $r \varphi = \varphi r^{p^e}$ for any $\varphi \in \mathcal{C}_e$ and $r \in R$.
\end{Def}

As usual $\mathcal{C}_+ = \bigoplus_{e \geq 1} \mathcal{C}_e$. We will write $\mathcal{C}_+^h$ for $(\mathcal{C}_+)^h$.

\begin{Bem}
\label{RoundedCartierAlgebras}
We will mostly use Cartier algebras of the form $\kappa^e f^{\lceil \lambda p^e \rceil}$ in this article. The reader familiar with test ideals may notice that people also often use algebras of the form $\kappa^e f^{\lceil \lambda (p^e -1) \rceil}$. If one computes test modules then both notions yield the same result (cf.\ \cite[Lemma 3.1]{staeblertestmodulnvilftrierung}). However, the categories of crystals are not the same and this will play an important role later on (cf.\ Section \ref{Nonfpuremodules}).
\end{Bem}

A $\mathcal{C}$-module means a left module over $\mathcal{C}$. We will moreover always assume that it is finitely generated as an $R$-module. We call a $\mathcal{C}$-module $M$ nilpotent if $\mathcal{C}_+^a M = 0$ for some (equivalently all) $a \gg 0$. A morphism $\varphi\colon M \to N$ of $\mathcal{C}$-modules is a \emph{nil-isomorphism} if its kernel and cokernel are nilpotent.

If $M$ is a $\mathcal{C}$-module then the descending chain $\mathcal{C}_+ M \supseteq \mathcal{C}_+^2 M \supseteq \ldots$ stabilizes (see \cite[Proposition 2.13]{blicklep-etestideale}) and we denote its stable member by $\underline{M}$.

\begin{Def}
The test module $\tau(M, f^\lambda)$ is the smallest $\mathcal{C}$-submodule $N$ of $M$ such that the inclusion $H^0_\eta(N)_\eta \subseteq H^0_\eta(M)_\eta$ is a nil-isomorphism for every associated prime $\eta$ of the $R$-module $M$.
\end{Def}

At this point we encourage the reader to take a look at \cite[Sections 1 and 2]{blicklestaeblerfunctorialtestmodules} for further discussion. It is proven in \cite[Theorems 3.4 and 3.6]{blicklestaeblerfunctorialtestmodules} that test modules exist if $R$ is essentially of finite type over an $F$-finite field. Moreover, in this case the test module filtration $\tau(M, f^\lambda)_{\lambda \geq 0}$ is a non-increasing right-continuous filtration. Under suitable finiteness conditions (e.g.\, $R$ is essentially of finite type over an $F$-finite field) this filtration is also discrete. Many other formal properties like Skoda also hold in this more general situation (see \cite[Section 4]{blicklestaeblerfunctorialtestmodules}). We call a number $\lambda$ such that $\tau(M, f^\lambda) \neq \tau(M, f^{\lambda - \eps})$ for all $0 < \eps \leq \lambda$ an \emph{$F$-jumping number}.

Finally, we say that a prime $\eta \in \Spec R$ of a $\mathcal{C}$-module $M$ is an associated prime of $M$ if $H^0_\eta(M)_\eta$ is not nilpotent. These form a subset of the associated primes of the underlying $R$-module.

We call a Cartier module $(M, \kappa)$ \emph{$F$-regular} if $\tau(M, f^0) = M$. The smallest $\lambda > 0$ such that $\tau(M, f^\lambda) \neq \tau(M, f^0)$ is called the \emph{$F$-pure threshold} of $f$ with respect to $M$. More generally, if we have a $\mathcal{C}$-module $N$, where $\mathcal{C}_e = \kappa^e f^{\lceil \lambda p^e \rceil}$, then we say that  $N$ is $F$-regular if $\tau(N, f^\lambda) = N$.

If $N$ is a $\mathcal{C}$-module and $\eta_1, \ldots, \eta_n$ are its associated primes, then we call $c_1, \ldots, c_n$ a \emph{sequence of test elements} if $c_i \notin \eta_i$ and the $\underline{H^0_{\eta_i}(N_{c_i})}$ are $F$-regular. If all associated primes of $M$ are minimal then we only need a single test element and this condition simplifies: If $N$ is a $\mathcal{C}$-module whose associated primes are minimal, then we call $c \in R$ a test element if $c$ is not contained in any minimal prime of $N$ and $\underline{N_c}$ is $F$-regular.

With this notion one has

\begin{Theo}
Let $R$ be essentially of finite type over an $F$-finite field, $(M, \kappa)$ a Cartier module and $f \in R$. Then there exists a sequence of test elements $c_1, \ldots, c_n$ for the $\mathcal{C}$-module $M$, where $\mathcal{C}_e = \kappa^e f^{\lceil \lambda p^e \rceil}$, and one has
\[\tau(M, f^\lambda) = \sum_{e \geq e_0} \sum_{i=1}^n \mathcal{C}_{e} c_i^{a_i} \underline{H^0_{\eta_i}(M)} \]
for any $e_0 \geq 0$ and any $a_i \geq 1$.
Moreover, if $M$ only has minimal associated primes, then this simplifies to
\[\tau(M, f^\lambda) = \sum_{e \geq e_0} \mathcal{C}_e c^a \underline{M}.\]
\end{Theo}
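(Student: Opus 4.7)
The plan is to first establish the existence of a suitable sequence of test elements $c_1, \ldots, c_n$ and then to verify the formula by proving both containments for the $\mathcal{C}$-module $N := \sum_{e \geq e_0} \sum_{i=1}^n \mathcal{C}_e c_i^{a_i} \underline{H^0_{\eta_i}(M)}$.

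For the existence of test elements I would argue one associated prime at a time. Fix an associated prime $\eta_i$ of the $\mathcal{C}$-module $M$. The stable member $\underline{H^0_{\eta_i}(M)}_{\eta_i}$ is a non-nilpotent Cartier module over $R_{\eta_i}$ on which $\mathcal{C}_+$ acts surjectively by construction; passing to the residue field $\kappa(\eta_i)$ reduces the problem to a Cartier module over a field, where $F$-regularity of the non-nilpotent part is essentially automatic. By finite generation of $M$ and a spreading-out argument, one then obtains an element $c_i \in R \setminus \eta_i$ such that $\underline{H^0_{\eta_i}(M_{c_i})}$ is already $F$-regular over $R_{c_i}$ for the twisted Cartier algebra $\mathcal{C}_e = \kappa^e f^{\lceil \lambda p^e \rceil}$.

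To see $N \subseteq \tau(M, f^\lambda)$, localize the inclusion $\tau(M, f^\lambda) \subseteq M$ at $\eta_i$; the resulting nil-isomorphism on $H^0_{\eta_i}(-)_{\eta_i}$ forces $\underline{H^0_{\eta_i}(M)}_{\eta_i} \subseteq \tau(M, f^\lambda)_{\eta_i}$. Since $c_i$ is a unit at $\eta_i$ and all modules in sight are finitely generated, a denominator-clearing argument yields $\mathcal{C}_e c_i^{a_i} \underline{H^0_{\eta_i}(M)} \subseteq \tau(M, f^\lambda)$ at the global level. For the reverse containment, the minimality characterizing $\tau(M, f^\lambda)$ reduces us to verifying that $N$ itself satisfies the nil-isomorphism property at every associated prime $\eta_j$. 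Upon localization at $\eta_j$, the summands with $i \neq j$ are supported at other primes and vanish in $H^0_{\eta_j}(-)_{\eta_j}$ up to nilpotence, while the $i = j$ summand becomes $\sum_{e \geq e_0} \mathcal{C}_e \underline{H^0_{\eta_j}(M_{c_j})}$; by the $F$-regularity of $c_j$ this sum equals $\underline{H^0_{\eta_j}(M_{c_j})}$, which is nil-isomorphic to $H^0_{\eta_j}(M)_{\eta_j}$ by the definition of the stable member. Independence of the choices of $e_0$ and $a_i$ is then a routine absorption into larger powers of $\mathcal{C}_+$.

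The main obstacle I anticipate is the spreading-out step for the existence of test elements: one must verify carefully that $F$-regularity for the twisted Cartier algebra is preserved under localization, and descends from the generic point at $\eta_i$ to an open neighborhood. In the case where $M$ has only minimal associated primes the various $\eta_i$ can be avoided simultaneously by a single element $c$, and $\underline{M}$ differs from $\sum_i \underline{H^0_{\eta_i}(M)}$ only by a summand that is irrelevant after applying $\sum_{e \geq e_0} \mathcal{C}_e c^a (-)$, which recovers the stated simplification.
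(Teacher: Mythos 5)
The paper gives no internal proof of this theorem --- it cites Theorems~3.4 and~3.6 of \cite{blicklestaeblerfunctorialtestmodules} and Theorem~3.11 of \cite{blicklep-etestideale} --- so the comparison must be with what a correct argument has to contain. Your overall two-inclusion strategy is the natural one, but the forward inclusion $N \subseteq \tau(M, f^\lambda)$ has a genuine gap. From $\underline{H^0_{\eta_i}(M)}_{\eta_i} \subseteq \tau(M, f^\lambda)_{\eta_i}$ together with finite generation, one only gets that \emph{some} $d_i \notin \eta_i$ satisfies $d_i \cdot \underline{H^0_{\eta_i}(M)} \subseteq \tau(M, f^\lambda)$. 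The theorem asserts the far stronger fact that $c_i^{a_i}$ works for \emph{every} $a_i \geq 1$, and moreover after applying $\mathcal{C}_e$ for all $e \geq e_0$ with $e_0$ arbitrary. Your ``denominator-clearing'' invokes only that $c_i \notin \eta_i$ --- a property shared by the unspecified $d_i$ --- and never uses the $F$-regularity of $\underline{H^0_{\eta_i}(M_{c_i})}$, which is precisely what singles out $c_i$ among all elements outside $\eta_i$ and precisely what this direction of the proof must exploit. As written you obtain a presentation of $\tau$ with some auxiliary multipliers, not the stated uniform one.

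Two further points. On test element existence: the spreading-out step you flag as the main obstacle is indeed the crux, and the sketch as written would not close it. $F$-regularity of $\underline{H^0_{\eta_i}(M)}$ at the generic point is a condition involving all $e$ simultaneously, and finite generation of $M$ does not by itself produce a single $c_i$ that works uniformly in $e$ on a Zariski-open neighborhood; this uniformity is exactly what the test element theorem supplies, and it is established in \cite{blicklestaeblerfunctorialtestmodules} (cf.\ their Remark~3.7) by a separate argument rather than by spreading out from the residue field $\kappa(\eta_i)$. Finally, in the reverse inclusion, the assertion that the summands $\mathcal{C}_e c_i^{a_i} \underline{H^0_{\eta_i}(M)}$ with $i \neq j$ vanish in $H^0_{\eta_j}(-)_{\eta_j}$ up to nilpotence is not automatic when $\eta_j$ is an embedded prime containing $\eta_i$, since then the supports $V(\eta_i) \supseteq V(\eta_j)$ are nested and the cross term need not disappear; that step also requires a further argument.
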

\begin{proof}
See \cite[Theorem 3.4, Theorem 3.6]{blicklestaeblerfunctorialtestmodules} for the general case and \cite[Theorem 3.11]{blicklep-etestideale} for the special case
\end{proof}

It is mostly this presentation that we will be used in this article. Also note that we will prove shortly that, if $(M, \kappa)$ is $F$-regular, then one has in fact $\tau(M, f^\lambda) = \kappa^e f^{\lceil \lambda p^e \rceil} M$ (Theorem \ref{TauFRegularDescription} below).

\section{Nilpotence of the direct summands of the associated graded module}
\label{SectionNilpotenceAssGraded}
Throughout this section $R$ is a ring essentially of finite type over an $F$-finite field. This assumption is imposed to ensure existence of test modules (see \cite[Theorem 3.6]{blicklestaeblerfunctorialtestmodules}; in our setup this automatically implies existence of a sequence of test elements -- cf.\ \cite[Remark 3.7]{blicklestaeblerfunctorialtestmodules}) and discreteness of the filtration. Granting these notions our arguments work for arbitrary $F$-finite rings.

Fix a Cartier module $(M, \kappa)$ and $f \in R$.
In \cite[Proposition 4.5]{staeblertestmodulnvilftrierung} the author defined a Cartier structure on the direct summands of the associated graded module of the test module filtration. Namely, if $Gr^\lambda(M) = \tau(M,f^{\lambda-\eps})/\tau(M, f^\lambda)$, then $\kappa f^{\lceil \lambda(p-1)\rceil}$ operates on this quotient. In fact, more generally $\kappa^e f^{\lceil \lambda(p^e-1)\rceil}$ operates on $Gr^\lambda(M)$ and one easily checks if $Gr^\lambda M$ is nilpotent with respect to this Cartier structure, then also with respect to the one above. The map $\kappa^e f^{\lceil \lambda(p^e -1) \rceil}$ operators on $Gr^\lambda(M)$ due to Skoda and due to the fact, as we shall point out in detail below, that $\kappa^e \tau(M, f^\lambda) = \tau(M, f^{\lambda/p^e})$.

While this definition may seem ad hoc we point out that if $i\colon \Spec R/(f) \to \Spec R$ denotes the natural closed immersion and $f$ is a non-zero-divisor on $R$ then for any Cartier module $(M, \kappa)$ on $\Spec R$ the induced Cartier structure on $R^1i^! M$ is given by $\kappa f^{p-1}$ (cf.\ \cite[Example 3.3.12]{blickleboecklecartiercrystals}). By Skoda one easily sees that the support of $Gr^\lambda(M)$ is contained in $\Spec R/(f)$. We will in fact see shortly that these Cartier structures are very natural.

The next lemma was already proven in \cite[Proposition 3.2]{staeblertestmodulnvilftrierung} for the case that $M$ has only minimal primes. We give a simplified proof here.

\begin{Le}
\label{CartierTauDivisionByP}
Let $(M, \kappa)$ be a Cartier module and $f \in R$. Then for all $\lambda \geq 0$ we have $\kappa(\tau(M, f^\lambda)) = \tau(M, f^{\frac{\lambda}{p}})$.
\end{Le}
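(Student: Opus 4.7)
The plan is to derive both inclusions from the explicit presentation
$$\tau(M, f^\lambda) = \sum_{e \geq e_0} \sum_{i=1}^n \kappa^e f^{\lceil \lambda p^e \rceil}\, c_i^{a_i}\, \underline{H^0_{\eta_i}(M)}$$
provided by the theorem recalled above, combined with the elementary identity
$$\lceil \lambda p^e \rceil \;=\; \bigl\lceil \tfrac{\lambda}{p}\, p^{e+1} \bigr\rceil.$$
The identity is crucial because it says precisely that post-composing with $\kappa$ converts a degree $e$ generator of the Cartier algebra $\mathcal{C}^{(\lambda)}_e = \kappa^e f^{\lceil \lambda p^e \rceil}$ into a degree $e+1$ generator of $\mathcal{C}^{(\lambda/p)}_{e+1} = \kappa^{e+1} f^{\lceil (\lambda/p) p^{e+1}\rceil}$.

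First I would fix a sequence of test elements $c_1,\ldots,c_n$ for the associated primes $\eta_1,\ldots,\eta_n$ of $M$ and a starting index $e_0 \geq 0$ that are admissible simultaneously for both Cartier algebras $\mathcal{C}^{(\lambda)}$ and $\mathcal{C}^{(\lambda/p)}$. Such a common choice exists because each $F$-regularity condition on $\underline{H^0_{\eta_i}(M_{c_i})}$ holds for $c_i$ in a non-empty Zariski open subset of $\Spec R \setminus \eta_i$, and we can intersect finitely many such conditions.

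For the inclusion $\kappa(\tau(M, f^\lambda)) \subseteq \tau(M, f^{\lambda/p})$, I apply $\kappa$ to each generator of the presentation; by the identity the result $\kappa^{e+1} f^{\lceil (\lambda/p) p^{e+1}\rceil} c_i^{a_i} m$ is a generator of $\tau(M, f^{\lambda/p})$ in index shifted up by one. For the reverse inclusion, I invoke the theorem for $\tau(M, f^{\lambda/p})$ with starting index $e_0 + 1 \geq 1$: every generator $\kappa^{e'} f^{\lceil (\lambda/p) p^{e'}\rceil} c_i^{a_i} m$ with $e' \geq e_0+1$ equals $\kappa$ applied to $\kappa^{e'-1} f^{\lceil \lambda p^{e'-1}\rceil} c_i^{a_i} m \in \tau(M, f^\lambda)$, and so lies in $\kappa(\tau(M, f^\lambda))$.

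The one delicate point is that the stable modules $\underline{H^0_{\eta_i}(M)}$ appearing in the two presentations are a priori computed with respect to two different Cartier algebras, and may therefore be different submodules of $H^0_{\eta_i}(M)$. The same identity however gives the inclusion $\kappa\,\underline{H^0_{\eta_i}(M)}^{(\lambda)} \subseteq \underline{H^0_{\eta_i}(M)}^{(\lambda/p)}$, which suffices for the first inclusion; for the reverse inclusion one uses that for every $e \geq 1$ the piece $\kappa^e f^{\lceil (\lambda/p) p^e \rceil} H^0_{\eta_i}(M)$ equals $\kappa(\kappa^{e-1} f^{\lceil \lambda p^{e-1}\rceil} H^0_{\eta_i}(M))$, so the stabilization for $\mathcal{C}^{(\lambda/p)}$ is contained in $\kappa$ applied to the stabilization for $\mathcal{C}^{(\lambda)}$. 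Checking this compatibility is the main (but minor) technical obstacle; once it is in place the argument is essentially formal, generalizing \cite[Proposition 3.2]{staeblertestmodulnvilftrierung} from the case of minimal associated primes to the general case.
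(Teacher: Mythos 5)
Your proposal is correct and takes essentially the same approach as the paper: apply $\kappa$ to the explicit presentation $\tau(M, f^\lambda) = \sum_{e \ge e_0}\sum_i \kappa^e f^{\lceil \lambda p^e\rceil} c_i^{a_i}\,\underline{H^0_{\eta_i}(M)}$, note the (trivial) identity $\lceil \lambda p^e\rceil = \lceil (\lambda/p)p^{e+1}\rceil$, reindex, and recognize the presentation of $\tau(M, f^{\lambda/p})$. The paper does not separately discuss the compatibility of the stable modules $\underline{H^0_{\eta_i}(M)}$ and the test elements across the two Cartier algebras, so your cautionary remarks are extra care rather than a deviation; the only small imprecision is that what is actually used in the first inclusion is $\underline{H^0_{\eta_i}(M)}^{(\lambda)} \subseteq \underline{H^0_{\eta_i}(M)}^{(\lambda/p)}$ (which follows immediately from $\mathcal{C}_e^{(\lambda)} \subseteq \mathcal{C}_e^{(\lambda/p)}$) rather than the statement about $\kappa\,\underline{H^0_{\eta_i}(M)}^{(\lambda)}$, since the extra $\kappa$ produced by the application sits on the outside of the sum, not inside.
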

\begin{proof}
By virtue of \cite[Theorem 3.4]{blicklestaeblerfunctorialtestmodules} we have \[\tau(M, f^\lambda) = \sum_{i=1}^n \sum_{e \geq e_0} \kappa^e f^{\lceil \lambda p^e\rceil} c_i \underline{H^0_{\eta_i}(M)}\] for any $e_0 \geq 0$, where the $\eta_i$ are the associated primes of $M$ and the $c_i$ form a sequence of test elements in the sense of \cite[Definition 3.1]{blicklestaeblerfunctorialtestmodules}. We thus have \begin{align*} \kappa (\tau(M,f^\lambda)) &= \sum_{i=1}^n \sum_{e \geq e_0} \kappa^{e+1} f^{\lceil \frac{\lambda}{p} p^{e+1}\rceil} c_i \underline{H^0_{\eta_i}(M)}  \\&= \sum_{i=1}^n \sum_{e \geq e_0 +1} \kappa^{e} f^{\lceil \frac{\lambda}{p} p^{e}\rceil} c_i \underline{H^0_{\eta_i}(M)} = \tau(M, f^{\frac{\lambda}{p}}). \end{align*}
\end{proof}

\begin{Le}
\label{Remarkable}
Let $(M, \kappa)$ be a Cartier module and $f \in R$. Assume that $\lambda$ is an $F$-jumping number of $\tau(M, f^\mu)$. For any integer $a$ the map given by $\kappa^e f^{a}$ induces a Cartier structure on $Gr^{\lambda} M$ if and only if $a \geq \lceil \lambda (p^e -1)\rceil$.
\end{Le}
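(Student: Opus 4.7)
The plan is to derive both directions from a single identity that strengthens Lemma \ref{CartierTauDivisionByP}: for any non-negative integer $a$ and any $\mu \geq 0$ one has
\[ \kappa^e(f^a \tau(M, f^\mu)) = \tau(M, f^{(a+\mu)/p^e}). \]
I would prove this using the explicit description of $\tau$ given by the theorem at the end of Section \ref{TestmoduleReview}. Applying $\kappa^e f^a$ to a generator $\kappa^{e'} f^{\lceil \mu p^{e'}\rceil} c_i x$ of $\tau(M, f^\mu)$ and using the projection formula $\kappa^{e'}(r^{p^{e'}} y) = r\,\kappa^{e'}(y)$ to pull $f^a$ inside the Frobenius gives $\kappa^{e+e'}(f^{a p^{e'} + \lceil \mu p^{e'}\rceil} c_i x)$. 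Since $a p^{e'} + \lceil \mu p^{e'}\rceil = \lceil (a+\mu) p^{e'}\rceil$, reindexing $e' \mapsto e + e'$ recovers precisely the description of $\tau(M, f^{(a+\mu)/p^e})$.

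Granting this identity, the ``if'' direction is immediate. Assuming $a_e(\lambda) \geq \lceil \lambda(p^e-1)\rceil$ gives $(a_e(\lambda)+\lambda)/p^e \geq \lambda$, so $\kappa^e f^{a_e(\lambda)}$ sends $\tau(M, f^\lambda)$ into $\tau(M, f^\lambda)$. For $\eps > 0$ small enough that the filtration is constant on $[\lambda-\eps,\lambda)$, the inequality $(a_e(\lambda)+\lambda-\eps)/p^e > \lambda - \eps$ shows that $\tau(M, f^{\lambda-\eps})$ is also preserved. Hence a well-defined $p^{-e}$-linear endomorphism descends to $Gr^\lambda M$.

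For the ``only if'' direction, suppose $a_e(\lambda) < \lceil \lambda(p^e-1)\rceil$. Since $a_e(\lambda)$ is an integer, this forces $(a_e(\lambda)+\lambda)/p^e < \lambda$. By the identity, $\kappa^e(f^{a_e(\lambda)}\tau(M, f^\lambda)) = \tau(M, f^{(a_e(\lambda)+\lambda)/p^e})$, and it suffices to show that this strictly contains $\tau(M, f^\lambda)$. By discreteness of the filtration and the assumption that $\lambda$ is an $F$-jumping number (which is implicit whenever $Gr^\lambda M \neq 0$), one may choose $\delta > 0$ with $\lambda - \delta > (a_e(\lambda)+\lambda)/p^e$ and $\tau(M, f^{\lambda-\delta}) \supsetneq \tau(M, f^\lambda)$. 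Thus the image of $\tau(M, f^\lambda)$ strictly exceeds $\tau(M, f^\lambda)$ and no map descends to $Gr^\lambda M$.

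The substantive point is the \emph{equality} in the identity above: the ``if'' direction only needs the containment $\subseteq$, but the reverse containment is exactly what is needed to rule out smaller values of $a_e(\lambda)$. Both inclusions fall out of the generator description of $\tau$ simultaneously, so the obstacle here is more notational than conceptual.
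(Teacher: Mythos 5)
Your proof is correct and takes essentially the same route as the paper: the identity $\kappa^e f^a\,\tau(M, f^\mu) = \tau(M, f^{(a+\mu)/p^e})$ is exactly what the paper assembles from Brian\c{c}on--Skoda together with Lemma~\ref{CartierTauDivisionByP}, and both you and the paper then read off the answer from the inequality $(a_e(\lambda)+\lambda)/p^e \geq \lambda$. The only cosmetic difference is that you reprove this identity directly from the generator description of $\tau$ rather than citing the two lemmas (which are themselves proved from that description).
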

\begin{proof}
Using Lemma \ref{CartierTauDivisionByP} and Skoda (\cite[Proposition 4.1]{blicklestaeblerfunctorialtestmodules}) one has \[\kappa^e f^{a} \tau(M, f^\lambda) = \tau(M, f^{\frac{\lambda + a}{p^e}}).\] For this to induce a Cartier structure on the quotient $Gr^\lambda M = \tau(M,f^\lambda)/\tau(M, f^{\lambda - \eps})$ we must have $\frac{\lambda + a}{p^e} \geq \lambda$. Equivalently, $a \geq \lambda(p^e  -1)$ and since $a$ is an integer this is equivalent to $a \geq \lceil \lambda (p^e -1)\rceil$. One similarly checks that in this case $\kappa^e f^{a} \tau(M, f^{\lambda - \eps}) \subseteq \tau(M, f^{\lambda - \eps})$ using that the test module filtration is non-increasing.
\end{proof}

The first main result of this section is

\begin{Theo}
\label{TheoNilpotentGr}
Let $(M, \kappa)$ be a Cartier module, $f \in R$ and $\lambda$ an $F$-jumping number of the test module filtration of $M$ along $f$. Let $a \geq \lceil \lambda (p^e-1)\rceil$. Then the Cartier structure on $Gr^\lambda M$ defined by $\kappa^e f^{a}$ is not nilpotent if and only if $a = \lambda(p^e-1)$ and this quantity is an integer. In particular, if the denominator of $\lambda$ is divisible by $p$, then all these Cartier structures are nilpotent.
\end{Theo}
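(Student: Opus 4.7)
The strategy is to iterate the Cartier operator $\kappa^e f^{a_e(\lambda)}$ and track its effect on the filtration via the identity established in the proof of Lemma~\ref{Remarkable}, namely
\[
\kappa^e f^{a}\, \tau(M, f^\mu) \;=\; \tau\!\left(M,\, f^{(\mu + a)/p^e}\right)
\]
(a consequence of Lemma~\ref{CartierTauDivisionByP} together with Briançon--Skoda, valid once $a$ is large enough, which will be automatic in the iterates below).

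First I would compute the $n$-th iterate. Using the relation $\kappa^e r = r^{p^e}\kappa^e$, one checks
\[
\bigl(\kappa^e f^{a_e(\lambda)}\bigr)^n \;=\; \kappa^{en}\, f^{b_n}, \qquad b_n \;=\; a_e(\lambda)\cdot\frac{p^{en}-1}{p^e-1}.
\]
Applying the identity above with $\mu = \lambda - \eps$ and abbreviating $\delta := a_e(\lambda) - \lambda(p^e-1)\geq 0$ (which is $\geq 0$ by Lemma~\ref{Remarkable}, and is strictly positive whenever $\lambda(p^e-1) \notin \mathbb{Z}$), a short manipulation gives
\[
\bigl(\kappa^e f^{a_e(\lambda)}\bigr)^n\, \tau(M, f^{\lambda-\eps}) \;=\; \tau(M, f^{\mu_n}), \qquad \mu_n \;=\; \lambda \;-\; \frac{\eps}{p^{en}} \;+\; \frac{\delta\,(p^{en}-1)}{p^{en}(p^e-1)}.
\]

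I would then split into two cases. If $\delta = 0$, which forces $\lambda(p^e-1) \in \mathbb{Z}$ and $a_e(\lambda)$ to equal this integer, then $\mu_n = \lambda - \eps/p^{en}$, so $\mu_n \in (\lambda-\eps,\,\lambda)$ for every $n\geq 1$. By discreteness of the test module filtration I may shrink $\eps$ so that $\tau(M, f^{\lambda-\eps'})$ is constant on $\eps' \in (0,\eps]$, and then $\tau(M, f^{\mu_n}) = \tau(M, f^{\lambda-\eps})$. Thus every iterate acts surjectively (as the identity) on $Gr^\lambda M$, which is nonzero because $\lambda$ is an $F$-jumping number, so the Cartier structure is not nilpotent. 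If instead $\delta > 0$, then
\[
\mu_n - \lambda \;=\; -\frac{\eps}{p^{en}} + \frac{\delta(p^{en}-1)}{p^{en}(p^e-1)} \;\xrightarrow[n\to\infty]{}\; \frac{\delta}{p^e-1} \;>\; 0,
\]
so for $n$ sufficiently large $\mu_n \geq \lambda$, whence $\tau(M, f^{\mu_n}) \subseteq \tau(M, f^\lambda)$ and the iterate vanishes on $Gr^\lambda M$; the Cartier structure is nilpotent.

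The ``in particular'' sentence is then immediate: writing $\lambda = r/s$ in lowest terms with $p\mid s$, one has $\gcd(s, p^e - 1) < s$ for every $e \geq 1$ (since $p\nmid p^e-1$), so $\lambda(p^e-1) \notin \mathbb{Z}$, forcing $\delta > 0$ and hence nilpotence for all $e$. The only real obstacle is the bookkeeping in the exponent of $\mu_n$ and the appeal to discreteness of the filtration to collapse $\tau(M, f^{\lambda - \eps/p^{en}})$ down to $\tau(M, f^{\lambda-\eps})$ in the non-nilpotent case; both are routine given the results already in place.
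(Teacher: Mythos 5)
Your proof is correct, and it rests on exactly the same two ingredients as the paper's: Brian\c con--Skoda combined with Lemma~\ref{CartierTauDivisionByP} to convert the action of $\kappa^e f^{a}$ on test modules into a shift of the exponent. The one tactical difference is in how nilpotence is extracted when $\delta := a_e(\lambda) - \lambda(p^e-1) > 0$. You iterate: after $n$ applications the exponent becomes $\mu_n = \lambda - \eps/p^{en} + \delta(p^{en}-1)/\bigl(p^{en}(p^e-1)\bigr)$, which tends to $\lambda + \delta/(p^e-1) > \lambda$, so some fixed power of the operator lands in $\tau(M, f^\lambda)$. The paper instead applies the operator only once and exploits the observation that $\eps$ can be shrunk without changing $\tau(M, f^{\lambda-\eps})$: choosing $\eps < \delta$ gives $\lambda + (\delta-\eps)/p^e > \lambda$ already, so a single application kills $Gr^\lambda M$ --- i.e.\ the Cartier structure is actually zero, not merely nilpotent. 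Both arguments are valid; the paper's is a touch sharper because it yields the stronger conclusion of vanishing in one step and requires no limit. Your treatment of the $\delta=0$ case (using discreteness to collapse $\tau(M,f^{\lambda-\eps/p^{en}})$ to $\tau(M,f^{\lambda-\eps})$ and noting $Gr^\lambda M \neq 0$ because $\lambda$ is a jumping number) is what the paper leaves implicit, and your deduction of the ``in particular'' clause from $p\nmid p^e-1$ is exactly right. One small remark: the caveat that the identity $\kappa^e f^a\,\tau(M,f^\mu) = \tau(M,f^{(\mu+a)/p^e})$ needs $a$ large is unnecessary --- Brian\c con--Skoda and Lemma~\ref{CartierTauDivisionByP} impose no lower bound on $a$ --- but it is harmless.
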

\begin{proof}
We have just seen in Lemma \ref{Remarkable} that $\kappa^e f^{a}$ defines a Cartier structure on $Gr^\lambda M$.
We may write $\lceil \lambda(p^e-1) \rceil = \lambda(p^e -1) + \delta$ with $0 \leq \delta < 1$ and $\delta = 0$ if and only if $\lambda(p^e -1 )$ is an integer. By Skoda (\cite[Proposition 4.1]{blicklestaeblerfunctorialtestmodules}) we have \[\kappa^e f^{\lambda(p^e -1) + \delta} \tau(M, f^{\lambda -\eps}) = \kappa^e \tau(M, f^{\lambda- \eps + \lambda(p^e -1) + \delta}) = \kappa^e \tau(M,f^{\lambda p^e + \delta - \eps}).\] 
Now we use Lemma \ref{CartierTauDivisionByP} and obtain \[\kappa^e \tau(M, f^{\lambda p^e + \delta -\eps})  = \tau(M, f^{\lambda + \frac{\delta - \eps}{p^e}}).\] Note in fact, that we may take any $\eps'$ such that $\eps> \eps' >0 $ and still have $\tau(M,f^{\lambda -\eps'}) = \tau(M, f^{\lambda-\eps})$. Hence, if $\delta >0$ this actually forces $\eps < \delta$. But then $\tau(M,f^{\lambda + \frac{\delta - \eps}{p^e}}) \subseteq \tau(M, f^\lambda)$ which shows that the Cartier structure is nilpotent. The same argument also shows nilpotence for any $a > \lceil \lambda(p^e -1)\rceil$.
\end{proof}

\begin{Bem}
Note that with the notation of the proof of Theorem \ref{TheoNilpotentGr} if $\mu < \lambda$ is the previous $F$-jumping number then necessarily $\lambda- \mu \leq \delta$. In particular, if $\lambda$ is the $F$-pure threshold, then it actually follows that if $\delta \neq 0$ then $\delta \geq \lambda$ since we can take any $0 < \eps < \lambda$ without changing $\tau(M, f^{\lambda-\eps})$. Still assuming that $\lambda$ is the $F$-pure threshold and writing $\delta = \lceil \lambda(p^e -1) \rceil - \lambda(p^e -1) \geq \lambda$ we obtain equivalently that $\lceil \lambda(p^e -1) \rceil \geq \lambda p^e$. From this one easily deduces that $\lambda \notin (\frac{a}{p^e}, \frac{a}{p^e -1})$ for any integer $0 \leq a \leq p^e -1$. This is the analogue of \cite[Proposition 4.2]{hernandezfpurityhypersurfaces} for modules.
\end{Bem}

\begin{Bsp}
Consider the cusp $f = x^2 + y^3 \in \mathbb{F}_p[x,y] = R$. If $\lambda$ denotes the $F$-pure threshold then $\tau(R, f^\lambda) = (x,y)$ and for $p > 3$ one has (see \cite[Example 4.3]{mustatatakagiwatanabefthresholdsbernsteinsato}) \[\lambda =
\begin{cases}
\frac{5}{6} & p \equiv 1 \mod 3,\\
\frac{5}{6} - \frac{1}{6p} & p \equiv 2 \mod 3.
\end{cases}\] Moreover, $\lambda = \frac{1}{2}$ for $p = 2$ and $\lambda = \frac{2}{3}$ if $p =3$. So if $p \equiv 1 \mod 3$ we can take $e =1$ and obtain $\frac{5}{6} (p -1) \in \mathbb{Z}$. Since the quotient $Gr^\lambda$ is just $\mathbb{F}_p$ and we know that the obtained Cartier structure is not nilpotent it has to be $\kappa = \id$.

Of course, for any $p$ one has $Gr^\lambda = \mathbb{F}_p$ (as an $R$-module quotient) so that it admits the non-nilpotent Cartier structure $\kappa = \id$.
\end{Bsp}

We end this section by proving a simple description of $\tau(M, f^\lambda)$ in the case where $(M, \kappa)$ is $F$-regular. This is very useful for computations. We will use it in the next section to extend the relation of $F$-jumping numbers and zeros of Bernstein-Sato polynomials (\cite{blicklestaeblerbernsteinsatocartier}) to $F$-regular Cartier modules.

\begin{Theo}
\label{TauFRegularDescription}
Let $(M, \kappa)$ be a Cartier module and $f$ an $M$-regular element. If $(M, \kappa)$ is $F$-regular, then $\tau(M, f^\lambda) = \kappa^e f^{\lceil \lambda p^e \rceil} M$ for all $e \gg 0$.
\end{Theo}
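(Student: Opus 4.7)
The plan is to chain three tools that have already been established: $F$-regularity (to replace $M$ by $\tau(M, f^0)$), Brian\c con--Skoda in the form $f^a \tau(M, f^\mu) = \tau(M, f^{\mu+a})$ for integers $a \geq 0$ (to absorb the factor of $f^{\lceil \lambda p^e \rceil}$ into a test module), and the iterated form of Lemma \ref{CartierTauDivisionByP} (to pull $\kappa^e$ inside as a division of the exponent by $p^e$). The conclusion is then forced by right-continuity and discreteness of the test module filtration.

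Concretely, I would run the identification
\[ \kappa^e f^{\lceil \lambda p^e \rceil} M \;=\; \kappa^e f^{\lceil \lambda p^e \rceil} \tau(M, f^0) \;=\; \kappa^e \tau\bigl(M, f^{\lceil \lambda p^e \rceil}\bigr) \;=\; \tau\bigl(M, f^{\lceil \lambda p^e \rceil / p^e}\bigr). \]
The first equality is $F$-regularity. The second is Brian\c con--Skoda applied with $\mu = 0$ and the integer $a = \lceil \lambda p^e \rceil$; this is precisely the form used already in the proof of Theorem \ref{TheoNilpotentGr}, and the hypothesis that $f$ is $M$-regular guarantees that the iteration $\tau(M, f^a) = f \tau(M, f^{a-1}) = \ldots = f^a \tau(M, f^0)$ behaves as expected. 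The case $\lambda = 0$ is trivial, so we may assume $\lceil \lambda p^e \rceil \geq 1$. The third equality is $e$-fold application of Lemma \ref{CartierTauDivisionByP}.

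For the final step, write $\lceil \lambda p^e \rceil/p^e = \lambda + \eta_e/p^e$ with $0 \leq \eta_e < 1$, so that this exponent converges to $\lambda$ from above. By discreteness of the test module filtration there exists $\eps > 0$ such that $\tau(M, f^{\lambda + \eps'}) = \tau(M, f^\lambda)$ for every $0 < \eps' \leq \eps$. Picking $e$ large enough that $\eta_e/p^e \leq \eps$ (which only fails finitely often) yields $\tau(M, f^{\lceil \lambda p^e \rceil/p^e}) = \tau(M, f^\lambda)$, completing the proof.

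There is essentially no obstacle: the content of the statement is repackaged from already-established results. The only delicate point worth flagging is that Brian\c con--Skoda is invoked starting from $\tau(M, f^0) = M$, which is exactly why both hypotheses matter — $F$-regularity provides the base case $\tau(M, f^0) = M$, and the $M$-regularity of $f$ licenses the successive absorption $f \tau(M, f^{a-1}) = \tau(M, f^a)$ used to reach exponent $\lceil \lambda p^e \rceil$.
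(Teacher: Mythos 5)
Your proof is correct and follows exactly the same route as the paper: replace $M$ by $\tau(M, f^0)$ via $F$-regularity, absorb $f^{\lceil \lambda p^e \rceil}$ into the test module via Brian\c{c}on--Skoda, pull $\kappa^e$ through via the iterated Lemma \ref{CartierTauDivisionByP} to land at $\tau(M, f^{\lceil \lambda p^e \rceil / p^e})$, and then conclude by right-continuity/discreteness since $\lceil \lambda p^e \rceil / p^e \to \lambda$ from above. The only difference is cosmetic verbosity.
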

\begin{proof}
By $F$-regularity we have $M = \tau(M, f^0)$. Hence, by Skoda (\cite[Proposition 4.1]{blicklestaeblerfunctorialtestmodules}) and Lemma \ref{CartierTauDivisionByP} we get \[\kappa^e f^{\lceil \lambda p^e \rceil} M = \kappa^e f^{\lceil \lambda p^e \rceil} \tau(M, f^0) = \kappa^e \tau(M, f^{\lceil \lambda p^e\rceil}) = \tau(M, f^{\lceil \lambda p^e \rceil p^{-e}}).\]
Since $\lambda \leq \frac{\lceil \lambda p^e\rceil}{p^e} \leq \lambda + \frac{1}{p^e}$ we conclude by right-continuity that $\tau(M, f^{\lceil \lambda p^e \rceil p^{-e}}) = \tau(M, f^{ \lambda})$ for all $e$ sufficiently large.
\end{proof}

\begin{Ko}
\label{TauStuff}
Let $(M, \kappa)$ be a Cartier module and $f$ an $M$-regular element. Then $\tau(M, f^\lambda) = \tau(\tau(M, f^0), f^\lambda)$. In particular, we have $\tau(M, f^\lambda) = \kappa^e f^{\lceil \lambda p^e \rceil} \tau(M, f^0)$.
\end{Ko}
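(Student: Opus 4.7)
The approach is to reduce to Theorem \ref{TauFRegularDescription} by observing that $N := \tau(M, f^0)$ is itself an $F$-regular Cartier submodule of $M$, and that the test module filtration only depends on $N$.

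The first step is to set $N := \tau(M, f^0)$ and record two easy facts: $(N, \kappa|_N)$ is $F$-regular (this is essentially the definition, since $\tau(M,f^0)$ is characterized as the smallest Cartier submodule nil-isomorphic to $M$ at every associated prime, and thus $\tau(N, f^0) = N$), and $f$ is $N$-regular because $N \hookrightarrow M$ and $f$ is $M$-regular. Granting the equality $\tau(M, f^\lambda) = \tau(N, f^\lambda)$ for the moment, the ``in particular'' is then immediate by applying Theorem \ref{TauFRegularDescription} to the $F$-regular Cartier module $N$: one gets $\tau(N, f^\lambda) = \kappa^e f^{\lceil \lambda p^e \rceil} N$ for $e \gg 0$.

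For the core equality $\tau(M, f^\lambda) = \tau(N, f^\lambda)$, the plan is to use the explicit sum presentation recalled at the end of Section \ref{TestmoduleReview}:
\[
\tau(M, f^\lambda) = \sum_{i=1}^n \sum_{e \geq e_0} \kappa^e f^{\lceil \lambda p^e\rceil} c_i \underline{H^0_{\eta_i}(M)}.
\]
By the defining property of $N$, the inclusion $N \hookrightarrow M$ induces a nil-isomorphism $H^0_{\eta_i}(N) \to H^0_{\eta_i}(M)$ at each associated prime $\eta_i$ of $M$; in particular the associated primes of $N$ as a Cartier module are exactly the $\eta_i$, and the stable members satisfy $\underline{H^0_{\eta_i}(N)} = \underline{H^0_{\eta_i}(M)}$ (since the stabilization only sees the non-nilpotent part). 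The same elements $c_i$ therefore form a sequence of test elements for $N$ with respect to the Cartier algebra $\mathcal{C}_e = \kappa^e f^{\lceil \lambda p^e\rceil}$. Plugging this into the analogous sum presentation for $\tau(N, f^\lambda)$ gives the same expression, so the two test modules coincide.

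The only step that requires a little care is the identification $\underline{H^0_{\eta_i}(N)} = \underline{H^0_{\eta_i}(M)}$ and the verification that the $c_i$ remain test elements for $N$; both follow formally from the fact that $N \to M$ is a nil-isomorphism at each $\eta_i$ and from the functoriality of the $\underline{(-)}$ construction, but this is the main conceptual point to spell out. Everything else is bookkeeping with the formulas already established in this section.
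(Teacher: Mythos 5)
Your proof is correct in substance, but it takes a genuinely different route from the paper. The paper proves $\tau(M, f^\lambda) = \tau(\tau(M, f^0), f^\lambda)$ entirely abstractly: the inclusion $\subseteq$ is deduced by first showing $\tau(M, f^\lambda) \subseteq \tau(M, f^0)$ (because nilpotence with respect to $\kappa$ implies nilpotence with respect to the bigger Cartier algebra $\mathcal{C}_e = \kappa^e f^{\lceil \lambda p^e \rceil}$, so $\tau(M, f^0)$ already satisfies the defining universal property of $\tau(M, f^\lambda)$), and then applying functoriality (Proposition 1.15 of the reference) together with idempotence of $\tau$. You instead compute both sides through the explicit sum presentation, which requires showing that $N = \tau(M, f^0)$ and $M$ share associated primes, test elements, and stable members $\underline{H^0_{\eta_i}(-)}$ — a heavier but perfectly workable route.

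One place you should tighten: the nil-isomorphisms $H^0_{\eta_i}(N) \hookrightarrow H^0_{\eta_i}(M)$ coming from the definition of $\tau(M, f^0)$ are with respect to the structural map $\kappa$ alone, but everything you subsequently use them for — equality of associated primes as $\mathcal{C}$-modules, equality of stable members $\underline{(-)}$, and the $F$-regularity of $\underline{H^0_{\eta_i}(N_{c_i})}$ — is with respect to the Cartier algebra $\mathcal{C}_e = \kappa^e f^{\lceil \lambda p^e \rceil}$. You need the observation (which is exactly the paper's "a fortiori" step) that a module killed by $\kappa^a$ is killed by $\mathcal{C}_+^a$, since every degree-$e$ element of $\mathcal{C}_+$ factors through $\kappa^e$. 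With that bridge in place, $\underline{M} = \mathcal{C}_+^h M \subseteq N$ for $h \gg 0$ forces $\underline{M} = \underline{N}$, and localizing at $c_i$ preserves the nil-isomorphism, so the $c_i$ remain test elements for $N$. Your argument that $(N, \kappa)$ is $F$-regular is really just the idempotence $\tau(\tau(M,f^0),f^0) = \tau(M,f^0)$; it would be cleaner to invoke that directly rather than re-deriving it from the universal property in words.
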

\begin{proof}
Clearly, $\tau(M, f^0) \subseteq M$ so that $\tau(\tau(M, f^0), f^\lambda) \subseteq \tau(M, f^\lambda)$ by \cite[Proposition 1.15]{blicklestaeblerfunctorialtestmodules}.

For the other inclusion, by definition $\tau(M, f^0)$ is the smallest submodule of $M$ for which the inclusions $H^0_\eta(\tau(M, f^0))_\eta \subseteq H^0_\eta(M)_\eta$ are nil-isomorphisms with respect to $\kappa$ for all associated primes $\eta$ of $M$. Being a nil-isomorphism here just means that some power of $\kappa$ annihilates the cokernel. But then a fortiori some power of $\mathcal{C}_+ = \bigoplus_{e \geq 1} \kappa f^{\lceil \lambda p^e \rceil}$ acts as zero on this cokernel. By definition of $\tau(M, f^\lambda)$ this shows $\tau(M, f^\lambda) \subseteq \tau(M, f^0)$. Now we may apply $\tau(-, f^\lambda)$ to this inclusion to obtain $\tau(M, f^\lambda) \subseteq \tau(\tau(M, f^0), f^\lambda)$, where we again use \cite[Proposition 1.15]{blicklestaeblerfunctorialtestmodules} and the fact that $\tau$ (for a fixed Cartier algebra) is idempotent.

The final claim is an immediate application of Theorem \ref{TauFRegularDescription}.
\end{proof}

\begin{Bem}
The assumption on the $F$-regularity cannot be omitted, that is, if $M$ is not $F$-regular, then $\tau(M, f^\lambda) \neq \kappa^e f^{\lceil \lambda p^e\rceil} M$ in general. Consider for example $R = \mathbb{F}_p[x,y]$ and the Cartier module $M =\mathbb{F}_p[x,y]\cdot y^{-1} \subseteq j_\ast \mathbb{F}_p[x,y,y^{-1}]$, where $j\colon D(y) \to \Spec R$, with Cartier structure induced by localization.

Then $M$ is not $F$-regular since $\mathbb{F}_p[x,y]$ is a proper submodule that generically agrees with $M$. It is easy to see that $M$ is $F$-pure. Moreover, $y$ is a test element for $(M, \kappa)$ and therefore $\tau(M, x^\lambda) = x^{\lfloor \lambda \rfloor} R$ while $\kappa^e x^{\lceil \lambda p^e \rceil} M$ is not even contained in $R$ since $y^{-1}$ is a fixed point for the Cartier operation.

Similarly, if $M$ is not $F$-pure and has only minimal associated primes then $\tau(M, f^\lambda) \neq \kappa^e f^{\lceil \lambda p^e\rceil} \kappa^a c M$ in general, where $c$ is a test element. For example, consider the Cartier module $M = k[x] \cdot x^{-n} \subseteq k[x]$ with $n \geq 2$ and take $\lambda = 0$ and $f = x$. Clearly, $c = x$ is a test element and $\underline{M} = \kappa^a k[x] \cdot x^{-n} = k[x] \cdot x^{-1}$. We see that $\tau(M, x^0) = k[x]$ by the theorem. However, $\kappa^a x M = k[x] \cdot x^{-1}$ for all $a \gg 0$ so that (since $\lambda =0$) $\kappa^e x^{0} k[x] \cdot x^{-1} = k[x] \cdot x^{-1}$.
\end{Bem}

\section{Cartier structures on the direct summands of the associated graded module induced by differential operators}
\label{DModApproach}

Throughout this section we assume that $R$ is a regular ring essentially of finite type over an $F$-finite field. Regularity of $R$ is critical since we need that $F_\ast^e R$ is a flat $R$-module to ensure that ${F^e}^!$ is exact.

The goal of this section is to show that the Cartier structures defined on the direct summands of the associated graded module of $\tau(M, f^\lambda)$ at the beginning of Section \ref{SectionNilpotenceAssGraded} correspond to the Cartier structures obtained on quotients of generalized eigenspaces of certain $\mathcal{D}^e_R[\theta_1, \theta_p, \ldots, \theta_{p-1}]$-modules associated to the data $(M, f)$ for varying $e$. In particular, we will recover and generalize results of Bitoun (\cite{bitounbernsteinsatoposchar}).

We start by recalling the necessary $\mathcal{D}$-module theoretic notions (see \cite{mustatabernsteinsatopolynomialspositivechar} and \cite{blicklestaeblerbernsteinsatocartier} for more elaborate discussions). For any ring $R$ containing $\mathbb{F}_p$ the ring of $\mathbb{F}_p$-linear differential operators $\mathcal{D}_R \subseteq \End_{\mathbb{F}_p}(R)$ in the sense of Grothendieck (\cite[Definition 16.8.1]{EGAIV-4}) admits the so-called \emph{$p$-filtration} $\mathcal{D}_R = \bigcup_{e \geq 0} \mathcal{D}^e_R$, where $\mathcal{D}^e_R \cong \End_R(F^e_\ast R)$ (see \cite{chasepfiltration}).

\begin{Bsp}
If $R = k[t_1, \ldots, t_n]$ is a polynomial ring over a perfect field $k$, then $\mathcal{D}_R^e = R[\partial_{t_i}^{[1]}, \ldots, \partial_{t_i}^{[p^e -1]} \, \vert \, i =1, \ldots, n]$, where the $\partial_{t_i}^{[j]}$ are \emph{divided power operators} that act as follows \[\partial_{t_i}^{[j]} \bullet (t_1^{a_1} \cdots t_n^{a_n}) = \binom{a_i}{j} t_1^{a_1} \cdots t_{i-1}^{a_{i-1}} t_i^{a_i - j} t_{i+1}^{a_{i+1}} \cdots t_n^{a_n}.\] Moreover, one has $[\partial_{t_i}^{[a_i]}, \partial_{t_j}^{[a_j]}] = 0$ for $i \neq j$. 

As an aside we also mention that since $k$ is perfect $\mathcal{D}_R$ coincides with $k$-linear differential operators.
\end{Bsp}

\begin{Konv}
\label{konv.rightmodules}
Unless otherwise specified modules over rings of differential operators will always be \emph{right} modules.
\end{Konv}

Recall that since the Frobenius morphism is finite the upper shriek functor ${F^e}^!$ is given by  \[ {F^e}^!M \coloneqq \Hom_R(F_\ast^e R, M)\] for any $R$-module $M$. The module ${F^e}^! M$ naturally carries a right action of $\mathcal{D}^e_R \cong \End_R(F_\ast^e R)$ via \[{F^e}^! M \ni \varphi \longmapsto \varphi \circ f\] for any $f \in \End_R(F_\ast^e R)$.

\begin{Def}
\label{AdjointMap}
For any $e \geq 1$ there is a natural isomorphism \[\Sigma\colon \Hom_R(F^e_\ast M, M) \longrightarrow \Hom_R(M, {F^e}^! M), \quad \varphi \longmapsto [ m \mapsto (r \mapsto  \varphi(rm))].\] In this way, if $(M, \kappa)$ is a Cartier module, then the map $\kappa^e$ is equivalent to the map $\Sigma(\kappa^e)$. We will refer to the map $\Sigma(\kappa^e)$ as the \emph{adjoint} of $\kappa^e$ (and vice versa $\kappa^e$ is the adjoint of $\Sigma(\kappa^e)$.
\end{Def}

Let us now fix some assumptions and notations that shall be in force for the rest of this section:
\begin{Nota}
Recall that $R$ is a ring that is regular and essentially of finite type over an $F$-finite field $k$. We fix an element $f$ in $R$ and consider its so-called \emph{graph embedding}
\[ \gamma\colon \Spec R \longrightarrow \Spec R[t],\quad t \longmapsto f.\]

If $(M, \kappa)$ is a Cartier module, then $\gamma_\ast M$ also naturally carries a Cartier structure via \[F_\ast \gamma_\ast M \xrightarrow{\cong} \gamma_\ast F_\ast M \xrightarrow{\gamma_\ast \kappa} \gamma_\ast M\] where the first isomorphism is the natural isomorphism $F_\ast \gamma_\ast \cong \gamma_\ast F_\ast$. We will, by abuse of notation, denote this map again by $\kappa$.
\end{Nota}

\begin{Def}
Note that $\mathcal{D}^e_{R[t]}$ contains the differential operator $\theta_{a} \coloneqq t^{a} \partial_{t}^{[a]}$ for any $a < p^{e}$. In fact, $\mathcal{D}^e_{R[t]} = \mathcal{D}^e_R[t, \partial_t^{[a]} \, \vert \, a < p^e]$. We call the $\theta_a$ the \emph{(higher divided power) Euler operators}.
\end{Def}

\begin{Le}
\label{directedsystem}
Let $(M, \kappa)$ be a Cartier module on $R$ and $\gamma\colon \Spec R \to \Spec R[t], t \mapsto f$ the graph embedding. Then the adjoint of the Cartier structure ${F^{e}}^!\gamma_\ast M \to {F^{e+a}}^! \gamma_\ast M$ is given by \begin{align*}\Hom_{R[t]}(F_\ast^e R[t], \gamma_\ast M) &\longrightarrow \Hom_{R[t]}(F_\ast^{a+e} R[t], \gamma_\ast M) \\ \varphi &\longmapsto \kappa^a \circ F_\ast^a \varphi. \end{align*}
\end{Le}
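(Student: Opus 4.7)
The plan is to unwind the statement via the adjunction $F^a_\ast \dashv {F^a}^!$ together with the natural iso ${F^e}^! \circ {F^a}^! \cong {F^{e+a}}^!$. First I observe that the Cartier structure on $\gamma_\ast M$ is the pushforward $\gamma_\ast \kappa^a : F^a_\ast \gamma_\ast M \to \gamma_\ast M$, using the natural iso $F^a_\ast \gamma_\ast \cong \gamma_\ast F^a_\ast$ (which holds because $\gamma$ is a closed immersion, hence affine). By construction the induced transition map ${F^e}^! \gamma_\ast M \to {F^{e+a}}^! \gamma_\ast M$ is obtained by applying the functor ${F^e}^!$ to the adjoint $\kappa^{a,\flat} : \gamma_\ast M \to {F^a}^! \gamma_\ast M$ of $\kappa^a$, and then invoking the canonical isomorphism ${F^e}^! {F^a}^! \gamma_\ast M \cong {F^{e+a}}^! \gamma_\ast M$.

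Next I make the latter iso fully explicit as the hom-side of the $(F^a_\ast, {F^a}^!)$-adjunction, namely
\[ \Hom_{R[t]}(F_\ast^e R[t], {F^a}^! \gamma_\ast M) \;\cong\; \Hom_{R[t]}(F_\ast^a F_\ast^e R[t], \gamma_\ast M) \;=\; \Hom_{R[t]}(F_\ast^{e+a} R[t], \gamma_\ast M), \]
under which a map $g$ corresponds to $\epsilon \circ F^a_\ast g$, where $\epsilon : F^a_\ast {F^a}^! \gamma_\ast M \to \gamma_\ast M$ denotes the counit. Starting from $\varphi \in {F^e}^! \gamma_\ast M$, the induced map first sends $\varphi$ to $\kappa^{a,\flat} \circ \varphi$, and this then corresponds under the above iso to
\[ \epsilon \circ F^a_\ast(\kappa^{a,\flat} \circ \varphi) \;=\; \bigl(\epsilon \circ F^a_\ast \kappa^{a,\flat}\bigr) \circ F^a_\ast \varphi. \]
The triangle identity for $F^a_\ast \dashv {F^a}^!$ gives $\epsilon \circ F^a_\ast \kappa^{a,\flat} = \kappa^a$ (this is exactly the recipe that recovers a morphism from its adjoint). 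Combining yields the desired formula $\varphi \mapsto \kappa^a \circ F^a_\ast \varphi$.

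The only non-formal ingredient is the identification of the canonical iso ${F^e}^! {F^a}^! \cong {F^{e+a}}^!$ with the hom-side of the $(F^a_\ast, {F^a}^!)$-adjunction. This is a routine consequence of the decomposition $F^{e+a}_\ast = F^a_\ast \circ F^e_\ast$ and of the functoriality of $!$, and so the ``main obstacle'' is purely bookkeeping: one must keep the two indices $e$ and $a$ straight and verify that the natural iso does transport to the stated hom-side expression. Given that the triangle identity then produces $\kappa^a$ immediately, no deeper argument is required.
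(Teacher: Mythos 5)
Your proof is correct and follows essentially the same route as the paper's: both derive the formula by passing to the adjoint $\gamma_\ast M \to {F^a}^! \gamma_\ast M$, applying ${F^e}^!$, and identifying ${F^e}^!{F^a}^! \cong {F^{e+a}}^!$; you carry out the final step abstractly via the counit and the triangle identity, whereas the paper writes the maps element-wise. One small but genuine observation: your derivation correctly produces $\kappa^a \circ F_\ast^a \varphi$ (the pushforward of $\varphi$ along $F^a$, yielding a map $F_\ast^{e+a} R[t] \to F_\ast^a \gamma_\ast M$ which is then post-composed with $\kappa^a$), whereas the statement as printed reads $\kappa^a \circ F_\ast^e \varphi$; the latter does not type-check as a morphism from $F_\ast^{e+a} R[t]$ to $\gamma_\ast M$, so the exponent in the statement appears to be a typographical slip, and your $F_\ast^a$ is the right one.
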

\begin{proof}
First of all, note that duality of finite morphisms yields a map \[\gamma_\ast M \longrightarrow {F^a}^! \gamma_\ast M,\quad m \longmapsto [s \mapsto \kappa^a(\gamma^\#(s)m)],\] where $\gamma^{\#}\colon R[t] \to R, t \mapsto f$, corresponding to $F_\ast^a \gamma_\ast M \to \gamma_\ast M$. Applying ${F^e}^!$ (which is just a hom-functor) we get a map \[{F^{e}}^! \gamma_\ast M \longrightarrow {F^{e}}^! {F^a}^! \gamma_\ast M, \quad\varphi \longmapsto [r \mapsto [s \mapsto \kappa^a(\gamma^\#(r)(\varphi(s)))]].\] Now by tensor-hom adjunction we have ${F^e}^! {F^a}^! \cong {F^{e+a}}^!$ which sends a map as above to the map $[s \mapsto \kappa^a(\varphi(\gamma^\#(s)))]$.
\end{proof}

Quite generally, given any $\mathcal{D}^e_R[\theta_1, \theta_p, \ldots, \theta_{p^{e-1}}]$-module $A$ there is a unique decomposition
\[ A = \bigoplus_{i \in \mathbb{F}_p^e} E_i,\] where the $E_i$ are $\mathcal{D}_R$-modules and $\theta_{p^{j-1}}$ acts on $E_i$ by multiplication with $i_j$ for $j=1, \ldots, e$. We call this the decomposition into \emph{generalized eigenspaces}. This generalized eigenspace decomposition is preserved by morphisms. Moreover, it is compatible with the $p$-filtration in the following sense: $A$ is in particular a $\mathcal{D}^{e-1}_R[\theta_1, \theta_p, \ldots, \theta_{p^{e-2}}]$-module. As such it admits a decomposition \[ A = \bigoplus_{i \in \mathbb{F}_p^{e-1}} E_i\] into generalized eigenspaces. In this case, if we fix a tuple $i \in \mathbb{F}_p^{e-1}$, then \[E_i = \bigoplus_{j \in \mathbb{F}_p} E_{i,j},\] where on the right $E_{i,j}$ is the generalized eigenspace $(i_1, \ldots, i_{e-1}, j)$ obtained from the generalized eigenspace decomposition of $A$ considered as a $\mathcal{D}^e_R[\theta_1, \theta_p, \ldots, \theta_{p^{e-1}}]$-module. The proof of these facts are largely formal and rely on some properties of the $\theta_j$. We refer the reader to \cite[Lemma 3.1]{blicklestaeblerbernsteinsatocartier} or \cite[Proposition 4.2]{mustatabernsteinsatopolynomialspositivechar} for details.

Fix a Cartier module $(M, \kappa)$. Recall that ${F^e}^! \gamma_\ast M = \Hom_{R[t]}(F_\ast^e R[t], \gamma_\ast M)$ admits a $\mathcal{D}^e_{R[t]}$-module structure by premultiplication (see the discussion after \ref{konv.rightmodules}) and hence in particular a $\mathcal{D}^e_R[\theta_1, \theta_p, \ldots, \theta_{p^{e-1}}]$-module structure.
As such it admits a decomposition into generalized eigenspaces ${F^e}^! \gamma_\ast M = \bigoplus_{i \in \mathbb{F}_p^e} E_{i}$, where $\theta_{p^{j-1}}$ acts on $E_i$ by multiplication with $i_j$.

We now want to describe this generalized eigenspace structure in more detail. Given $t^m \in R[t]$ the operator $\theta_{p^a}$ acts via $\binom{m}{p^a} t^{m}$. By Lucas' Theorem  (see \cite[\S XXI]{lucasbinom}), if we write $m = i_1 + i_2 p + \cdots + i_e p^{e-1}$, then $\binom{m}{p^a}$ evaluates to $i_{a+1}$ (which, by convention, is zero if $a > e$).
Thus writing $i = (i_1, \ldots, i_e)$ we see that the projection onto the generalized eigenspace \[{F^e}^! \gamma_\ast M \longrightarrow E_i\] is induced by \[\pi\colon F_\ast^e R[t] \longrightarrow F^e_\ast R[t],\quad r t^m \longmapsto \begin{cases}r t^m, &  m = i_1 +i_2p + \ldots + i_e p^{e-1},\\ 0,& \text{else.}\end{cases}\]

Write $\gamma^{\#}$ for the map $R[t] \to R, t \mapsto f$ and note that  one has a natural map \[\gamma_\ast {F^e}^! M \longrightarrow {F^e}^! \gamma_\ast M, \quad \varphi \longmapsto \varphi \circ \gamma^{\#}.\] Recall (Definition \ref{AdjointMap}) that $\kappa^e\colon F_\ast^e M \to M$ corresponds to a map $C^e\colon M \to {F^e}^! M$ given by $[m \mapsto [r \mapsto \kappa^e(rm)]]$. By abuse of notation we will denote the image of  $C^e(M) = \{ [r \mapsto \kappa^e(rm) \vert m \in M\}$ under the natural map $\gamma_\ast {F^e}^!M \to {F^e}^! \gamma_\ast M$ by $\gamma_\ast C^e(M)$. This is an $R[t]$-submodule of the $\mathcal{D}_R^e[\theta_1, \theta_p, \ldots, \theta_{p^{e -1}}]$-module $F^e \gamma_\ast M$.

A key technical result (\cite[Corollary 5.3]{blicklestaeblerbernsteinsatocartier}) yields that for all $e \geq 1$ the generalized $(i_1, \ldots, i_e)$-eigenspace of the quotient \[N_e \coloneqq (\gamma_\ast C^e(M))\mathcal{D}^e_R[\theta_1, \theta_p, \ldots, \theta_{p^{e-1}}]/ (\gamma_\ast C^e(fM))\mathcal{D}^e_R[\theta_1, \theta_p, \ldots, \theta_{p^{e-1}}]\] is isomorphic as a $\mathcal{D}^e_R$-module to \[ P_e \coloneqq C^e(f^{i_1 + i_2p + \ldots + i_ep^{e-1}} M) \cdot \mathcal{D}^e_R/C^e(f^{1 + i_1 + i_2 p + \ldots + i_e p^{e-1}}M) \cdot \mathcal{D}^e_R.\] 
This isomorphism is given as follows. Given $\varphi \in N_e$ that is contained in the generalized $(i_1,\ldots,i_e)$-eigenspace one has $\varphi(rt^m) = 0$ for all $m < p^e$ if $m \neq i_1 + i_2p + \ldots +i_ep^{e-1}$. The image of $\varphi$ under this isomorphism is the map $r \mapsto \varphi(rf^{i_1 + i_2 p + \ldots +i_e p^{e-1}})$.

\begin{Theo}
\label{BSPGeneral1}
Let $R$ be regular essentially of finite type over an $F$-finite field. Let $(M, \kappa)$ be an $F$-regular Cartier module and $f$ an $M$-regular element. Then for all $e \geq 1$ the generalized $(i_1,\ldots, i_e)$-eigenspace of $N_e$ is isomorphic as a $\mathcal{D}^e_R$-module to \[{F^e}^! Q_i M \coloneqq {F^e}^! (\tau(M, f^{(i_1 + i_2p + \ldots + i_ep^{e-1})/p^e})/\tau(M, f^{(1+i_1 + i_2p + \ldots +i_e p^{e-1})/p^e})) \] where $i = (i_1, \ldots, i_e)$.
\end{Theo}
\begin{proof}
Since $N_e \cong P_e$ it suffices to argue that that we have an isomorphism with $P_e$.
Recall that ${F^e}^!$ is just $\Hom_R(F_\ast^e R, \bullet)$. In particular, for an $R$-submodule $A$ of $M$ we have a natural inclusion $\Hom_R(F_\ast^eR, A) \to \Hom_R(F_\ast^e, M)$. Since $M$ is a Cartier module we have the map $C^e\colon M \to {F^e}^!M$ which is adjoint to $\kappa^e$ (see Definition \ref{AdjointMap}). Now \cite[Lemma 4.6]{blicklestaeblerbernsteinsatocartier} identifies ${F^e}^! \kappa^e A$ with the $\mathcal{D}^e_R$-submodule of ${F^e}^! M$ generated by $C^e(A)$.

If we apply this observation with $A = f^{i_1 + i_2 p + \cdot + i_e p^{e-1}}M$ we thus obtain that 
\[C^e(f^{i_1 + i_2 p + \cdot + i_e p^{e-1}}M) \cdot \mathcal{D}^e_R = {F^e}^! \kappa^e f^{i_1 + i_2 p + \cdot + i_e p^{e-1}}M.\]

By our assumption, $M = \tau(M, f^0)$ so that, via Lemma \ref{CartierTauDivisionByP} and Skoda (\cite[Proposition 4.4]{blicklestaeblerfunctorialtestmodules}) the right-hand side is equal to ${F^e}^! \tau(M, f^{(i_1 + i_2p + \cdots +i_e p^{e-1})/p^e})$.

Since $R$ is regular, $F_\ast^e R$ is projective as an $R$-module. Thus ${F^e}^!$ is exact and the proof complete.
\end{proof}

\begin{Def}[{\cite[Definition 3.7]{blicklestaeblerbernsteinsatocartier}}]
\label{def.bernsteinsato}
Let $(M, \kappa)$ be a Cartier module and $f$ an $M$-regular element. Then we define the $e$th Bernstein-Sato polynomial $b^e_{M,f}[s] \in \mathbb{Q}[s]$ associated to $(M, \kappa, f)$ as follows: Let $\Gamma \subseteq \{0, \ldots, p-1\}^e$ be the set of those $i =(i_1, \ldots, i_e) \in \mathbb{F}_p^e$ for which the generalized $i$-eigenspace of $N_e$ is non-trivial. Then we set \[b^e_{M,f}[s] = \prod_{i \in \Gamma^e} \left(s - \left( \frac{i_e}{p} + \cdots +\frac{i_1}{p^e}\right) \right), \] where we lift $\mathbb{F}_p^e$ to $\mathbb{Z}^e$ by choosing representatives in $\{0, \ldots, p-1\}^e$.
\end{Def}

Recall that any $\lambda \in [0,1)$ can be written uniquely as \[ \sum_{i\geq1} \frac{c_i(\lambda)}{p^i},\] with all $c_i(\lambda) \in \{0, \ldots, p-1\}$ and such that infinitely many of the $c_i(\lambda)$ are non-zero. We will refer to this as the \emph{$p$-adic expansion of $\lambda$}. Moreover, one has \[\frac{\lceil \lambda p^e\rceil -1}{p^e} = \sum_{i=1}^e \frac{c_i(\lambda)}{p^i}.\] We call this the \emph{truncated $p$-adic expansion of $\lambda$}.

\begin{Ko}
\label{BernsteinSatoExtension}
Let $R$ be regular essentially of finite type over an $F$-finite field. Let $(M, \kappa)$ be an $F$-regular Cartier module and $f$ an $M$-regular element. Then the roots of the Bernstein-Sato polynomials $b^e_{M,f}(s)$ are given for all $e \gg 0$ by $\frac{\lceil \lambda p^e \rceil -1}{p^e}$, where $\lambda$ varies over the $F$-jumping number of the test module filtration $\tau(M, f^\mu)$ for $\mu \in (0,1]$.
\end{Ko}
\begin{proof}
By definition, $\lambda$ is an $F$-jumping number if and only if $\tau(M, f^{\lambda}) \neq \tau(M, f^{\lambda - \eps})$ for all $0 < \eps \ll 1$. Using right-continuity of $\tau$, the fact that $\frac{\lceil \lambda p^e\rceil}{p^e} \geq \lambda$ and $\frac{\lceil \lambda p^e\rceil}{p^e} \to \lambda$ for $e \to \infty$ we equivalently have
\[\tau(M, f^{\frac{\lceil \lambda p^e\rceil -1}{p^e}}) \neq \tau(M, f^{\frac{\lceil \lambda p^e\rceil}{p^e}})\] for all $e \gg 0$. Writing \[\frac{\lceil \lambda p^e \rceil -1}{p^e} = \frac{c_e(\lambda) + c_{e-1}(\lambda) + \cdots + c_1(\lambda) p^{e-1}}{p^e}\] using Theorem \ref{BSPGeneral1} and the fact, that ${F^e}^!$ is fully faithful we obtain that the generalized $(c_e(\lambda), \ldots, c_1(\lambda))$-eigenspace of $N_e$ is non-trivial for all $e \gg 0$ if and only if $\lambda$ is an $F$-jumping number.
\end{proof} 

\begin{Bem}
Of course, the point is that $F$-regularity here is meant in the sense of  \cite{blicklestaeblerfunctorialtestmodules} which is a strictly weaker notion than the one used in \cite{blicklestaeblerbernsteinsatocartier}.

Moreover, the use of Lemma \ref{CartierTauDivisionByP} and Skoda in Theorem \ref{BSPGeneral1} removes an inaccuracy in the proof of \cite[Theorem 5.4]{blicklestaeblerbernsteinsatocartier}. Indeed, this guarantees an equality $\kappa^e f^m M = \tau(M, f^{\frac{m}{p^e}})$ for any $m \in \mathbb{Z}$ and $e \geq 1$. The application of \cite[Corollary 4.8]{blicklestaeblerbernsteinsatocartier} in the proof of \cite[Theorem 5.4]{blicklestaeblerbernsteinsatocartier} is not correct since one may have to enlarge the numerator.

We will prove a partial generalization of this result where we relax the $F$-regularity assumption in Section \ref{BspSigmaSection}.
\end{Bem}

Corollary \ref{BernsteinSatoExtension} implies that the limit $b_{M,f}(s) \in \mathbb{Q}[s]$ of the sequence $(b^e_{M,f}(s))_e$ exists and that $b_{M,f}(s)$ is the product over the $s - \lambda$ where $\lambda \in [0,1)$ runs over the $F$-jumping numbers of $(M, \kappa)$ filtered along $f$. The sequence of modules ${F^e}^! \gamma_\ast M$ form a direct system (see Lemma \ref{directedsystem}). In this way one obtains a $\mathcal{D}_{R[t]}$-module $\colim_{e \geq 0} {F^{e}}^! \gamma_\ast M$. Denote the natural map $M \to \colim_{e \geq 0} {F^{e}}^! \gamma_\ast M$ by $\varphi_0$. Then one could replace $N_e$ by considering the quotient
\[ \varphi_0(M)\mathcal{D}_{R[t]}/\varphi_0(fM)\mathcal{D}_{R[t]}.\]
It was observed in \cite[Example 6.15]{mustatabernsteinsatopolynomialspositivechar} that one loses information in this way. In \cite{bitounbernsteinsatoposchar} it was shown that for $M = R$ the Bernstein-Sato polynomial one obtains by working in the limit only records those $F$-jumping numbers whose denominator is not divisible by $p$. We will extend this to $F$-regular Cartier modules $(M,\kappa)$.

Once again we remind the reader about the $N_e$ which were defined just before Theorem \ref{BSPGeneral1}.
\begin{Prop}
\label{EigenspaceRestrictionCartier}
Let $M$ be a Cartier module on $R$ and $f \in R$ a non-zero-divisor on $M$. Then the transition map $\gamma_\ast {F^{e}}^! M \to {F^{e+a}}^! \gamma_\ast M$ induces a map $N_{e} \to N_{e+a}$. Furthermore, if $E_i$ is the generalized $(i_1, \ldots, i_e)$-eigenspace of $N_e$ and $E_{i,j}$ the generalized $(i_1, \ldots, i_e, j_{e+1}, \ldots, j_{e+a})$- eigenspace of $N_{e+a}$ then we get an induced map
\[\alpha\colon \begin{xy}  \xymatrix@1{E_i \ar[r]^\iota & N_e \ar[r]& N_{e+a} \ar[r]^{\pi} & E_{i,j},}\end{xy}\] where $\pi$ is the projection onto the generalized eigenspace and $\iota$ is the natural inclusion.

If, in addition, $M$ is $F$-regular, then via the isomorphisms $E_i \to {F^e}^! Q_i M$, $E_{i,j} \to {F^{e+a}}^! Q_{i,j} M$ (cf.\ the discussion preceding and including Theorem \ref{BSPGeneral1}) and adjunction we get an induced Cartier structure $F_\ast^{a} {F^e}^! Q_i M \to {F^e}^! Q_i M$ which is given by \[\varphi \longmapsto \kappa^a f^{j_{e+1} + j_{e+2} p + \ldots + j_{e+a}p^{e+a-1}} \circ \varphi.\]
\end{Prop}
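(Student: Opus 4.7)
My approach is to establish three things in order: well-definedness of $N_e \to N_{e+a}$, compatibility with the relevant eigenspace decomposition (which yields $\alpha$), and in the $F$-regular case an explicit computation identifying the induced map with the claimed Cartier structure.

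Well-definedness and the existence of $\alpha$ are essentially formal. By the preceding lemma the transition $\varphi \mapsto \kappa^a \circ F_\ast^e \varphi$ sends $[r \mapsto \kappa^e(rm)] \in C^e(M)$ to $[r \mapsto \kappa^{e+a}(rm)] \in C^{e+a}(M)$, and the same argument applies to $C^e(fM) \to C^{e+a}(fM)$. Since the transition is a natural transformation of functors, it is $\mathcal{D}^e_{R[t]}$-linear with respect to the natural inclusion $\mathcal{D}^e_{R[t]} \hookrightarrow \mathcal{D}^{e+a}_{R[t]}$; in particular it is $\mathcal{D}^e_R[\theta_1,\theta_p,\ldots,\theta_{p^{e-1}}]$-linear. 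Thus the submodule of ${F^e}^!\gamma_\ast M$ generated over $\mathcal{D}^e_R[\theta_\bullet]$ by $\gamma_\ast C^e(M)$ is carried inside the corresponding submodule of ${F^{e+a}}^!\gamma_\ast M$, and likewise for the $fM$-versions, so the transition descends to $N_e \to N_{e+a}$. Linearity over $\mathcal{D}^e_R[\theta_1,\ldots,\theta_{p^{e-1}}]$ then forces $E_i$ to land inside the generalized $(i_1,\ldots,i_e)$-eigenspace of $N_{e+a}$ for that subring, which by the compatibility of eigenspace decompositions recalled earlier splits as $\bigoplus_{j \in \mathbb{F}_p^a} E_{i,j}$. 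Post-composing with the projection onto $E_{i,j}$ defines $\alpha$.

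For the $F$-regular case I would combine Theorem~\ref{TauFRegularDescription} with \cite[Corollary 5.3]{blicklestaeblerbernsteinsatocartier} to get the explicit isomorphisms $E_i \cong {F^e}^! Gr^i M$, $\varphi \mapsto \bar\varphi = [r \mapsto \varphi(rt^{m_i})]$, where $m_i = i_1 + i_2 p + \ldots + i_e p^{e-1}$, and analogously $E_{i,j} \cong {F^{e+a}}^! Gr^{i,j} M$ via evaluation at $t^{n_{i,j}}$, with $n_{i,j} = m_i + k\,p^e$ and $k = j_{e+1} + j_{e+2}p + \ldots + j_{e+a}p^{a-1}$. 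Following $\varphi \in E_i$ through $E_i \hookrightarrow N_e \to N_{e+a} \twoheadrightarrow E_{i,j}$ produces the element $r \mapsto \kappa^a(\varphi(rt^{n_{i,j}}))$. The decisive step is to rewrite $rt^{n_{i,j}}$ inside $F_\ast^e R[t]$: by the definition of the $R[t]$-module structure on $F_\ast^e R[t]$ one has $(g^{p^e}x)_{F_\ast^e R[t]} = g \cdot (x)_{F_\ast^e R[t]}$, so with $g = t^k$ and $x = rt^{m_i}$ we obtain $rt^{n_{i,j}} = t^k \cdot (rt^{m_i})$ in $F_\ast^e R[t]$. Applying $\varphi$, which is $R[t]$-linear, and using that $t$ acts on $\gamma_\ast M$ by multiplication by $f$, yields $\varphi(rt^{n_{i,j}}) = f^k\,\bar\varphi(r)$. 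Under the identification ${F^{e+a}}^! \cong {F^e}^!\circ {F^a}^!$ and the tensor-hom adjunction, $\alpha$ is therefore the map $\bar\varphi \mapsto \kappa^a f^k \circ \bar\varphi$, which is the asserted Cartier-type structure on ${F^e}^! Gr^i M$.

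The single non-formal point, and hence the main obstacle, is the last calculation: properly separating $t^k$ as an $R[t]$-action on $F_\ast^e R[t]$ rather than as an element of it, and verifying that the explicit isomorphism of \cite[Corollary 5.3]{blicklestaeblerbernsteinsatocartier} is compatible with the transition $\varphi \mapsto \kappa^a \circ F_\ast^e \varphi$ in the manner required above. Everything else—well-definedness of $N_e \to N_{e+a}$ and $\mathcal{D}^e_R[\theta_\bullet]$-equivariance—is a formal consequence of naturality of the transition and of the generalized eigenspace decomposition already recorded.
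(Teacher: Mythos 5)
Your proof is correct and follows essentially the same route as the paper: both rely on the explicit form of the eigenspace isomorphism $E_i\cong {F^e}^!Gr^iM$ via evaluation at $t^{m_i}$, trace $\varphi$ through the composition $E_i \to N_{e+a} \to E_{i,j}$, and extract $f^k$ (with $k=j_{e+1}+j_{e+2}p+\cdots+j_{e+a}p^{a-1}$) by using the $R[t]$-module structure on $F_\ast^e R[t]$. The one place where the two presentations differ is the justification that the projection onto $E_{i,j}$ interacts correctly with the transition map: you argue directly that evaluation at $t^{n_{i,j}}$ commutes with the source projection $\pi$ (since $\pi$ fixes $rt^{n_{i,j}}$), whereas the paper verifies the equivalent identity $\sum_{j}\psi_j=\kappa^a\varphi$ in $N_{e+a}$ by checking on the generators $rt^l$, $0\leq l<p^{e+a}$, and invoking uniqueness of the eigenspace decomposition; both arguments establish the same compatibility. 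You also correctly obtain the exponent $p^{a-1}$ in the final formula, matching the paper's proof (the statement's $p^{e+a-1}$ is a typographical slip).
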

\begin{proof}
Using Lemma \ref{directedsystem} we have a commutative diagram
\[\begin{xy} \xymatrix{\gamma_\ast {F^e}^! M \ar[r] \ar[d]& {F^e}^! \gamma_\ast M\ar[d]^\beta \\ \gamma_\ast {F^{e+a}}^! M \ar[r] & {F^{e+a}}^! \gamma_\ast M}\end{xy} \]
Thus we have $\beta(\gamma_\ast C^e(M)) = \gamma_\ast C^{e+a}(M)$. Since the map $\beta$ is $\mathcal{D}^e_R[\theta_1, \theta_p, \ldots, \theta_{p^{e-1}}]$-linear we obtain an inclusion
\[\beta(\gamma_\ast C^e(M)\mathcal{D}^e_R[\theta_1, \theta_p, \ldots, \theta_{p^{e-1}}]) \subseteq \gamma_\ast C^{e+a}(M) \mathcal{D}^{e+a}_R[\theta_1, \theta_p, \ldots, \theta_{p^{e+a-1}}] \] and an inclusion
\[\beta(\gamma_\ast C^e(fM)\mathcal{D}^e_R[\theta_1, \theta_p, \ldots, \theta_{p^{e-1}}]) \subseteq \gamma_\ast C^{e+a}(fM) \mathcal{D}^{e+a}_R[\theta_1, \theta_p, \ldots, \theta_{p^{e+a-1}}]. \] In this way we obtain the induced map $\alpha$.

Recall from the discussion preceding and including Theorem \ref{BSPGeneral1} that the isomorphism $E_i \to {F^e}^! Q_i M$ is given by \[\varphi \longmapsto [r \mapsto \varphi(r f^{i_1 +i_2p + \ldots i_ep^{e-1}})].\] Similarly, the image of $\alpha(\varphi)$ in ${F^{e+a}}^! Q_{i,j} M$ is given by \[r \longmapsto \pi \kappa^a \varphi(rf^{i_1 + i_2 p + \ldots i_e p^{e-1}}).\]

\begin{claim}
\label{claim.maps}
The image of $\alpha(\varphi)$ in ${F^{e+a}}^! Q_{i,j} M$ coincides with the map 
\[\psi_j\colon r \longmapsto \kappa^a \varphi(r f^{i_1 + i_2 p + \ldots i_e p^{e-1} + j_{e+1} p^e + \ldots j_{e+a}p^{e+a-1}}).\]
\end{claim}
\begin{proof}[proof of \ref{claim.maps}.]
Note that  $\{ rt^l \, \vert \, r \in R$ with $0 \leq l \leq p^{e+a} -1$\} is a set of generators for $F_\ast^{e+a} R[t]$.
Since $N_{e+a}$ is the direct sum of its generalized eigenspaces the claim comes down to verifying that the equality \[\sum_{j \in \mathbb{F}_p^a} \psi_j = \kappa^a \varphi\] holds in $N_{e+a}$, where we lift the $\psi_j$ to $N_{e+a}$ by taking the natural section of the projection onto the generalized eigenspace.

Hence, $\psi_j$ evaluated at \[t^{i_1 + i_2 p + \ldots + i_ep^{e-1} + l_{e+1} p^e + \ldots l_{e+a} p^{e+a -1}}\] coincides with \[\kappa^a\varphi(f^{i_1 + i_2 p + \ldots + i_ep^{e-1} + l_{e+1} p^e + \ldots l_{e+a} p^{e+a -1}})\] if $j = l$ and is zero otherwise. In particular, we see that the claimed identity holds.
\end{proof}

Identifying $\varphi$ with its image in ${F^e}^! Q_i M$ we may view $\varphi$ as a map $\varphi\colon F_\ast^e R \to M$. Then \[\psi_j(r) = \kappa^a f^{j_{e+1} + j_{e+2} p + \ldots j_{e+a} p^{a-1}} \varphi(f^{i_1 + i_2 p + \ldots + i_e p^{e-1}}r )\] which shows that the Cartier structure has the desired form.
\end{proof}

\begin{Bem}
We note that taking the projection onto the corresponding generalized eigenspace in Proposition \ref{EigenspaceRestrictionCartier} is necessary (see \cite[Example 6.15]{mustatabernsteinsatopolynomialspositivechar}).
\end{Bem}

If $\lambda \in (0,1]$ with truncated $p$-adic expansion \[\sum_{i=1}^e \frac{c_i(\lambda)}{p^i},\] then we have seen in the proof of Corollary \ref{BernsteinSatoExtension} that for all $e \gg 0$ the ${F^e}^! Q_{(c_e(\lambda), \ldots, c_1(\lambda))} M$ coincide with ${F^e}^! Gr^\lambda M$, where $Gr^\lambda M = \tau(M, f^{\lambda})/\tau(M, f^{\lambda - \eps})$.

\begin{Le}
\label{SufficientCartier}Let $(M, \kappa)$ be an $F$-regular Cartier module. For $e \gg 0$ the morphism $\psi$ given by the composition\[ \begin{xy}\xymatrix@1{{F^e}^! Gr^{\lambda} M \ar@/^1.5pc/[rrr]^{\psi}\ar[r]^-\cong& E_i \ar[r]^\alpha& E_{i,j} \ar[r]^-\cong & {F^{e+a}}^! Gr^{\lambda} M} \end{xy},\] where $\alpha$ is the map in Proposition \ref{EigenspaceRestrictionCartier}, is of the form ${F^e}^! C$ for a morphism $C\colon Gr^\lambda M \to {F^a}^! Gr^\lambda M$ if $\lambda (p^s -1)$ is an integer for some $s \in \mathbb{Z}$ and both $e,a$ are multiples of $s$.
\end{Le}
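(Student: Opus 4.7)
The plan is to show that the composition equals ${F^e}^!(C)$ where $C\colon Gr^\lambda M \to {F^a}^! Gr^\lambda M$ is the morphism adjoint to the Cartier structure $\kappa^a f^{\lambda(p^a-1)}$ on $Gr^\lambda M$. Under the hypothesis $s\mid a$ and $\lambda(p^s-1)\in\mathbb{Z}$ one has $p^s-1\mid p^a-1$, so $\lambda(p^a-1)\in\mathbb{Z}$ coincides with $\lceil \lambda(p^a-1)\rceil$ and Lemma \ref{Remarkable} confirms that $\kappa^a f^{\lambda(p^a-1)}$ really does give a Cartier structure on $Gr^\lambda M$. By the adjoint formula of the lemma preceding Proposition \ref{EigenspaceRestrictionCartier}, ${F^e}^!(C)$ then sends $\varphi$ to $\kappa^a f^{\lambda(p^a-1)} \circ F^a_\ast \varphi$, so everything reduces to verifying that the exponent $N := j_{e+1} + j_{e+2}p + \ldots + j_{e+a}p^{a-1}$ appearing in Proposition \ref{EigenspaceRestrictionCartier} equals $\lambda(p^a-1)$.

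To evaluate $N$, I use the compatibility of the $\lambda$-eigenspace labels. At level $e$ the label satisfies $\sum_{l=1}^e i_l p^{l-1} = \lceil \lambda p^e\rceil - 1$, and at level $e+a$ the label $(i_1,\ldots,i_e,j_{e+1},\ldots,j_{e+a})$ satisfies $\sum_{l=1}^{e+a} i_l p^{l-1} = \lceil \lambda p^{e+a}\rceil - 1$, where the first $e$ entries must coincide by the compatibility of the eigenspace decompositions recalled before Proposition \ref{EigenspaceRestrictionCartier}. Subtracting these two identities and dividing by $p^e$ yields $N = (\lceil \lambda p^{e+a}\rceil - \lceil \lambda p^e\rceil)/p^e$.

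Under the hypothesis both $\lambda(p^e-1)$ and $\lambda(p^{e+a}-1)$ are integers (since $p^s-1$ divides both $p^e-1$ and $p^{e+a}-1$), and for $\lambda \in (0,1)$ this gives $\lceil \lambda p^e\rceil = \lambda(p^e-1) + 1$ and $\lceil \lambda p^{e+a}\rceil = \lambda(p^{e+a}-1) + 1$. Plugging these in yields $N = (\lambda p^{e+a} - \lambda p^e)/p^e = \lambda(p^a-1)$; the same computation simultaneously confirms the compatibility $p^e \mid \lceil \lambda p^{e+a}\rceil - \lceil \lambda p^e\rceil$ invoked above. The main obstacle I anticipate is exactly this identification: Proposition \ref{EigenspaceRestrictionCartier} delivers $N$ as a partial $p$-adic digit sum that is a priori unrelated to the Cartier-theoretic quantity $\lambda(p^a-1)$ appearing in Lemma \ref{Remarkable}, and the two quantities match only because of the periodicity of $\lambda$'s $p$-adic expansion forced by $\lambda(p^s-1) \in \mathbb{Z}$ together with $s \mid e$ and $s \mid a$.
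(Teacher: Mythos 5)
Your proof is correct and follows essentially the same route as the paper: identify the composite (via Proposition \ref{EigenspaceRestrictionCartier}) with the adjoint of $\kappa^a f^{N}$ where $N=j_{e+1}+j_{e+2}p+\cdots+j_{e+a}p^{a-1}$, then show $N=\lambda(p^a-1)$. You derive this last identity by manipulating the ceiling formula $\lceil\lambda p^e\rceil=\lambda(p^e-1)+1$, whereas the paper invokes the $p$-adic digit formula $\lambda(p^s-1)=p^s\sum_{i=1}^s c_i(\lambda)p^{-i}$ and periodicity; these are the same underlying fact stated differently, so the proofs are substantively identical.
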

\begin{proof} There is an integer $s$ such that $\lambda(p^s -1)$ is an integer if and only if the denominator of $\lambda$ is not divisible by $p$. This is equivalent to the fact that the $p$-adic expansion of $\lambda$ is strictly periodic with period length dividing $s$. In this case, one has $\lambda(p^{s}-1) = p^{s} \sum_{i=1}^{s} \frac{c_i(\lambda)}{p^i}$ (see e.g.\ \cite[Lemma 2.6]{hbwzfthresholdshomogeneouspoly}). Write $e = e' s$ and $a = a' s$ Then by Proposition \ref{EigenspaceRestrictionCartier} and the previous observation the Cartier structure ${F^e}^! Gr^\lambda M \to {F^{e+a}}^! Gr^\lambda M$ is given by the adjoint of \[\kappa^a f^{\lceil \lambda (p^a -1) \rceil} = (\kappa^s f^{\lceil \lambda (p^s -1) \rceil})^{a'}\] and we may set \[C = [m \longmapsto [r  \mapsto (\kappa^s f^{\lceil \lambda (p^s-1) \rceil})^{e'}(rm)]].\]
\end{proof}

\begin{Bem}
Note that the Cartier structure on ${F^e}^! Gr^\lambda M$ is only giving us information on $Gr^\lambda M$ if we have a map $C$ as in Lemma \ref{SufficientCartier}. Quite generally, in this case, $C$ is a nil-isomorphism (see e.g.\ \cite[Lemma 2.2]{staeblerunitftestmoduln}) so that as crystals $Gr^\lambda M$ and ${F^e}^! Gr^\lambda M$ coincide. Otherwise ${F^e}^! Gr^\lambda M$ does, at least a priori, not encode more information than any other faithful functor.
\end{Bem}

We come to the main result of this section:

\begin{Theo}
\label{AnotherMainResult}
Let $R$ be regular and essentially of finite type over an $F$-finite field, $(M, \kappa)$ an $F$-regular Cartier module and $f$ an $M$-regular element. Assume that $\lambda \in \mathbb{Q}$ is an $F$-jumping number of the test module filtration of $M$ along $f$. Then for $e \gg 0$ we consider the map $C\colon {F^e}^! Gr^\lambda M \to {F^{e+a}}^! Gr^\lambda M$ (i.e.\ the morphism $\psi$ of Lemma \ref{SufficientCartier}).

If $\lambda (p^s -1) \in \mathbb{Z}$, then $C$ admits a factorization \[\begin{xy} \xymatrix{ Gr^\lambda M \ar[d]^{D^u} \ar[dr]^{D^v}& \\ {F^e}^!Gr^\lambda M \ar[r]^C & {F^{e+a}}^! Gr^\lambda M}\end{xy} \] where $D\colon Gr^\lambda M \to {F^s}^! Gr^\lambda M$ is adjoint to a map $\kappa^s f^c\colon F^s_\ast M \to M$, if and only if $s \mid e$ and $s \mid a$. In this case, $c = \lambda(p^s -1)$ and $\kappa^s f^c$ is not nilpotent.

If $\lambda(p^s -1) \notin \mathbb{Z}$, then there is a morphism $D\colon Gr^\lambda M \to {F^s}^! Gr^\lambda M$, that is adjoint to a map $\kappa^s f^c$, fitting into a commutative diagram as above if and only if $s \mid e$, $s = a$ and $\lceil \lambda (p^s - 1) \rceil$ is of the form $c_1 p^{s-1} + c_2 p^{s-2} + \ldots + c_s$, where $\frac{c_1}{p} + \frac{c_2}{p^2} + \ldots + \frac{c_s}{p^s}$ is the truncated $p$-adic expansion of $\lambda$. In this case, the morphism $D$ is nilpotent and given by the adjoint of $\kappa^s f^{\lceil \lambda(p^s -1) \rceil }$.
\end{Theo}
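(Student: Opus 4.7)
The approach is to translate the question into an arithmetic condition via the explicit description of Proposition~\ref{EigenspaceRestrictionCartier}. Under the eigenspace isomorphisms, $C: {F^e}^! Gr^\lambda M \to {F^{e+a}}^! Gr^\lambda M$ is the adjoint of $\kappa^a f^{N(e,a)}$, where $N(e,a)$ is the integer built from the digits of the $p$-adic expansion of $\lambda$ in positions $e+1,\ldots,e+a$. A morphism $D: Gr^\lambda M \to {F^s}^! Gr^\lambda M$ whose $(a/s)$-fold iterate induces $C$ via ${F^e}^!$ must, by Lemma~\ref{Remarkable}, be the adjoint of $\kappa^s f^K$ with $K \geq \lceil\lambda(p^s-1)\rceil$; using the Cartier algebra relation $f \kappa^s = \kappa^s f^{p^s}$, the $(a/s)$-fold composition is $\kappa^a f^{K(p^a-1)/(p^s-1)}$, so the matching condition becomes $N(e,a) = K(p^a-1)/(p^s-1)$.

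For Case~1, $\lambda(p^s-1) \in \mathbb{Z}$ is equivalent to the expansion of $\lambda$ being strictly periodic of period dividing $s$, and $\lambda(p^s-1)$ encodes a full canonical period of digits. If $s \mid e$ and $s \mid a$, the digit block at positions $e+1,\ldots,e+a$ consists of $a/s$ consecutive copies of the canonical period, so $N(e,a)$ telescopes to $\lambda(p^a-1)$ and the choice $K = \lambda(p^s-1)$ satisfies the matching, recovering the construction of Lemma~\ref{SufficientCartier}. Conversely, if $s \nmid e$ the block $(c_{e+1},\ldots,c_{e+s})$ is a nontrivial cyclic rotation of the canonical period and $N(e,s)$ then depends on $e \bmod s$; since $K$ must be a single integer but $e$ ranges over arbitrarily large values, no such $D$ induces all the transition maps. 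An analogous obstruction forces $s \mid a$. Non-nilpotence is then immediate from Theorem~\ref{TheoNilpotentGr}, as $K = \lambda(p^s-1)$ is exactly the critical non-nilpotent value.

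For Case~2, $\lambda(p^s-1) \notin \mathbb{Z}$ rules out strict $s$-periodicity, but the matching still forces $K = N(e,s)$, and Lemma~\ref{Remarkable} pins this to $K = \lceil\lambda(p^s-1)\rceil$; translating $N(e,s)$ into $p$-adic digits and using $\sum c_i/p^i = \lambda$ yields the stated arithmetic form $\lceil\lambda(p^s-1)\rceil = c_1 p^{s-1} + c_2 p^{s-2} + \ldots + c_s$. That $s = a$ (rather than merely $s \mid a$) is forced follows from comparing the double iterate $(\kappa^s f^K)^{\cdot 2} = \kappa^{2s} f^{K(1+p^s)}$ with the required exponent $N(e,2s) = N(e,s) + p^s N(e+s,s)$: equality demands $N(e+s,s) = K$, i.e.\ strict $s$-periodicity of the expansion past position $e$, which fails precisely because $\lambda(p^s-1) \notin \mathbb{Z}$. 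Nilpotence of $D$ follows from Theorem~\ref{TheoNilpotentGr} since $K = \lceil\lambda(p^s-1)\rceil$ strictly exceeds $\lambda(p^s-1)$. The main obstacle is disentangling this Case~2 arithmetic---in particular, the translation from the equality $K = N(e,s)$ to the claimed closed form for $\lceil\lambda(p^s-1)\rceil$, and the verification that iteration of $D$ fails whenever the expansion is not strictly $s$-periodic.
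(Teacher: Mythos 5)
Your high-level framework is the same as the paper's: you set up the matching condition between the exponent of the transition map $C$ and the exponent of a candidate morphism $\kappa^s f^K$, then invoke Lemma~\ref{Remarkable} to constrain $K\geq\lceil\lambda(p^s-1)\rceil$. The paper phrases this as a commutative triangle, which gives the equation $N(a)=u_s(\lambda)\frac{p^a-1}{p^s-1}$ and then analyzes it via a base-$p$ digit expansion. So far so good.

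However, there is a genuine gap in your identification of the exponent, which then invalidates two of your key steps. You assert that $C$ is the adjoint of $\kappa^a f^{N(e,a)}$ with $N(e,a)$ built from the digits of $\lambda$ in positions $e+1,\ldots,e+a$. This is not what comes out of Proposition~\ref{EigenspaceRestrictionCartier}. The eigenspaces of $N_e$ and $N_{e+a}$ that project to $Gr^\lambda M$ pick out the $t$-powers $\lceil\lambda p^e\rceil-1$ and $\lceil\lambda p^{e+a}\rceil-1$, written as $i_1+i_2p+\cdots$ with $i_1$ the low digit; since $\lceil\lambda p^e\rceil-1=c_1p^{e-1}+\cdots+c_e$, one finds $i_j=c_{e+1-j}$, and the new coordinates $j_{e+k}$ at level $e+a$ are the \emph{top} digits $c_{a+1-k}$. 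The resulting exponent is
\[
j_{e+1}+j_{e+2}p+\cdots+j_{e+a}p^{a-1}=c_a+c_{a-1}p+\cdots+c_1p^{a-1}=\lceil\lambda p^a\rceil-1,
\]
which involves only the \emph{first} $a$ digits of $\lambda$ and is independent of $e$. (One can check this e.g.\ with $p=3$, $\lambda=\tfrac14$, $e=2$, $a=2$: the exponent is $2$, not $c_3+c_4p=6$.) Consequently your ``cyclic rotation'' argument for the necessity of $s\mid e$ is based on a dependence on $e$ that does not exist; the actual reason $s\mid e$ and $s\mid a$ are forced is pure degree-bookkeeping in the commutative diagram (the left arrow must be $D^{e/s}$, the curved arrow $D^{(e+a)/s}$). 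Likewise the identity you use in Case~2, $N(e,2s)=N(e,s)+p^sN(e+s,s)$, is false under the correct description; what holds is $N(2s)=p^sN(s)+\bigl(c_{s+1}p^{s-1}+\cdots+c_{2s}\bigr)$, and equating this with $N(s)(1+p^s)$ requires $c_{s+k}=c_k$ for $k\leq s$. The paper avoids these pitfalls by writing the commutativity equation directly as a digit equation, establishing $m+\eps=0$ by a magnitude comparison after cancelling the first $s$ digits, and ruling out $a>s$ from failure of $s$-periodicity among the first $a$ digits. Your arithmetic conclusion $\lceil\lambda(p^s-1)\rceil=c_1p^{s-1}+\cdots+c_s$ does line up with the paper's $\eps=0$ condition, but the route you take to it is not sound as written.
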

\begin{proof}
We start with the case that $\lambda(p^s -1)$ is an integer. Clearly, if $C$ admits such a factorization then $us = e$ and $vs = e +a$. The converse is Lemma \ref{SufficientCartier}. To see that it is not nilpotent we may use Lemma \ref{Remarkable}.

We now turn to the case that $\lambda(p^s -1)$ is not an integer. Then it is clearly necessary that $s \mid e$ and $s \mid a$. If we write the $p$-adic expansion of $\lambda$ as $\lambda = \sum_{i \geq 1} c_i p^{-i}$, then the existence of such a morphism $\kappa^s f^{u_s(\lambda)}\colon Gr^\lambda M \to {F^s}^! Gr^\lambda M$ is equivalent to a commutative diagram of the form
\[\begin{xy} \xymatrixcolsep{7pc}\xymatrix{Gr^\lambda M \ar@/^3pc/[rr]^{\kappa^{e+a} f^{u_s(\lambda) \frac{p^{e+a} -1}{p^s-1}}} \ar[r]^{\kappa^e f^{u_s(\lambda) \frac{p^{e}-1}{p^s-1}}}& {F^e}^! Gr^\lambda M \ar[r]^{\kappa^a f^{c_a + c_{a-1}p + \ldots + c_1 p^{a-1}}}& {F^{e+a}}^! Gr^\lambda M.}\end{xy}\]
By Lemma \ref{Remarkable} we may write $u_s(\lambda) = \lceil \lambda( p^s -1)\rceil + m$ for some non-negative integer $m$. Using this, the diagram is commutative if and only if we have the following equality
\[(\lceil \lambda (p^s -1) \rceil + m) \frac{p^e -1}{p^s -1} p^a + c_a + c_{a-1} p + \ldots + c_1 p^{a-1} = (\lceil \lambda (p^s -1) \rceil +m) \frac{p^{e+a} -1}{p^s -1}.\]
This simplifies to
\[c_a + c_{a-1} p + \ldots + c_1 p^{a-1} = (\lceil \lambda(p^s -1) \rceil +m)\frac{p^a -1}{p^s -1}.\] Note that $\lceil \lambda(p^s -1) \rceil = c_1 p^{s-1} + c_2 p^{s-2} + \ldots + c_s + \eps$, where $\eps \in \{0,1\}$ depending on whether the fractional part of $\lambda(p^s -1)$ is negative or positive. Using this formula and writing $a =s \tilde{a}$ so that $\frac{p^a-1}{p^s-1} = \sum_{i=0}^{\tilde{a} -1} p^{is}$ we may further expand the equation to
\begin{equation*}\begin{split} c_a + c_{a-1} p + \ldots + c_1 p^{a-1} ={} &c_1 p^{a-1} + c_2 p^{a-2} + \ldots + c_s p^{a-s} + (m + \eps )p^{a-s} \\
& +c_1 p^{a-1-s}  + c_2 p^{a-2-s} + \ldots + c_s p^{a-2s} + (m + \eps) p^{a-2s}\\
& \,\,\,\vdots\\
& +c_1 p^{s-1} + c_2  p^{s-2} + \ldots + c_s + (m + \eps)\end{split}\end{equation*} where we have $\tilde{a}$ rows. The summands in the first row of the right hand side except the last one all occur on the left hand side. Subtracting this from both sides we observe that the left hand side is $< p^{a-s}$. Hence, if $m + \eps$ is positive, then equality cannot hold. Thus, we must have $m = \eps = 0$.

If $m = \eps = 0$, then the last summand of each row vanishes. If moreover $a =s$, then equality holds. If $s < a$, then there must exist $0 \leq i \leq s$ with $c_i \neq c_{rs + i}$ for some $1 \leq r < \tilde{a}$ . Hence, equality cannot hold.
\end{proof}

\begin{Bsp}
The second case of the theorem occurs for instance for any $F$-jumping number of the form $\frac{ap}{p^2 -1}$ with $a < p$ (and necessarily $p > 2$).

If the denominator of $\lambda$ is divisible by $p$ then the second case of the theorem can only occur if $\lambda = \frac{a}{p^nb}$ with $p \nmid b$ and $a > b$. Indeed, assume that $a \leq b$. Denote the coefficients of the $p$-adic expansion of $\lambda$ by $c_i$ and the coefficients of the expansion of $\frac{a}{b}$ by $d_i$.
Then $c_1 = \ldots = c_n =0$ and $c_{n+i} = d_i$ for $i \geq 1$. In particular, the fractional part of $\lambda(p^s-1)$ is then non-negative since it is of the form \[\sum_{i \geq 1} \frac{c_i}{p^{i-s}} - \sum_{i \geq 1}\frac{c_i}{p^i}.\] Since $\lambda(p^s -1)$ is not an integer by assumption its fractional part is strictly positive so that the second case of the theorem cannot occur.
\end{Bsp}

\begin{Que}
Are there $F$-pure thresholds which actually satisfy the second case of Theorem \ref{AnotherMainResult}?
\end{Que}

A possible strategy may be to consider quasi-homogeneous polynomials $f \in k[x_1, \ldots, x_n]$ whose Jacobian ideal coincides, up to radical, with the irrelevant ideal. For then a likely candidate for the $F$-pure threshold is the log canonical threshold \[\lct(f) = \min\{\frac{\sum \deg(x_i)}{\deg f}, 1\}.\] If $\fpt(f) \neq \lct(f)$ then the denominator of the $F$-threshold has to be a $p$th power (\cite[Theorem 3.5]{hbwzfthresholdshomogeneouspoly}). So one would need $f$ with $\fpt(f) = \lct(f)$.

Given a Cartier module $(M, \kappa)$ we have, after passing to the graph embedding, natural maps $\gamma_\ast M \to {F^e}^! \gamma_\ast M$. Taking the colimit over these maps we obtain $\colim_{e \geq 0} {F^e}^! \gamma_\ast M$ which is now a $\mathcal{D}_{R[t]}$-module. In fact, one has $\colim_{e \geq 0} {F^e}^! \gamma_\ast M = \gamma_+ \colim_{e \geq0} {F^e}^!M$, where $\gamma_+$ is the (right) $\mathcal{D}$-module pushforward (see \cite[Proposition 3.5]{blicklestaeblerbernsteinsatocartier}). Write $\varphi_0$ for the natural map $\gamma_\ast M \to \colim_{e \geq 0} {F^e}^! \gamma_\ast M$. Then, instead of studying the generalized eigenspaces of the $N_e$ (defined just before Theorem \ref{BSPGeneral1}), we may also study the generalized eigenspaces of the following quotient
\[S_e \coloneqq \varphi_0(\gamma_\ast M) \mathcal{D}^e_R[\theta_1, \theta_p, \ldots, \theta_{p^{e-1}}]/\varphi_0(\gamma_\ast fM) \mathcal{D}^e_R[\theta_1, \theta_p, \ldots, \theta_{p^{e-1}}]. \]
Loosely speaking, the ambient $\mathcal{D}^e_{R[t]}$-module ${F^e}^! \gamma_\ast M$ is replaced by the full $\mathcal{D}_{R[t]}$-module $\colim_{e \geq0} {F^e}^! \gamma_\ast M$ but we still only consider the $\mathcal{D}^e_R[\theta_1, \theta_p, \ldots, \theta_{p^{e-1}}]$-module generated by $\varphi_0(\gamma_\ast M)$ or $\varphi_0(\gamma_\ast fM)$.

\begin{Bem}
Working with $\colim_{e \geq 0} {F^e}^! \gamma_\ast M$ for $M = \omega_R$ is the setting of \cite{mustatabernsteinsatopolynomialspositivechar} (and then also to passing to left $\mathcal{D}$-modules). Indeed, note that by \cite[Proposition 3.5]{blicklestaeblerbernsteinsatocartier} one has $\colim_{e \geq 0} {F^e}^! M = \gamma_+ \colim_{e \geq 0} {F^e}^! M$ as right $\mathcal{D}_{R[t]}$-modules, where $\gamma_+$ is the $\mathcal{D}$-module pushforward. If $M = \omega_R$, then $\colim_{e \geq 0} {F^e}^! \omega_R \cong \omega_R$. Here, by convention, $\omega_R \to {F^e}^! \omega_R$ is induced via $f^!$ from a fixed isomorphism $k \to F^! k$, where $f\colon \Spec R \to k$ is the structural map\footnote{Of course, if $k$ is perfect, then $k \to F^! k$ is canonical and $\omega_R \to F^! \omega_R$ is the Cartier isomorphism.}. The equivalence between right and left $\mathcal{D}_{R[t]}$-modules then sends $\gamma_+ \omega_R$ to $\gamma_+ R = R[t]_{t-f}/R[t]$.
\end{Bem}

Our next goal is to point out that the generalized eigenspaces of the $N_e$ and of the $S_e$ contain the same information. This will be accomplished in two steps. First we prove this in the case that the maps $\gamma_\ast M \to {F^e}^! \gamma_\ast M$ are all injective (Theorem \ref{Herrgottssack} below). This can always be arranged by replacing $M$ by the nil-isomorphic object $\overline{M}$ (see Definition \ref{Mbar} below).

Then we show that given a nil-isomorphism $\varphi\colon M \to N$ of Cartier modules the restriction of $\varphi$ induces a map $Gr^\lambda M \to Gr^\lambda N$ which is then again a nil-isomorphism with respect to the Cartier structures defined in Lemma \ref{Remarkable}

Before we proceed, we need to recall one more notion from the theory of Cartier crystals.

\begin{Def}
\label{Mbar}
Given a a Cartier module $(M, \kappa)$ we can consider the union of all nilpotent submodules $M_{\text{nil}}$. This is again a nilpotent Cartier module and we define $\overline{M} = M/M_{\text{nil}}$. Note, in particular, that the natural projection $M \to \overline{M}$ is a nil-isomorphism.
\end{Def}

We note the following

\begin{Le}
\label{NilpotentsPushforward}
Let $i\colon \Spec R/I \to \Spec R$ be a closed immersion and $(M, \kappa)$ a Cartier module. Then $i_\ast \overline{M} = \overline{i_\ast M}$.
\end{Le}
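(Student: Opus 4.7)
The plan is to exploit the fact that for a closed immersion $i \colon \Spec R/I \to \Spec R$ the pushforward $i_\ast$ on quasi-coherent sheaves is nothing but restriction of scalars along the surjection $R \to R/I$. In particular $i_\ast$ is exact and fully faithful, and via the natural isomorphism $i_\ast F_\ast \cong F_\ast i_\ast$ (already used tacitly in the paper for the graph embedding) a Cartier structure on the $R/I$-module $M$ is \emph{the same datum} as a Cartier structure on the $R$-module $i_\ast M$. Under this identification, an $R$-submodule $N \subseteq i_\ast M$ is automatically an $R/I$-submodule, because $I$ annihilates $i_\ast M$, and the Cartier structure on $N$ viewed over $R$ coincides with the one viewed over $R/I$.

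The heart of the argument is therefore to verify the equality of nilpotent parts
\[
(i_\ast M)_{\text{nil}} \;=\; i_\ast (M_{\text{nil}}).
\]
For the inclusion $\supseteq$, if $\kappa^e$ annihilates a submodule of $M$, then applying $i_\ast$ (which commutes with the Cartier action) it still annihilates the corresponding submodule of $i_\ast M$, so $i_\ast M_{\text{nil}}$ is a nilpotent Cartier $R$-submodule of $i_\ast M$. For the inclusion $\subseteq$, let $N \subseteq i_\ast M$ be any nilpotent Cartier $R$-submodule. By the preceding paragraph, $N$ corresponds to a Cartier $R/I$-submodule of $M$, on which the same power of $\kappa$ acts as zero; hence $N$ is a nilpotent Cartier $R/I$-submodule of $M$, and therefore $N \subseteq M_{\text{nil}}$ under the identification.

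Granted this equality, the statement follows by applying the exact functor $i_\ast$ to the short exact sequence $0 \to M_{\text{nil}} \to M \to \overline{M} \to 0$:
\[
\overline{i_\ast M} \;=\; i_\ast M/(i_\ast M)_{\text{nil}} \;=\; i_\ast M / i_\ast M_{\text{nil}} \;=\; i_\ast(M/M_{\text{nil}}) \;=\; i_\ast \overline{M}.
\]

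There is no real obstacle; the only thing requiring care is the bookkeeping between ``Cartier $R$-submodule of $i_\ast M$'' and ``Cartier $R/I$-submodule of $M$,'' but this is handled in one line by the observation that $I$ annihilates $i_\ast M$ and that $i_\ast$ intertwines the two Cartier structures through $i_\ast F_\ast \cong F_\ast i_\ast$.
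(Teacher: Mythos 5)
Your argument is correct. The paper in fact states Lemma \ref{NilpotentsPushforward} without proof, treating it as an immediate consequence of the fact that pushforward along a closed immersion is exact restriction of scalars; your write-up supplies exactly the bookkeeping the paper leaves implicit. The two crucial observations you make are the right ones: (i) the isomorphism $i_\ast F_\ast \cong F_\ast i_\ast$ lets one identify the Cartier structure on $M$ over $R/I$ with the one on $i_\ast M$ over $R$, and (ii) because $I$ annihilates $i_\ast M$, the set of Cartier $R$-submodules of $i_\ast M$ is in bijection with the set of Cartier $R/I$-submodules of $M$, a bijection that preserves nilpotence; hence $(i_\ast M)_{\text{nil}} = i_\ast(M_{\text{nil}})$, and exactness of $i_\ast$ finishes it. One small point worth flagging: your phrasing ``if $\kappa^e$ annihilates a submodule of $M$'' tacitly uses that $M_{\text{nil}}$ is itself nilpotent (rather than merely a union of nilpotent submodules); this holds under the paper's standing hypothesis that Cartier modules are finitely generated over a Noetherian ring, but it is an ingredient you are relying on and could make explicit.
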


\begin{Theo}
\label{Herrgottssack}
Assume that $(M, \kappa)$ is an $F$-regular Cartier module satisfying $\overline{M} = M$. Write $\gamma_+ \mathcal{M} = \colim_{e \geq 0} {F^e}^! \gamma_\ast M$ and denote the natural maps ${F^e}^! \gamma_\ast M \to \gamma_+ \mathcal{M}$ by $\varphi_e$. Then for any $e$ the quotients  \[S_e = \varphi_0(\gamma_\ast M) \mathcal{D}^e_R[\theta_1, \theta_p, \ldots, \theta_{p^{e-1}}]/\varphi_0(\gamma_\ast fM) \mathcal{D}^e_R[\theta_1, \theta_p, \ldots, \theta_{p^{e-1}}] \] and \[N_e = \gamma_\ast C^e(M) \mathcal{D}^e_R[\theta_1, \theta_p, \ldots, \theta_{p^{e-1}}]/\gamma_\ast C^e(fM) \mathcal{D}^e_R[\theta_1, \theta_p, \ldots, \theta_{p^{e-1}}]\] are naturally isomorphic as $\mathcal{D}^e_R[\theta_1, \theta_p, \ldots, \theta_{p^e-1}]$-modules. In particular, we obtain $\mathcal{D}^e_R$-isomorphisms of generalized eigenspaces.
\end{Theo}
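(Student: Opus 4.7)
The plan is to show that the natural surjection $Q_2 \twoheadrightarrow Q_1$ induced by $\varphi_e$ is injective, surjectivity being automatic from the $\mathcal{D}^e_R[\theta_1,\theta_p,\ldots,\theta_{p^{e-1}}]$-linearity of $\varphi_e$ together with the very definition of the two quotients. Since both $Q_2$ and $Q_1$ admit generalized eigenspace decompositions for the higher Euler operators $\theta_1,\theta_p,\ldots,\theta_{p^{e-1}}$ and the induced map respects them, it suffices to verify injectivity on each $(i_1,\ldots,i_e)$-eigenspace.

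The essential input is the hypothesis $\overline{M}=M$. Combined with Lemma \ref{NilpotentsPushforward} this gives $\overline{\gamma_\ast M}=\gamma_\ast M$, so $\gamma_\ast M$ has no nonzero nilpotent Cartier submodules. I would deduce that the adjoint map $M\to {F^a}^!M$, sending $m\mapsto [r\mapsto \kappa^a(rm)]$, is injective for every $a\geq 0$: an element of the kernel generates a Cartier submodule on which a power of the structural map must vanish, hence a nilpotent Cartier submodule, which is then forced to be zero. Applied to the submodules $f^nM$, this shows that $\varphi_e$ restricted to each $\gamma_\ast C^e(f^nM)$ is injective.

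Using \cite[Corollary 5.3]{blicklestaeblerbernsteinsatocartier} together with Theorem \ref{TauFRegularDescription} (which applies since $M$ is $F$-regular), the $(i_1,\ldots,i_e)$-eigenspace of $Q_2$ is canonically identified as a $\mathcal{D}^e_R$-module with $P_e = C^e(f^{a_i}M)\mathcal{D}^e_R/C^e(f^{a_i+1}M)\mathcal{D}^e_R$, where $a_i = i_1+i_2p+\ldots+i_ep^{e-1}$, via the explicit formula $\varphi\mapsto [r\mapsto \varphi(rf^{a_i})]$. The core claim is that the \emph{same} identification applies to the $i$-eigenspace of $Q_1$: because $\varphi_e$ is injective on $\gamma_\ast C^e(f^{a_i}M)$ and on $\gamma_\ast C^e(f^{a_i+1}M)$ by the previous paragraph, and because it commutes with the $\mathcal{D}^e_R$-action, the explicit isomorphism transports verbatim through $\varphi_e$ to identify the $i$-eigenspace of $Q_1$ with the same $P_e$. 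Under these identifications $Q_2\to Q_1$ is the identity on every $P_e$ and the theorem follows. The main obstacle I anticipate is verifying cleanly that no extra relations are introduced upon applying $\varphi_e$, i.e.\ that $\ker\varphi_e$ does not meet the $(i_1,\ldots,i_e)$-eigenspace of $\gamma_\ast C^e(M)\mathcal{D}^e_R[\theta_1,\theta_p,\ldots,\theta_{p^{e-1}}]$ outside of the corresponding eigenspace of $\gamma_\ast C^e(fM)\mathcal{D}^e_R[\theta_1,\theta_p,\ldots,\theta_{p^{e-1}}]$; tracing through the explicit form of the eigenspace isomorphism from \cite[Corollary 5.3]{blicklestaeblerbernsteinsatocartier} should reduce this to the injectivity statements established in the second paragraph.
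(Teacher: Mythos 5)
Your starting move matches the paper and is correct: from $\overline{M}=M$ and Lemma \ref{NilpotentsPushforward} one gets $\overline{\gamma_\ast M}=\gamma_\ast M$, and hence that each adjoint map $\gamma_\ast M \to {F^a}^!\gamma_\ast M$ is injective. But you only convert this into injectivity of $\varphi_e$ on the pieces $\gamma_\ast C^e(f^nM)$, whereas what is actually needed is injectivity on the entire $\mathcal{D}^e_R[\theta_1,\ldots,\theta_{p^{e-1}}]$-submodule of ${F^e}^!\gamma_\ast M$ generated by $\gamma_\ast C^e(M)$. This is exactly the gap you flag at the end: nothing you prove excludes $\ker\varphi_e$ from meeting $\gamma_\ast C^e(M)\mathcal{D}^e_R[\theta_1,\ldots,\theta_{p^{e-1}}]$ outside of $\gamma_\ast C^e(fM)\mathcal{D}^e_R[\theta_1,\ldots,\theta_{p^{e-1}}]$, and the eigenspace-by-eigenspace reduction does not close it.

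The cure is simpler than the detour you propose, and it is what the paper does. Since $\gamma_\ast M \to {F^a}^!\gamma_\ast M$ is injective for every $a$, applying the exact functor ${F^e}^!$ (exact because $R[t]$ is regular, so $F^e_\ast R[t]$ is flat) shows that every transition map ${F^e}^!\gamma_\ast M \to {F^{e+a}}^!\gamma_\ast M$ in the directed system defining $\gamma_+\mathcal{M}$ is injective; hence each $\varphi_e$ is injective on all of ${F^e}^!\gamma_\ast M$, not merely on the sets $\gamma_\ast C^e(f^nM)$. Once $\varphi_e$ is globally injective, the induced map on quotients is automatically an isomorphism, and the $\mathcal{D}^e_{R[t]}$-linearity of $\varphi_e$ together with the factorization $\varphi_0=\varphi_e\circ\gamma_\ast C^e$ identifies $\varphi_e\bigl(\gamma_\ast C^e(M)\mathcal{D}^e_R[\theta_1,\ldots,\theta_{p^{e-1}}]\bigr)$ with $\varphi_0(\gamma_\ast C^e(M))\mathcal{D}^e_R[\theta_1,\ldots,\theta_{p^{e-1}}]$, and likewise with $fM$ in place of $M$. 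This route needs no eigenspace decomposition, no $F$-regularity, no Theorem \ref{TauFRegularDescription}, and no appeal to \cite[Corollary 5.3]{blicklestaeblerbernsteinsatocartier}; note moreover that the results you invoke from \cite{blicklestaeblerbernsteinsatocartier} are only stated for $e\gg 0$, while the theorem asserts the isomorphism for every $e$.
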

\begin{proof}
First of all, note that the natural map $\varphi_e\colon {F^e}^! \gamma_\ast M \to \gamma_+ \mathcal{M}$ is $\mathcal{D}^e_{R[t]}$-linear.

Next, observe that by Lemma \ref{NilpotentsPushforward} we have $\overline{\gamma_\ast M} = \gamma_\ast M$. It follows that the map $\gamma_\ast M\to {F^e}^! \gamma_\ast M$ is injective for all $e$ (cf. \cite[Lemma 6.20]{staeblertestmodulnvilftrierung}). Hence, the natural map $\varphi_0$ is also injective. Note that $\varphi_0$ factors as $\gamma_\ast M \xrightarrow{\gamma_\ast C^e} {F^e}^! \gamma_\ast M \xrightarrow{\varphi_e} \gamma_+ \mathcal{M}$ for any $e \geq 1$. Since $\varphi_e$ is $\mathcal{D}^e_{R[t]}$-linear we have \[\varphi_e(\gamma_\ast C^e(M) \mathcal{D}^e_R[\theta_1, \theta_p, \ldots, \theta_{p^{e-1}}]) = \varphi_0(\gamma_\ast C^e(M)) \mathcal{D}^e_R[\theta_1, \theta_p, \ldots, \theta_{p^{e-1}}],\] and a similar assertion holds for $\gamma_\ast C^e(fM)$.
We conclude that the quotients \[\varphi_0(\gamma_\ast C^e(M)) \mathcal{D}^e_R[\theta_1, \theta_p, \ldots, \theta_{p^{e-1}}]/\varphi_0(\gamma_\ast C^e(fM)) \mathcal{D}^e_R[\theta_1, \theta_p, \ldots, \theta_{p^{e-1}}] \] and \[\gamma_\ast C^e(M) \mathcal{D}^e_R[\theta_1, \theta_p, \ldots, \theta_{p^{e-1}}]/\gamma_\ast C^e(fM) \mathcal{D}^e_R[\theta_1, \theta_p, \ldots, \theta_{p^{e-1}}]\]are naturally isomorphic as $\mathcal{D}^e_R[\theta_1, \theta_p, \ldots, \theta_{p^e-1}]$-modules since $\varphi_0$ is injective. In particular, we obtain $\mathcal{D}^e_R$ isomorphisms of generalized eigenspaces.
\end{proof}

\begin{Prop}
Let $(M, \kappa_M)$, $(N, \kappa_N)$ be Cartier modules and $f \in R$ $M$-regular and $N$-regular. If $\varphi\colon M \to N$ is a nil-isomorphism then we get an induced nil-isomorphism $Gr^\lambda M \to Gr^\lambda N$ with respect to any of the Cartier structures $\kappa_?^e f^{a}$, where $? = M$ or $N$ and $a \geq \lceil \lambda(p^e -1)\rceil$.
\end{Prop}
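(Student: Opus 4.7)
The approach is a snake-lemma computation on the short exact sequences defining the graded pieces for $M$ and for $N$. I would first observe that $\varphi$, being $\kappa$-linear, automatically intertwines the compound operators $\kappa_M^e f^{a_e(\lambda)}$ and $\kappa_N^e f^{a_e(\lambda)}$, so it does restrict to a morphism of $\mathcal{C}$-modules for any such choice. Moreover, any Cartier submodule or quotient that is nilpotent with respect to the structural map $\kappa$ remains nilpotent with respect to $\kappa^e f^{a_e(\lambda)}$: iterating the latter $m$ times yields $\kappa^{em}$ followed by multiplication by a power of $f$, so nilpotence of $\kappa$ forces nilpotence of every $\kappa^e f^{a_e(\lambda)}$. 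Thus $\ker \varphi$ and $\coker \varphi$ are nilpotent with respect to every Cartier structure appearing in the statement.

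The heart of the argument is to show that the induced maps $\varphi_\mu\colon \tau(M, f^\mu) \to \tau(N, f^\mu)$ (which exist by the functoriality of the test module filtration, \cite[Proposition 1.15]{blicklestaeblerfunctorialtestmodules}) are themselves nil-isomorphisms for $\mu = \lambda$ and $\mu = \lambda - \eps$. To see this, I would factor $\varphi$ as $M \twoheadrightarrow \im \varphi \hookrightarrow N$ and handle each factor. The key input is that the test module of a nilpotent Cartier module vanishes: if $K$ is nilpotent then so is $H^0_\eta(K)_\eta$, hence the inclusion $0 \subseteq H^0_\eta(K)_\eta$ is already a nil-isomorphism and $\tau(K, f^\mu) = 0$. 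Combining this with the inclusion-functoriality of $\tau$ and a short diagram chase applied to $0 \to \ker\varphi \to M \to \im\varphi \to 0$ and $0 \to \im\varphi \to N \to \coker\varphi \to 0$ produces the desired nil-isomorphisms on test modules.

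Granting this, I would finish by applying the snake lemma to
\[\begin{xy}\xymatrix{
0 \ar[r] & \tau(M, f^\lambda) \ar[r]\ar[d]_{\varphi_\lambda} & \tau(M, f^{\lambda-\eps}) \ar[r]\ar[d]_{\varphi_{\lambda-\eps}} & Gr^\lambda M \ar[r]\ar[d]_{\bar\varphi} & 0 \\
0 \ar[r] & \tau(N, f^\lambda) \ar[r] & \tau(N, f^{\lambda-\eps}) \ar[r] & Gr^\lambda N \ar[r] & 0
}\end{xy}\]
to extract the exact sequence $\ker \varphi_{\lambda-\eps} \to \ker \bar\varphi \to \coker \varphi_\lambda$ together with a surjection $\coker \varphi_{\lambda-\eps} \twoheadrightarrow \coker \bar\varphi$. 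Since nilpotent Cartier modules form a Serre subcategory of $\mathcal{C}$-modules (for the Cartier algebra generated by $\kappa^e f^{a_e(\lambda)}$), both $\ker \bar\varphi$ and $\coker \bar\varphi$ are nilpotent, so $\bar\varphi$ is a nil-isomorphism. The main obstacle I anticipate is the second step: carefully verifying that nil-isomorphisms are preserved by $\tau(-, f^\mu)$, which goes beyond the inclusion functoriality of \cite[Proposition 1.15]{blicklestaeblerfunctorialtestmodules} and may need to be argued directly from the defining property of $\tau$ together with the vanishing of $\tau$ on nilpotent modules.
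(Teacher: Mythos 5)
Your reduction to the snake lemma is sound, and your two preliminary observations (that $\varphi$ intertwines the compound operators, and that nilpotence with respect to $\kappa$ propagates to nilpotence with respect to each $\kappa^e f^{a_e(\lambda)}$) are both correct. The snake lemma argument does indeed deliver the conclusion \emph{once you know} that the restrictions $\varphi_\mu\colon \tau(M,f^\mu)\to\tau(N,f^\mu)$ are nil-isomorphisms of $\mathcal{C}$-modules, using the Serre-subcategory property of nilpotent Cartier modules as you say. You also correctly observe that $\tau$ vanishes on nilpotent Cartier modules.

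The genuine gap is precisely where you flag it: the assertion that a ``short diagram chase'' applied to $0\to\ker\varphi\to M\to\im\varphi\to 0$ and $0\to\im\varphi\to N\to\coker\varphi\to 0$ produces the desired nil-isomorphisms on test modules. The functor $\tau(-,f^\mu)$ is not exact, and the only functoriality in hand from \cite[Proposition 1.15]{blicklestaeblerfunctorialtestmodules} is the inclusion direction: if $N'\subseteq N$ then $\tau(N',f^\mu)\subseteq\tau(N,f^\mu)$. For the inclusion $\iota\colon\im\varphi\hookrightarrow N$ with nilpotent cokernel this yields $\tau(\im\varphi,f^\mu)\subseteq\tau(N,f^\mu)$, but showing that the \emph{quotient} $\tau(N,f^\mu)/\tau(\im\varphi,f^\mu)$ is nilpotent is not a formal consequence of $\tau$ killing nilpotents; it requires using the minimality in the defining property of $\tau$ in a more delicate way (the minimality gives containments in the wrong direction for this purpose). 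For the surjection $\pi\colon M\twoheadrightarrow\im\varphi$ there isn't even an obvious induced map $\tau(M,f^\mu)\to\tau(\im\varphi,f^\mu)$ to analyze without further argument. So the crucial step is not established.

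The paper's proof sidesteps this entirely by citing two external results: it invokes \cite[Theorem 2.8]{staeblerunitftestmoduln} to conclude that $\tau(\varphi)\colon\tau(M,f^\lambda)\to\tau(N,f^\lambda)$ is \emph{surjective}, then uses \cite[Lemma 2.1 and the proof of Theorem 2.8]{staeblerunitftestmoduln} to reduce to the case that $M$ and $N$ are $F$-pure (which forces $\varphi$ itself to be surjective), and finally cites \cite[Theorem 5.8]{staeblertestmodulnvilftrierung}. So the paper's route leans heavily on previously established machinery, whereas your approach would be more self-contained --- but as written it leaves open the precise point that the cited \cite[Theorem 2.8]{staeblerunitftestmoduln} exists to settle. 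To repair your argument you would need either to cite a nil-invariance result for $\tau$ directly, or to reproduce the substance of that theorem's proof.
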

\begin{proof}
By \cite[Theorem 2.8]{staeblerunitftestmoduln} the restriction of $\varphi$ induces a surjective map $\tau(\varphi)\colon \tau(M, f^\lambda) \to \tau(N, f^\lambda)$. By \cite[Lemma 2.1, proof of Theorem 2.8]{staeblerunitftestmoduln} we may assume that both $M, N$ are $F$-pure. In particular, $\varphi$ is surjective. Now the claim follows from \cite[Theorem 5.8]{staeblertestmodulnvilftrierung}.
\end{proof}

In particular, given an $F$-regular Cartier module $M$ we obtain nil-isomorphisms $Gr^\lambda M \to Gr^\lambda \overline{M}$. Since for $\overline{M}$ working on the $e$th level or in the colimit induces natural isomorphisms of the generalized eigenspaces of the quotients by Theorem \ref{Herrgottssack} we see that we obtain the same nilpotence results if we construct the quotients by working with the colimit.

\section{Non-$F$-pure modules}
\label{Nonfpuremodules}
In this section we study a generalization of the \emph{non-$F$-pure ideal} or \emph{$\phi$-fixed ideal} to modules. These non-$F$-pure ideals were first introduced in \cite{fujinotakagischwedenonlc} and are further developed and studied in \cite{schwedecanlinearsystem} and \cite{hsiaoschwedezhangtoriccartier}. They are the characteristic $p$ analog of the so-called \emph{non-lc ideal} and have applications to birational geometry.

The importance of these for us is that we will prove a connection between zeros of Bernstein-Sato polynomials and certain non-$F$-pure-modules. As an other application we will partially answer a question of Bhatt, Schwede and Takagi \cite[Question 5.6]{bhattschwedetakagiweakordinarityfsing} in \cite{staeblerordinarynonlc}.

These non-$F$-pure modules will form a non-increasing and discrete filtration of the ambient Cartier module. However, there will be no continuity properties even if $M$ is $F$-regular.

The actual connection with Bernstein-Sato polynomials will be discussed in section \ref{BspSigmaSection}. Here we just develop the basic theory of non-$F$-pure modules.

\begin{Assu}
\label{assumption}
Throughout this section we assume that $R$ is an $F$-finite ring, $(M, \kappa)$ is a Cartier module and $f \in R$ is an $M$-regular element.
\end{Assu}

On several occasions we will need existence of test modules $\tau(M, f^\lambda)$. In these cases we will ensure this by making the additional assumption that $R$ is essentially of finite type over an $F$-finite field.

\begin{Def}
Given $(M, \kappa)$, an $M$-regular element $f$ and a non-negative rational number $\lambda$ we denote by $\mathcal{C}$ the Cartier algebra \[ \bigoplus_{e \geq 0} \kappa^e f^{\lceil \lambda (p^e -1)\rceil}R. \] We define the \emph{non-$F$-pure submodule} $\sigma(M, f^\lambda)$ as $\mathcal{C}_+^h M$ for all $h \gg 0$.
\end{Def}

Note that by \cite[Proposition 2.13]{blicklep-etestideale} the descending chain $\mathcal{C}_+^h M \supseteq \mathcal{C}_+^{h+1} M \supseteq \ldots$ stabilizes for \emph{arbitrary} Cartier algebras in any $F$-finite ring, i.e.\ $\sigma(M,f^\lambda)$ is well-defined.

We will also use the notation $\underline{M}_\mathcal{C}$ for $\mathcal{C}_+^h M$ ($h \gg 0$) or even $\underline{M}$ if $\mathcal{C}$ is clear from context.

\begin{Bem}
As is well-known to experts one can compute $\tau(M, f^\lambda)$ using either maps of the form $\kappa^e f^{\lceil \lambda p^e \rceil}$ or $\kappa^e f^{\lceil \lambda (p^e-1)\rceil}$.

This his however not the case for non-$F$-pure ideals or submodules (also well-known to experts). For completeness sake we illustrate this with a simple example: Take $R = M = \mathbb{F}_p[x]$ and $\lambda =1$ and for $\kappa$ the usual generator of $\Hom_R(F_\ast R, R)$. Then $\sigma(M, f^1) = R$, whereas if we consider the Cartier algebra $\mathcal{E}$ generated in degree $e$ by $\kappa^e f^{\lceil \lambda p^e \rceil}$, then we obtain $\mathcal{E}_+^h R = \tau(R, x^1) = (x)$.
\end{Bem}

\begin{Prop}
\label{SigmaCrystal}
We have $\sigma(M, f^\lambda) = \sigma(\sigma(M, f^\mu), f^\lambda)$ for all $0 \leq \mu \leq \lambda$. Moreover, if $\varphi\colon (M, \kappa) \to (N, \kappa')$ is a nil-isomorphism, then $\varphi$ induces a nil-isomorphism $\sigma(M, f^\lambda) \to \sigma(N,f^\lambda)$ and $\varphi(\sigma(M, f^\lambda)) = \sigma(N, f^\lambda)$.
\end{Prop}
\begin{proof}
The first assertion follows from \cite[Lemma 2.1]{staeblerunitftestmoduln}.
For the second assertion denote by $\mathcal{C}$ the graded subring of the non-commutative polynomial ring quotient \[R\{F\}/\langle rF^e - F^e r^{p^e} \vert r \in R, e \in \mathbb{N}\rangle\] given in degree $e$ by $F^e f^{\lceil \lambda (p^e -1) \rceil} R$. Then we let $\mathcal{C}$ act on $M$ by sending $F$ to $\kappa$ and similarly obtain an action on $N$ by sending $F$ to $\kappa'$. 

To say that $\varphi$ is a nil-isomorphism then means that $\varphi(F^e \cdot m) = F^e \cdot \varphi(m)$ and that both the kernel and the cokernel of $\varphi$ are annihilated by some power of $F$. Thus a fortiori $\ker \varphi$ and $\coker \varphi$ are annihilated by $\mathcal{C}_+^e \subseteq \mathcal{C}_{\geq e}$ for all $e \gg 0$. Thus $\varphi$ is also a $\mathcal{C}$-nil-isomorphism.

We compute \[ \varphi(\underline{M}_{\mathcal{C}}) = \varphi(\mathcal{C}_+^e M) = \mathcal{C}_+^e \varphi(M) \subseteq \mathcal{C}_+^e N = \underline{N}_{\mathcal{C}}\] for all $e \gg 0$. Since $\varphi$ and the natural inclusions $\underline{N}_\mathcal{C} \to N$, $\underline{M}_\mathcal{C} \to M$ are all $\mathcal{C}$-nil-isomorphisms we deduce that map induced by restriction \[\psi\colon \underline{M}_\mathcal{C} \longrightarrow \underline{N}_\mathcal{C}\] is also a $\mathcal{C}$-nil-isomorphism. By definition $\sigma(N, f^\lambda)$ does not admit any nilpotent quotients. Hence, we must have equality as claimed.
\end{proof}

\begin{Ko}
\label{SigmaWlogFPure}
Let $(M, \kappa)$ be a Cartier module. Then $\sigma(M, f^\lambda) = \sigma(\underline{M}_\kappa, f^\lambda)$, i.e.\ we may always assume that $(M, \kappa)$ is $F$-pure when computing $\sigma(M, f^\lambda)$.
\end{Ko}
\begin{proof}
The inclusion $\underline{M}_\kappa \to M$ is a nil-isomorphism. Now use Proposition \ref{SigmaCrystal}.
\end{proof}

\begin{Prop}
\label{DiscreteDecreasing}
The filtration $\sigma(M, f^\lambda)$ is non-increasing. If $R$ is of finite type over an $F$-finite field, then it is also discrete.
\end{Prop}
\begin{proof}
In order to show that the filtration is non-increasing simply observe that for any $e$ one has $\prod_{i=1}^e \kappa^{e_i} f^{\lceil (\lambda +\eps)(p^e -1) \rceil} M \subseteq \prod_{i=1}^e \kappa^{e_i} f^{\lceil \lambda(p^e-1)\rceil}M$. If $R$ is of finite type over an $F$-finite field, then discreteness follows by the same argument as in \cite[Theorem 4.18]{blicklep-etestideale}.
\end{proof}

As is well-known to experts one cannot expect any continuity properties for the filtration $\sigma(M, f^\lambda)$ (not even in the case $M = R$). For completeness we give an example (this also readily follows from Proposition \ref{SigmaEqualsTauFregularLocus} and Corollary \ref{Sigmataupdenominator} below):

\begin{Bsp}
Consider the $\mathbb{F}_p[x]$-submodule $M = \mathbb{F}_p[x] \cdot x^{-1} \subseteq \mathbb{F}_p[x, x^{-1}]$. Consider the usual Cartier action on $\mathbb{F}_p[x]$ i.e.\ $\kappa(x^i) = 1$ if $i = p-1$ and $= 0 $ for $0 \leq i \leq p-2$. This action then extends to $M$ via the formula \[\kappa(\frac{g}{x}) = \frac{1}{x} \kappa(gx^{p-1}).\]

Take $f= x^2, \lambda = \frac{1}{2}$ and assume that $p \neq 2$. Then we claim that $\sigma(M, f^\lambda) = k[x]$. We have \[\mathcal{C}_+ M = \sum_{e \geq 1} \kappa^e x^{2 (\frac{1}{2}(p^e -1)) } x^{-1} \mathbb{F}_p[x] = \sum_{e \geq 1} \kappa^e x^{p^e-1} x^{-1} \mathbb{F}_p[x] = \mathbb{F}_p[x].\]
One readily sees that $\mathcal{C}_+ \mathbb{F}_p[x] = \mathbb{F}_p[x]$. Hence, $\sigma(M, f^\lambda) = \mathcal{C}_+^e M = \mathcal{C}_+^{e-1} \mathbb{F}_p[x] = \mathbb{F}_p[x]$ for all $e \gg 0$.

Next, we claim that $\sigma(M, f^{\lambda + \eps}) \subseteq (x)$ for any $0 < \eps \ll 1$. Since $\mathcal{C}_+^h \subseteq \mathcal{C}_{\geq h}$ it suffices to show that $\mathcal{C}_{\geq h} M \subseteq (x)$ for all $h \gg 0$. Thus we have to check that for all $h \gg 0$ \[\kappa^h x^{2 \lceil (\frac{1}{2} + \eps)(p^h -1)\rceil} x^{-1} \mathbb{F}_p[x] \subseteq (x)\] 

For this it suffices to show that the exponent of $x$ in the expression above is $\geq p^h$. Write $\lceil (\frac{1}{2} + \eps)(p^h -1) \rceil = (\frac{1}{2} + \eps)(p^h -1) + \delta$ with $\delta \in [0,1)$. Then the exponent takes the following form
\[(1 + 2\eps)(p^h -1) + 2 \delta -1 = p^h + 2 \eps p^h -1 - 2  \eps + 2\delta -1.\] Clearly for $h \gg 0$ (depending on $\eps$) this is larger than $p^h$.
Thus we see that the filtration is, for $p \neq 2$, right-continuous at $\frac{1}{2}$.

Let us now argue that it is left-continuous at $\frac{1}{2}$ for $p  = 2$. More precisely, for $p =2$ we assert that $\sigma(M, f^{\frac{1}{2}}) \subseteq (x)$ and $\sigma(M, f^{\frac{1}{2} - \eps}) = \mathbb{F}_2[x]$ for all $0 < \eps \ll 1$. To see the first inclusion recall that  $\sigma(M, f^\lambda) \subseteq \mathcal{C}_+^2 M$ and note that elements of $\mathcal{C}_+^2 M$ are sums of elements of the form
\[\kappa^a x^{2\lceil 2^{a-1} - \frac{1}{2}\rceil} \kappa^b x^{2\lceil 2^{b-1} - \frac{1}{2}\rceil} m = \kappa^{a+b} x^{2^{a+b} + 2^b} m = x \kappa^{a+b} x^{2^b}m\] with $a, b \geq 1$. Thus $\mathcal{C}_+^2 M \subseteq (x)$.

Let us now show that $\sigma(M, f^{\frac{1}{2} - \eps}) = \mathbb{F}_2[x]$. We have \[  \mathcal{C}_+^e \mathbb{F}_2[x] \subseteq \mathcal{C}_+^e M = \mathcal{C}_+^{e-1} \mathcal{C}_+ M \subseteq \mathcal{C}_+^{e-1} \mathbb{F}_2[x]\] and thus $\sigma(M, f^{\frac{1}{2} - \eps}) =\sigma(\mathbb{F}_2[x], f^{\frac{1}{2} - \eps})$. We claim that $\mathcal{C}_+ \mathbb{F}_2[x] = \mathbb{F}_2[x]$. For this we need to consider the exponent of $x$ in \[\kappa^e x^{2 \lceil (\frac{1}{2} - \eps)(2^e -1) \rceil}.\] This exponent is given by $\lceil 2^{e -1} - \frac{1}{2} - \eps 2^e + \eps\rceil 2 \leq 2^e -1$ for $e \gg 0$. Thus $\mathcal{C}_+ \mathbb{F}_2[x] = \mathbb{F}_2[x]$.
\end{Bsp}

In \cite[Proposition 4.10 (4)]{fujinotakagischwedenonlc} the authors prove that $\sigma(R, f^\lambda) \subseteq \tau(R, f^{\lambda - \eps})$ if the non-$F$-regular locus of $R$ is contained in $V(f)$ (i.e.\ $R_f$ is $F$-regular). It is also observed in \cite[Remark 14.11]{fujinotakagischwedenonlc} that, contrary to the situation in characteristic zero, one does not have $\sigma(R, f^\lambda) = \tau(R, f^{\lambda - \eps})$ even if $R$ is ($F$-)regular.

We will in fact see that these properties extend to the module situation and also show that this "defect" of the non-$F$-pure filtration is constrained to those $\lambda$ for which the denominator is divisible by $p$.

Recall that, according to our running Assumption \ref{assumption}, $f$ is an $M$-regular element and thus not contained in any associated prime of $M$. Thus, $M_f$ is $F$-regular if and only if the constant sequence $(f, \ldots, f)$ is a sequence of test elements for $M$. Equivalently this means that $V(f)$ contains the non-$F$-regular locus of $M$.

\begin{Le}
\label{TauTestelementManipulation}
Let $(M, \kappa)$ be a Cartier module and $f$ an $M$-regular element. Assume that $\tau(M, f^\lambda)$ exists. If $M_f$ is $F$-regular, then \[\tau(M, f^\lambda) = \sum_{e \geq e_0} \mathcal{C}_e f^s M\] for all $s \gg 0$ and any $e_0 \geq 0$ where $\mathcal{C}$ is either of the two Cartier algebras
\[\bigoplus_{e \geq 0}\kappa^e f^{\lceil \lambda (p^e -1)\rceil} R \text{ or } \bigoplus_{e \geq 1} \kappa^e f^{\lceil \lambda (p^e)\rceil} R \oplus R.\]
\end{Le}
\begin{proof}
By $F$-regularity of $M_f$ the inclusion $\tau(M, f^{\lambda})_f \subseteq M_f$ is an equality. Thus we find $s \gg 0$ such that $f^s M \subseteq \tau(M, f^{\lambda})$. Set $N \coloneqq \sum_{e \geq e_0} \mathcal{C}_e f^s M$ and note that it is a $\mathcal{C}$-module. Thus we get an inclusion $N \subseteq \tau(M, f^\lambda)$ and, after inverting $f$, we obtain $N_f = \sum_{e \geq e_0} \kappa^e M_f = M_f$ since $M_f$ is $F$-pure with respect to $\kappa$. In particular, $H^0_\eta(N)_\eta = H^0_\eta(M)_\eta$ for all associated primes $\eta$ of $M$ since $f \notin \eta$. Since $\tau(M, f^\lambda)$ is minimal with this property the assertion follows.
\end{proof}

\begin{Prop}
\label{SigmaEqualsTauFregularLocus}
Assume that $R$ is essentially of finite type over an $F$-finite field. Let $(M, \kappa)$ be a Cartier module and $\lambda$ a positive rational number.
\begin{enumerate}[(a)]
\item{If $p$ does not divide the denominator of $\lambda$, then $\sigma(M, f^\lambda) \supseteq \tau(M, f^{\lambda - \eps})$ for any $0 < \eps \ll 1$.}
\item{If $M_f$ is $F$-regular, then $\sigma(M, f^\lambda) \subseteq \tau(M, f^{\lambda - \eps})$ for any $\lambda > 0$ and any $0 < \eps \ll 1$.}
\end{enumerate}
\end{Prop}
\begin{proof} We start by proving (a). By definition $\sigma(M, f^\lambda) = \mathcal{C}_+^a M$ for all $a \gg 0$. Fix $e$ such that $\lambda(p^e -1)$ is an integer. Then
\[\mathcal{C}_+^a M \supseteq \mathcal{C}_+^a \tau(M, f^0) \supseteq (\mathcal{C}_e)^a \tau(M, f^0) = \kappa^{ae} f^{\lambda(p^{ae} -1)} \tau(M, f^0) = \tau(M, f^{\lambda - \frac{1}{p^{ea}}}),\]  where we use Skoda for test modules and Lemma \ref{CartierTauDivisionByP}. 

Part (b) is an adaptation of \cite[Proposition 4.10 (4)]{fujinotakagischwedenonlc} to modules. Fix $0 < \eps \ll 1$.
Using Lemma \ref{TauTestelementManipulation} we find $s \gg 0$ such that \begin{equation}\label{Eq.TauComputation} \tau(M, f^{\lambda - \eps}) = \sum_{e \geq e_0} \kappa^e f^{\lceil(\lambda - \eps)p^e\rceil} f^s M\end{equation} for any $e_0 \geq 0$.

Choose $h$ such that $\sigma(M, f^\lambda) = \mathcal{C}_+^h M$ and such that $\lceil \lambda (p^e -1)\rceil \geq \lceil (\lambda - \eps) p^e \rceil + s$ for any $e \geq h$. Then

\begin{align*}\sigma(M, f^\lambda) = \mathcal{C}_+^h M\subseteq \mathcal{C}_{\geq h} M = \sum_{e \geq h} \kappa^e f^{\lceil \lambda (p^e -1) \rceil} M \subseteq \sum_{e \geq h} \kappa^e f^{\lceil (\lambda - \eps) p^e \rceil} f^s M  = \tau(M, f^{\lambda -\eps})\end{align*} where we use (\ref{Eq.TauComputation}) for the last equality.
\end{proof}

For the next result we need both existence of $\tau$ as well as discreteness.

\begin{Ko}
Let $R$ be of finite type over an $F$-finite field. Let $(M, \kappa)$ be a Cartier module and $\lambda$ a positive integer. Assume that $M_f$ is $F$-regular. If the denominator of $\lambda$ is not divisible by $p$, then $\sigma(M,f^\lambda) = \sigma(M, f^{\lambda - \eps})$ for all $0 < \eps \ll 1$.
\end{Ko}
\begin{proof}
By Lemma \ref{SigmaEqualsTauFregularLocus} there is $0 < \eps \ll 1$ such that $\sigma(M, f^\lambda) = \tau(M, f^{\lambda - \eps})$. Choosing $\eps$ suitably we may assume that the denominator of $\lambda - \eps$ is also not divisible by $p$. Thus we find $0 < \eps' \ll 1$ such that $\sigma(M, f^{\lambda - \eps}) = \tau(M, f^{\lambda - \eps -\eps'})$. By discreteness of $\tau$ we conclude that $\sigma(M, f^{\lambda - \eps}) = \sigma(M, f^\lambda)$.
\end{proof}

\begin{Bsp}
We do not get any a similar connection between $\tau(M,f^\lambda)$ and $\sigma(M, f^\lambda)$ if $\lambda = 0$. Consider for example $M = \mathbb{F}_p[x] \cdot x^{-1}$ with the usual Cartier structure $\kappa$. Then $(M, \kappa)$ is $F$-pure while $\tau(M) = \mathbb{F}_p[x]$.

The connection between the filtrations $\sigma$ and $\tau$ does no longer hold without the assumption that $M_f$ is $F$-regular. For instance, consider $\mathbb{F}_p[x,y] \cdot y^{-1}$  with the Cartier structure induced from $\mathbb{F}_p[x,y]$ and let $f = x$. Then \[\tau(M, x^{\lambda}) = (x^{\lfloor \lambda \rfloor}) \text{ and } \sigma(M, x^{\lambda}) = \mathbb{F}_p[x,y] \cdot y^{-1}\] for any $0 < \lambda \leq 1$. Moreover, $\sigma(M, x^{1 +\eps}) = \mathbb{F}_p[x,y] \cdot xy^{-1}$ for any $0 < \eps \ll 1$.

Indeed, \[\tau(M, x^\lambda) = \tau(\tau(M, x^0), x^{\lambda}) = \tau(\mathbb{F}_p[x,y], x^{\lambda}) = x^{\lfloor \lambda \rfloor},\] since $\mathbb{F}_p[x,y] \subseteq \mathbb{F}_p[x,y] \cdot y^{-1}$ agree at all associated($=$minimal) primes and $\mathbb{F}_p[x,y]$ is $F$-regular.

For the other equality, note that \begin{align*} \mathcal{C}_+ \mathbb{F}_p[x,y] \cdot y^{-1} &= \sum_{e \geq 1} \kappa^e x^{\lceil \lambda (p^e -1) \rceil } \frac{y^{p^{e-1}}}{y^{p^e}} \mathbb{F}_p[x,y] = y^{-1} \sum_{e \geq 1} \kappa^e y^{p^e -1} x^{\lceil \lambda(p^e -1) \rceil} \mathbb{F}_p[x,y]\end{align*}
From this computation we see that, if $\lambda \leq 1$, then $\mathcal{C}_+ \mathbb{F}_p[x,y] \cdot y^{-1} = \mathbb{F}_p[x,y] \cdot y^{-1}$. Thus applying $\mathcal{C}_+^e$ for $e \gg 0$ shows that $\sigma(M, f^\lambda) = \mathbb{F}_p[x,y] \cdot y^{-1}$ for $0 \leq \lambda \leq 1$.

If $\lambda = 1 +\eps$, then the inclusion $\mathcal{C}_+^h \subseteq \mathcal{C}_{\geq h}$ shows via the above computation that $\sigma(M, f^{\lambda}) \subseteq \mathbb{F}_p[x,y] \cdot xy^{-1}$. Since the filtration $\sigma$ is non-increasing we only have to show the converse inclusion for $\eps$ with denominator not divisible by $p$. In this case, fix $a$ such that $(1+\eps)(p^a -1)$ is an integer. Then
\[(\kappa^a x^{(1+\eps)(p^a -1)})^h x^{p^{ah} -\eps (p^{ah} -1)} y^{-1}  = \kappa^{ah} x^{2p^{ah} -1} \frac{y^{p^{ah} -1}}{y^{p^{ah}}}= y^{-1} x \kappa^{ah} y^{p^{ah} -1} x^{p^{ah} -1} = xy^{-1} \in \mathcal{C}_+^h M.\]
\end{Bsp}

In \cite{fujinotakagischwedenonlc} the authors introduce some other candidates for the non-$F$-pure ideal and ask whether they all coincide. In what follows we write $\mathcal{C}$ for the Cartier algebra generated in degree $e$ by $\kappa^e f^{\lceil \lambda (p^e -1)\rceil}$. The following variants are considered:
\begin{enumerate}[(a)]
\item{For a fixed $n \gg 0$ define $\sigma_n(M, f^\lambda) \coloneqq (\mathcal{C}_{\geq n})^h M$ for $h \gg 0$.}
\item{Suppose that $\lambda(p^e -1) \in \mathbb{Z}$ for some sufficiently large and sufficiently divisible $e$. Write $\mathcal{F}$ for the algebra that is given by $\bigoplus_{a \geq 0} \kappa^{ea} f^{\lambda (p^{ea} -1)} R$. Then set $\sigma'(M, f^\lambda) \coloneqq \mathcal{F}_+^h M$ for $h \gg 0$.}
\end{enumerate}

Once again note that by \cite[Proposition 2.13]{blicklep-etestideale} all these notions are well-defined for any $F$-finite ring $R$.

We point out that in \cite{fujinotakagischwedenonlc} non-$F$-pure ideals are also studied in the context of non-principal ideals (i.e. if $f$ is replaced by an ideal $\mathfrak{a} \subseteq R$). The authors then work with the integral closures of the $\mathfrak{a}^{\lceil \lambda(p^e -1) \rceil}$ making things more subtle (if $R$ is normal then any principal ideal generated by a non-zero-divisor is integrally closed -- see e.g.\ \cite[Proposition 10.2.3]{brunsherzog}).

\begin{Prop}
\label{Replacinging4.9}
Let $R$ be $F$-finite and $(M, \kappa)$ a Cartier module. Then $\sigma_n(M, f^\lambda) = \sigma(M, f^\lambda)$ for all $n \gg 0$. Assume now that $R$ is essentially of finite type over an $F$-finite field. If $M_f$ is $F$-regular and the denominator of $\lambda$ is not divisible by $p$, then $\sigma(M,f^\lambda) = \sigma_n(M,f^\lambda) = \sigma'(M, f^\lambda)$ for all $n \gg 0$.
\end{Prop}
\begin{proof}
Note that one clearly has containments $\sigma'(M, f^\lambda) \subseteq \sigma_n(M, f^\lambda) \subseteq \sigma(M, f^\lambda)$ whenever these objects are defined.

We first show that $\sigma(M, f^\lambda) \subseteq \sigma_n(M, f^\lambda)$. Let $h \gg 0$ be such that $\sigma(M,f^\lambda) = \mathcal{C}_+^h M$ and $\sigma_n(M, f^\lambda) = (\mathcal{C}_{\geq n})^h M$. Then $\sigma(M, f^\lambda) = \mathcal{C}_+^{nh} M \subseteq (\mathcal{C}_{\geq n})^h M = \sigma_n(M, f^\lambda)$.

Assume now that the denominator of $\lambda$ is not divisible by $p$. Since $f$ is a test element, we know, by Lemma \ref{SigmaEqualsTauFregularLocus}, that $\sigma(M,f^\lambda) = \tau(M, f^{\lambda - \eps})$ for $0 < \eps \ll 1$. Therefore, it suffices to prove that \[\sigma'(M, f^\lambda) \supseteq \tau(M, f^{\lambda -\eps}).\] To see this let us denote by $\mathcal{F}$ the Cartier algebra $\bigoplus_{e \geq 0} \kappa^{ea} f^{\lambda(p^{ea} -1)} R$. The argument now proceeds as in Lemma \ref{SigmaEqualsTauFregularLocus}.

To wit \[\mathcal{F}_+^h M \supseteq \mathcal{F}_+^h \tau(M, f^0) \supseteq (\mathcal{F}_{1})^h \tau(M, f^0) = \kappa^{ah} f^{\lambda(p^{ah} -1)} \tau(M, f^0) = \tau(M, f^{\lambda - \frac{1}{p^{ah}}}).\]
\end{proof}

\begin{Ko}
\label{SimpleSigmaDescription}
Let $R$ be essentially of finite type over an $F$-finite field and $(M, \kappa)$ a Cartier module. Assume that $M_f$ is $F$-regular. If the denominator of $\lambda$ is not divisible by $p$, then $\sigma(M, f^\lambda) = \kappa^e f^{\lambda(p^e-1)} M$ for all $e \gg0$ such that $\lambda(p^e -1)$ is an integer.
\end{Ko}
\begin{proof}
By Proposition \ref{Replacinging4.9} we have $\sigma(M, f^\lambda) = \sigma'(M, f^\lambda)$. Let $\mathcal{E}$ be the Cartier algebra $\bigoplus_{e \geq 0} \kappa^{ea} f^{\lambda(p^{ea} -1)} R$, where $\lambda(p^a -1)$ is an integer.

Note that \[(\mathcal{E}_+)^2 = \bigoplus_{e + f \geq 2} \kappa^{(e+f)a} f^{\lambda(p^{(e+f)a -1)}} = \mathcal{E}_{e \geq 2}\] and thus inductively $\mathcal{E}_+^e = \mathcal{E}_{\geq e}$. Since $\kappa^{ea}(f^{\lambda(p^{ea} -1)}M) \subseteq M$ we have \[\kappa^{2ea} f^{\lambda (p^{2ea} -1)} M  = \kappa^{ea} f^{\lambda (p^{ea} -1)} \kappa^{ea} f^{\lambda(p^{ea} -1)} M \subseteq \kappa^{ea} f^{\lambda(p^{ea} -1)} M.\] Thus $\sigma'(M,f^\lambda) = \mathcal{E}_{\geq e} M = \mathcal{E}_e M = \kappa^{ea} f^{\lambda(p^{ea} -1)} M$ for all $e \gg 0$.
\end{proof}

\begin{Prop}
\label{PS}
Assume that $R$ is essentially of finite type over an $F$-finite field and $(M, \kappa)$ is a Cartier module. Furthermore, assume that $M_f$ is $F$-regular and that the denominator of $\lambda$ is divisible by $p$. Then $\sigma(M, f^\lambda) = \sigma(M, f^{\lambda + \eps})$ for all $0 < \eps \ll 1$. 
\end{Prop}
\begin{proof}
As the filtration is non-increasing we just need to deal with the inclusion from left to right.
The $R$-module $\mathcal{C}_+ M \subseteq M$ is finitely generated. Hence, we have $\mathcal{C}_+ M = \sum_{e=1}^{E_1} \mathcal{C}_e M$. Note that it is harmless to pass to a larger $E_1$ in doing this. Repeating the argument with $\mathcal{C}_+^2 M\subseteq \mathcal{C}_+ M$ we find that $\mathcal{C}_+^2 M = \sum_{e=1}^{E_2} \mathcal{C}_e \mathcal{C}_+ M$. Repeating this argument we conclude that $\mathcal{C}_+^{h+1} M = (\sum_{e=1}^{E} \mathcal{C}_e)^{h+1} M$, where $E$ is the maximum over the $E_1, \ldots, E_{h+1}$.

In particular, if we choose $h \gg 0$ such that $\sigma(M, f^\lambda) = \mathcal{C}_+^h M$, then
\[ ( \sum_{e=1}^E \mathcal{C}_e)^{h+1} M = \mathcal{C}_+ \mathcal{C}_+^h M = \mathcal{C}_+^h M = ( \sum_{e=1}^E \mathcal{C}_e)^h M. \] Iterating this we deduce that $\sigma(M, f^\lambda) = (\sum_{e=1}^E \mathcal{C}_e)^h M$ for some fixed large $E$ and all $h \gg 0$. By our assumption on $\lambda$ we have $\lceil \lambda (p^e -1) \rceil = \lambda(p^e - 1) + \delta_e$ with $\delta_e > 0$ for all $e =1, \ldots, E$.

We can write a homogeneous element of $(\sum_{e=1}^E \mathcal{C}_e)^h$ as \begin{equation}\label{Eq.TypicalElement}\kappa^{e_1} f^{\lceil \lambda (p^{e_1} -1) \rceil} \cdots \kappa^{e_h} f^{\lceil \lambda (p^{e_h} -1) \rceil} = \kappa^a f^{\lambda (p^a -1) + \sum_{i=1}^h \delta_{e_i} p^{\sum_{j=i+1}^h e_j}}, \end{equation} where $a = e_1 + \ldots + e_h$.

Let us now analyze the second term of the exponent of $f$ in (\ref{Eq.TypicalElement}) after dividing by $p^a -1$.
\begin{claim}
\label{claim.exponent}
The expression \[A \coloneqq \frac{\sum_{i=1}^h \delta_{e_i} p^{\sum_{j={i+1}}^h e_j}}{p^{e_1 + \ldots + e_h} -1}\] is bounded away from zero by a constant independent of $h$.
\end{claim}
\begin{proof}[proof of \ref{claim.exponent}]
Note that, as noted above, the $\delta_i$ are all contained in a finite set of positive numbers. Let $\delta$ be the minimum over this set, then
\[A \geq \delta \frac{\sum_{i=1}^h p^{\sum_{j=i+1}^h e_j}}{p^{e_1 + \ldots + e_h} -1} \geq \delta \frac{p^{e_2 + \ldots +e_h}}{p^{e_1 + \ldots + e_h} -1} \geq \delta \frac{p^{e_2 + \ldots +e_h}}{p^{e_1 + \ldots + e_h}}  = \delta p^{-e_1} \geq \delta p^{-E}. \]
\end{proof}

Taking for instance $\eps = \delta p^{-E}/2$ we then get $\mathcal{C}_+^h M = \sum_{a=h}^{Eh} \kappa^a f^{(\lambda + \eps)(p^a -1)} f^{r(a)} M$, where $r(a)$ grows with $a$. In particular, taking a larger $h$ we can arrange for all $r(a)$ to be larger than a given integer $s$. This allows us to apply Lemma  \ref{TauTestelementManipulation}, i.e.\ we get that \[\mathcal{C}_+^h M = \sum_{a=h}^{Eh} \kappa^a f^{(\lambda + \eps)(p^a -1) +r(a)} M \subseteq \sum_{a\geq h} \kappa^a f^{\lceil(\lambda + \eps)(p^a -1)\rceil} f^{s} M = \tau(M, f^{\lambda + \eps}).\]
Now we can use Lemma \ref{SigmaEqualsTauFregularLocus} to conclude that $\tau(M, f^{\lambda + \eps}) = \sigma(M, f^{\lambda + \eps + \eps'})$, where we choose $0 < \eps' \ll 1$ in such a way that the denominator of $\lambda + \eps + \eps'$ is not divisible by $p$.
\end{proof}

We have just seen in the proof above that the following holds:

\begin{Ko}
\label{Sigmataupdenominator}
Assume that $R$ is essentially of finite type over an $F$-finite field, that $(M, \kappa)$ is a Cartier module and that $\lambda$ is a positive rational number. Assume that $f$ is a test element for $M$. If the denominator of $\lambda$ is divisible by $p$, then $\sigma(M, f^\lambda) = \tau(M, f^{\lambda + \eps})$.
\end{Ko}

\begin{Bem}
\label{anderealgebra}
At this point one might wonder whether one obtains unconditional left/right-continuity if one instead considers $\mathcal{E}_+^h M$ where \[\mathcal{E} = \bigoplus_{e \geq 1} \kappa^e f^{\lceil \lambda p^e \rceil} R \oplus R\] (the last summand being $\mathcal{E}_0$). This is however not the case.

Indeed, if $(M,\kappa)$ is $F$-regular, then $\mathcal{E}_+^h M \subseteq \mathcal{E}_{\geq h} M = \tau(M, f^{\lambda})$, where the equality is due to Theorem \ref{TauFRegularDescription} and \cite[Lemma 4.1]{blicklestaeblerbernsteinsatocartier}. By minimality of $\tau$ we then obtain equalities everywhere. Hence, in this case the filtration is right-continuous but not left-continuous.

If $(M, \kappa)$ is not $F$-regular but $F$-pure and $M_f$ is $F$-regular, then for $\lambda = 0$ one has $\mathcal{E}_+^h M = M$ while  we claim that for any $0< \lambda \ll 1$ one has $\mathcal{E}_+^h M = \tau(M, f^0)$. Namely, \[\mathcal{E}_+^h M \subseteq \mathcal{E}_{\geq h} M = \sum_{e \geq h} \kappa^e f^{\lceil \lambda p^e \rceil} M \subseteq \sum_{e \geq h} \kappa^e f^{\lceil (\lambda - \eps) p^e \rceil} f^s M = \tau(M, f^{\lambda - \eps})\] where the last equality is due to Lemma \ref{TauTestelementManipulation}. By right-continuity of $\tau$ we have $\tau(M, f^0) = \tau(M, f^{\lambda - \eps}) =\tau(M, f^\lambda)$. By minimality of $\tau$ we conclude that $\mathcal{E}_+^h M = \tau(M, f^\lambda)$.
\end{Bem}

\begin{Que}
Is the filtration defined in Remark \ref{anderealgebra} right-continuous for $\lambda > 0$ for general $F$-pure $M$?
\end{Que}

If we filter along a non-principal ideal, then there are two natural generalizations that one can consider:

\begin{Def}
Fix an $F$-finite ring $R$, an ideal $\mathfrak{a} \subseteq R$ and a non-negative rational number $\lambda$. If $\mathcal{C}$ is a Cartier algebra and $M$ a $\mathcal{C}$-module, then we define the \emph{non-$F$-pure module (with respect to $\mathcal{C}, \mathfrak{a}$ and $\lambda$}) as \[ \sigma(M, \mathfrak{a}^\lambda) = \mathcal{E}_+^h M \quad(h \gg 0),\] where $\mathcal{E}$ is the Cartier algebra generated in degree $e$ by $\mathcal{C}_e \mathfrak{a}^{\lceil \lambda (p^e -1)\rceil}$.

We also consider a variant where we work with integral closures: \[\bar{\sigma}(M, \mathfrak{a}^\lambda) = \mathcal{F}_+^h M \quad (h \gg 0),\] where $\mathcal{F}$ is the Cartier algebra generated in degree $e$ by \[\mathcal{C}_e \overline{\mathfrak{a}^{\lceil\lambda(p^e -1)\rceil}.}\]
\end{Def}

As mentioned earlier the notion $\bar{\sigma}$ is the one considered in \cite{fujinotakagischwedenonlc}.

We do not know how to prove an analog of Skoda's theorem for non-$F$-pure ideals. We can only prove one direction:

\begin{Le}
Let $(M, \kappa)$ be a Cartier module, $\mathfrak{a}$ an ideal and $\lambda \geq 1$ a rational number. Then $\mathfrak{a} \, \, \sigma(M, \mathfrak{a}^{\lambda -1}) \subseteq \sigma(M, \mathfrak{a}^{\lambda})$. Likewise, $\mathfrak{a} \, \, \bar{\sigma}(M, \mathfrak{a}^{\lambda -1}) \subseteq \bar{\sigma}(M, \mathfrak{a}^{\lambda})$.
\end{Le}
\begin{proof}
By definition $\sigma(M, \mathfrak{a}^{\lambda -1}) = \mathcal{C}_+^h M$ for all $h \gg 0$, where $\mathcal{C}$ is the Cartier algebra that is generated in degree $e$ by $\kappa^e \mathfrak{a}^{\lceil (\lambda -1)(p^e -1)\rceil}$. Hence, we can write any element of $\mathcal{C}_+^h$ as a sum of elements $C \coloneqq \prod_{i=1}^h \kappa^{e_i} \mathfrak{a}^{\lceil (\lambda -1)(p^{e_i} -1) \rceil}$.

Note that one has an inclusion $\mathfrak{a}^{[p^e]} \cdot \mathfrak{a}^{\lceil (\lambda-1)(p^e -1)\rceil} \subseteq \mathfrak{a}^{\lceil \lambda  (p^e -1) +1\rceil}$ for all $e$. Hence, \begin{align*}\mathfrak{a} \, \, C &= \kappa^{e_1} \mathfrak{a}^{[p^{e_1}]} \mathfrak{a}^{\lceil (\lambda -1)(p^{e_1} -1)\rceil}\prod_{i=2}^h \kappa^{e_i} \mathfrak{a}^{\lceil (\lambda -1) (p^{e_i}- 1 )\rceil}\\ &\subseteq \kappa^{e_1} \mathfrak{a}^{\lceil \lambda(p^{e_1} - 1) \rceil} \mathfrak{a} \prod_{i =2}^h \kappa^{e_i} \mathfrak{a}^{\lceil (\lambda -1)(p^{e_i} -1)} \subseteq\cdots \subseteq \prod_{i=1}^h \kappa^{e_i} \mathfrak{a}^{\lceil \lambda (p^{e_i} -1) \rceil} \mathfrak{a}.\end{align*}
Since $\mathfrak{a}M \subseteq M$ the claim follows.

The inclusion for $\bar{\sigma}$ follows similarly using the fact that $\overline{I} \cdot \overline{J} \subseteq \overline{IJ}$ for ideals $I, J$.
\end{proof}

Note that if $\mathfrak{a} = (f)$ and $M_f$ is $F$-regular, then we do get an analog of Skoda provided that $\lambda > 1$. We are therefore led to the following

\begin{Que}
\begin{enumerate}[(a)]
\item{Does a full Skoda theorem hold for $\sigma(M, \mathfrak{a}^\lambda)$ if $M$ is $F$-regular or in general? That is, if $\mathfrak{a}$ is generated by $m$ elements is it true that $\mathfrak{a} \,\,\sigma(M, \mathfrak{a}^{\lambda -1}) = \sigma(M, \mathfrak{a}^{\lambda})$ for $\lambda > m$?}
\item{Does a full Skodda theorem hold if $\mathfrak{a} = (f)$ in general?}
\end{enumerate}
\end{Que}

We may also ask

\begin{Que}
Ist the filtration $\sigma(M, f^\lambda)$ left-continuous if the $F$-jumping nubmers of $\tau(M, f^{\lambda})$ all have a denominator not divisible by $p$?
\end{Que}

\begin{Que}
If $M$ is $F$-regular and the denominator of $\lambda$ is not divisible by $p$, then is it true that $\sigma(M, \mathfrak{a}^\lambda) = \tau(M, \mathfrak{a}^{\lambda - \eps})$ for all $0 < \eps \ll 1$?
\end{Que}

\section{Bernstein-Sato polynomials attached to Cartier modules without $F$-regularity}
\label{BspSigmaSection}
The goal of this section is to investigate whether the correspondence between zeros of the Bernstein-Sato polynomial (Definition \ref{def.bernsteinsato}) and $F$-jumping numbers can be extend beyond the case where $(M, \kappa)$ is $F$-regular.

We can prove a partial generalization under the assumption that $M_f$ is $F$-regular. One could phrase this result still entirely in the language of test modules but we will use the language of non-$F$-pure modules (the translation being given by  Proposition \ref{SigmaEqualsTauFregularLocus}). The reason for this is that the author believes that, using non-$F$-pure modules, this connection might hold in greater generality. Essentially, the question is what are the minimal assumptions for Corollary \ref{SimpleSigmaDescription} to hold.

\begin{Def}
Let $R$ be $F$-finite, $(M, \kappa)$ a Cartier module and $f$ and $M$-regular element. Then we define
\[Gr^\lambda_\sigma M \coloneqq \sigma(M, f^{\lambda})/\sigma(M, f^{\lambda + \eps}).\]
\end{Def}

\begin{Le}
Let $R$ be essentially of finite type over an $F$-finite field and let $(M, \kappa)$ be a Cartier module. Assume that $M_f$ is $F$-regular. Then $\kappa^a f^{\lceil \lambda (p^a -1) \rceil}$ defines a Cartier structure on $Gr^\lambda_\sigma M$. If $Gr^\lambda_\sigma M \neq 0$, then this Cartier structure is non-nilpotent if and only if $\lambda(p^a -1) \in \mathbb{Z}$.
\end{Le}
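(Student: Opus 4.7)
The plan is to handle the two assertions separately. The operator $\phi:=\kappa^a f^{\lceil \lambda(p^a-1)\rceil}$ is a degree-$a$ element of the Cartier algebra $\mathcal{C}_+$ attached to $(M,\kappa,f,\lambda)$, so the stability $\mathcal{C}_+\sigma(M,f^\lambda)=\sigma(M,f^\lambda)$ (Proposition \ref{DiscreteDecreasing}) immediately gives $\phi\,\sigma(M,f^\lambda)\subseteq\sigma(M,f^\lambda)$. To see that $\phi$ also preserves $N:=\sigma(M,f^{\lambda+\eps})$, I would use discreteness of $\sigma$ to shrink $\eps$ so that $\lambda+\eps$ lies in the plateau above $\lambda$ and has denominator coprime to $p$. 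Lemma \ref{NonFpureIdealSimpleDescr} then describes $N=\kappa^c f^{(\lambda+\eps)(p^c-1)}M$ for suitably chosen $c$, and a direct composition computation yields $\phi N=\kappa^{a+c}f^B M$ with $B=\lceil\lambda(p^a-1)\rceil p^c+(\lambda+\eps)(p^c-1)$. Setting $\lambda'':=B/(p^{a+c}-1)$ one verifies $\lambda''>\lambda$ and $\lambda''$ still belongs to the plateau, so a second application of Lemma \ref{NonFpureIdealSimpleDescr} identifies $\phi N$ with $\sigma(M,f^{\lambda''})=N$. Descent to the quotient then produces the desired Cartier structure.

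For the non-nilpotence criterion the key algebraic identity is
\[\phi^k=\kappa^{ka}\,f^{\lambda'(p^{ka}-1)},\qquad \lambda':=\lambda+\tfrac{\delta}{p^a-1},\]
where $\delta:=\lceil\lambda(p^a-1)\rceil-\lambda(p^a-1)\in[0,1)$. The crucial numerical observation is that $\lambda'(p^a-1)=\lceil\lambda(p^a-1)\rceil\in\mathbb{Z}$, so the denominator of $\lambda'$ divides $p^a-1$ and is in particular coprime to $p$. Consequently Lemma \ref{NonFpureIdealSimpleDescr} always applies to $\lambda'$ even when it does not apply to $\lambda$ itself, and $\phi^k M=\kappa^{ka}f^{\lambda'(p^{ka}-1)}M=\sigma(M,f^{\lambda'})$ for all $k\gg 0$.

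Suppose first that $\lambda(p^a-1)\in\mathbb{Z}$, so $\delta=0$ and $\lambda'=\lambda$. Then $\lambda$ itself has denominator coprime to $p$, and writing $\sigma(M,f^\lambda)=\kappa^c f^{\lambda(p^c-1)}M$ via Lemma \ref{NonFpureIdealSimpleDescr} and composing with $\phi^k$ gives $\phi^k\sigma(M,f^\lambda)=\kappa^{ka+c}f^{\lambda(p^{ka+c}-1)}M=\sigma(M,f^\lambda)$ for every $k$. Since the assumption $Gr^\lambda_\sigma M\neq 0$ forces $\sigma(M,f^\lambda)\not\subseteq N$, $\phi^k$ acts nontrivially on the quotient for every $k$, so $\phi$ is not nilpotent.

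Conversely, assume $\lambda(p^a-1)\notin\mathbb{Z}$, so $\delta>0$ and $\lambda'>\lambda$. Using discreteness of $\sigma$ once more, I further shrink $\eps$ so that $0<\eps<\delta/(p^a-1)$, which forces $\lambda+\eps<\lambda'$. Since $\sigma$ is decreasing, $\sigma(M,f^{\lambda'})\subseteq\sigma(M,f^{\lambda+\eps})=N$, and combining with $\phi^k M=\sigma(M,f^{\lambda'})$ for $k\gg 0$ yields $\phi^k\sigma(M,f^\lambda)\subseteq\phi^k M\subseteq N$. Thus $\phi^k$ vanishes on $Gr^\lambda_\sigma M$, proving nilpotence. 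The main obstacle I anticipate is the plateau/parameter bookkeeping in the first paragraph, where one must verify that $a+c$ is large enough to invoke Lemma \ref{NonFpureIdealSimpleDescr} for $\lambda''$; this is not an issue for the second assertion, which reduces cleanly to the identity displayed above.
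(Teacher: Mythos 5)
Your proposal is correct and takes essentially the same route as the paper: the pivotal identity $\phi^k=\kappa^{ka}f^{\lambda'(p^{ka}-1)}$ with $\lambda'(p^a-1)=\lceil\lambda(p^a-1)\rceil\in\mathbb{Z}$ is exactly what the paper encodes via $\delta'=\delta/(p^a-1)$, and both arguments then run through Lemma \ref{NonFpureIdealSimpleDescr} in the same way for both directions of the nilpotence criterion. The only slight imprecision is the assertion that $\lambda''$ lies in the plateau above $\lambda$ -- when $\delta>0$ and $\eps<\delta/(p^a-1)$ one actually has $\lambda''>\lambda+\eps$ -- but that only strengthens the inclusion $\phi N\subseteq N$ you need, so the well-definedness of the quotient Cartier structure is unaffected.
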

\begin{proof}
If $\lambda = 0$, then $Gr^0_\sigma M = \underline{M}/\tau(M, f^{\eps})$ by Proposition \ref{SigmaEqualsTauFregularLocus}. Moreover, $\kappa^a \tau(M, f^{\eps}) = \tau(M, f^{\eps/p^a}) = \tau(M, f^0) = \tau(M, f^\eps)$ by right -continuity of $\tau$ and Lemma \ref{CartierTauDivisionByP}. So $\kappa$ acts on the quotient. Moreover, $\kappa$ acts surjectively on $\underline{M}$. So if the quotient is non-zero (which is the case if and only if $M$ is not $F$-regular), then the action is not nilpotent.

If $\lambda > 0$, then $Gr^\lambda_\sigma M = \tau(M, f^{\lambda - \eps'})/\tau(M, f^{\lambda + \eps -\eps'})$  by Proposition \ref{SigmaEqualsTauFregularLocus}, where $0 < \eps < \eps' \ll 1$. If this  quotient is zero, then the statement of the lemma is trivially true. The quotient is non-zero if and only if $\lambda$ is an $F$-jumping number. In this case, the statement follows from Theorem \ref{TheoNilpotentGr}
\end{proof}

\begin{Theo}
\label{LastTheo}
Let $R$ be regular and essentially of finite type over an $F$-finite field. Let $(M, \kappa)$ be a Cartier module, $f \in R$ an $M$-regular element and assume that $M_f$ is $F$-regular. Let $\lambda \in (0,1] \cap \mathbb{Z}_{(p)}$. If $\frac{\lceil\lambda p^e\rceil -1}{p^e}$ is a zero of the $e$th Bernstein-Sato polynomial (see Definition \ref{def.bernsteinsato}) for some $e \gg 0$ such that $\lambda(p^e -1) \in \mathbb{Z}$, then $Gr_\sigma^\lambda M$ is non-trivial.
\end{Theo}
\begin{proof}
Since $\lambda(p^e-1) = \lambda p^e - \lambda \in \mathbb{Z}$ we have $\lceil \lambda p^e\rceil -1 = \lambda(p^e -1)$. We denote the $p$-adic expansion of $\lambda$ by $\sum_{i\geq1} c_i p^{-i}$. Then our assumption that $\frac{\lceil\lambda p^e\rceil -1}{p^e}$ is a zero of the Bernstein-Sato polynomial means that the generalized $(c_e,\ldots, c_1)$-eigenspace of $N_e$ (as defined just before Theorem \ref{BSPGeneral1}) is non-trivial. Using that ${F^e}^!$ is fully faithful this is equivalent to
\begin{equation}\label{eq.??}\kappa^e f^{c_e + c_{e-1}p +\ldots + c_1 p^{e-1}} M \neq \kappa^e f^{1 + c_e + c_{e-1}p +\ldots + c_1 p^{e-1}} M \end{equation} (cf.\ the proof of Theorem \ref{BSPGeneral1}). By our assumption that $\lambda(p^e-1)$ the sequence $c_i$ is periodic with period length dividing $e$. Hence, (\ref{eq.??}) is equivalent to 
\[\kappa^e f^{c_1 + c_{2}p +\ldots + c_e p^{e-1}} M \neq \kappa^e f^{1 + c_1 + c_{2}p +\ldots + c_e p^{e-1}} M \]
Now the left-hand side of this inequality is equal to $\kappa^e f^{\lambda(p^e -1)} M = \sigma(M, f^\lambda)$ by Corollary \ref{SimpleSigmaDescription} (where we need that $e \gg 0$). Fix $0 < \eps \ll 1$ such that $(\lambda+ \eps)(p^e -1) \in \mathbb{Z}$ (this works for suitable $e \gg 0$). Using Corollary \ref{SimpleSigmaDescription} once more, we then have \[\sigma(M, f^{(\lambda + \eps)}) = \kappa^e f^{(\lambda + \eps)(p^e -1)} M \subseteq \kappa^e f^{1 + \lambda(p^e -1)} M \neq \kappa^e f^{\lambda(p^e -1)} M = \sigma(M, f^\lambda).\] Hence, $Gr_\sigma^\lambda M \neq 0$.
\end{proof}

\begin{Bsp}
Let us illustrate that the other direction of Theorem \ref{LastTheo} does not hold. That is, it may happen that $\sigma(M, f^\lambda) \neq \sigma(M, f^{\lambda + \eps})$ but $\frac{\lceil \lambda p^e \rceil -1}{p^e}$ is no a zero of any $e$th Bernstein-Sato polynomial.

Consider $M = \mathbb{F}_p[x] \cdot x^{-1}$, $f =x^2$ and $\lambda =\frac{1}{2}$ where we assume $p \neq 2$. Then $\sigma(M, f^\lambda) \neq \sigma(M, f^{\lambda + \eps}$). Indeed, Corollary \ref{SimpleSigmaDescription} yields that $\sigma(M, f^{\lambda}) = \kappa^e x^{p^e -1} x^{-1}M = \mathbb{F}_p[x]$ for $e \gg 0$. Similarly $\sigma(M, f^{\lambda + \eps}) = \kappa^e x^{p^e -1} x^{2\eps(p^e-1)} x^{-1} M$ if we choose $0 < \eps \ll 1$ so that $\lambda + \eps$ has denominator not divisible by $p $ and $e \gg 0$ suitable. In particular, we may choose $e$ so large that $2 \eps (p^e -1) \geq 2$, then we see that $\sigma(M, f^{\lambda + \eps}) \subseteq (x)$.

As in the proof of the theorem $\frac{\lceil \lambda p^e \rceil -1}{p^e}$ is a zero of any $e$th Bernstein-Sato polynomial if and only if
\[\kappa^e f^{\lambda(p^e -1)} x^{-1} \mathbb{F}_p[x] \neq \kappa^e f^{\lambda(p^e -1) +1} x^{-1} \mathbb{F}_p[x]\]
Now the left-hand side evaluates to $\kappa^e x^{p^e-1} x^{-1} \mathbb{F}_p[x] = \mathbb{F}_p[x]$ for any $e \geq 1$. The right-hand side similarly evaluates to $\kappa^e x^{p^e -1} \mathbb{F}_p[x] = \mathbb{F}_p[x]$.
\end{Bsp}

\begin{Bem}
\begin{enumerate}[(a)]
\item{If $(M, \kappa)$ is $F$-regular then all non-nilpotent information is recovered. Indeed, for suitable $e \gg 0$ one has $\sigma(M, f^{\lambda+\eps}) = \kappa^e f^{(\lambda + \eps)(p^e -1)} M = \tau(M, f^{\lambda + \eps - \frac{\lambda + \eps}{p^e}})$, and $\kappa^e f^{\lambda(p^e -1)} f M = \tau(M, f^{\lambda + \frac{1 - \lambda}{p^e}})$ e.g.\ using Theorem \ref{TauFRegularDescription} and Corollary \ref{SimpleSigmaDescription}. By right-continuity of $\tau$ these two quantities coincide for $0 < \eps \ll 1$ and $e \gg 0$ suitable.}
\item{Note that in characteristic zero one has that if $\lambda$ is a jumping number of the multiplier ideal filtration then it is a zero of the Bernstein-Sato polynomial. Here the situation is the other way around.}
\end{enumerate}
\end{Bem}
\bibliography{bibliothek.bib}
\bibliographystyle{amsalpha}
\end{document}